\newcommand{\ddate}{}
\date{\ddate}
\newtheorem{dummy}{anything}[section]
\newtheorem{Theorem}[dummy]{Theorem}
\newtheorem{Lemma}[dummy]{Lemma}
\newtheorem{Proposition}[dummy]{Proposition}
\newtheorem{Corollary}[dummy]{Corollary}
\newtheorem{Definition}[dummy]{Definition}
\newtheorem{Definitions}[dummy]{Definitions}
\newtheorem{Examples}[dummy]{Examples}
\newtheorem{Remark}[dummy]{Remark}
\newtheorem{Remarks}[dummy]{Remarks}
\newtheorem{Problem}[dummy]{Problem}
\newtheorem{ccote}[dummy]{}
\newtheorem*{defA}{Definitions A}
\newtheorem*{defB}{Definitions B}
\newtheorem{Convention}[dummy]{Convention}
\newcommand{\bbr}{{\mathbb R}}
\newcommand{\bbp}{{\mathbb P}}
\newcommand{\bbz}{{\mathbb Z}}
\newcommand{\bbe}{{\mathbb E}}
\newcommand{\bbt}{{\mathbb T}}
\newcommand{\bba}{{\mathbb A}}
\newcommand{\bbu}{{\mathbb U}}
\newcommand{\calb}{{\mathcal B}}
\newcommand{\calc}{{\mathcal C}}
\newcommand{\cald}{{\mathcal D}}
\newcommand{\calf}{{\mathcal F}}
\newcommand{\calh}{{\mathcal H}}
\newcommand{\calm}{{\mathcal M}}
\newcommand{\caln}{{\mathcal N}}
\newcommand{\calp}{{\mathcal P}}
\newcommand{\calq}{{\mathcal Q}}
\newcommand{\cals}{{\mathcal S}}
\newcommand{\mancqfd}{\hfill \ensuremath{\Box}}
\newcommand{\pcirc}{\kern .7pt {\scriptstyle \circ} \kern 1pt}
\newcommand{\tpcirc}{\kern .7pt {\scriptstyle \tilde\circ} \kern 1pt}
\newcommand{\mun}{{-1}}
\newcommand{\psetminus}{{\scriptstyle\setminus}}
\renewcommand{\:}{\colon}
\newcommand{\beq}[1]{\begin{equation}\label{#1}}
\newcommand{\eeq}{\end{equation}}
\newcommand{\sk}[1]{\vskip #1 mm}
\newcommand{\nsk}[1]{\vskip #1 mm\noindent}
\newcommand{\eqref}[1]{(\ref{#1})}
\newcommand{\hfl}[2]{\smash{\mathop{\hbox to 1 truecm{\kern %
3pt\rightarrowfill\kern 3pt}}%
\limits^{\scriptstyle#1}_{\scriptstyle#2}}}
\newcommand{\cqfd}{\unskip\kern 6pt\penalty 500%
\raise -2pt\hbox{\vrule\vbox to10pt{\hrule width %
4pt\vfill\hrule}\vrule}\smallskip}
\newcommand{\proref}[1]{Proposition~\ref{#1}}
\newcommand{\remref}[1]{Remark~\ref{#1}}
\newcommand{\lemref}[1]{Lemma~\ref{#1}}
\newcommand{\corref}[1]{Corollary~\ref{#1}}
\newcommand{\thref}[1]{Theorem~\ref{#1}}
\newcommand{\secref}[1]{Section~\ref{#1}}
\newcommand{\dfn}[1]{{\it #1}}
\newcommand{\dia}[1]{\begin{array}{c}{\xymatrix@C-3pt@M+2pt@R-4pt{#1 }}\end{array}}
\newcommand{\comeq}[1]{\hbox{{\footnotesize #1}}}
\newcommand{\fl}[1]{\buildrel{#1}\over{\longrightarrow}}
\newcommand{\wh}{{\rm Wh}}
\newcommand{\rpi}{\bbr P^\infty}
\newcommand{\id}{{\rm id}}
\newcommand{\onto}{\to\kern-6.5pt\to}
\newcommand{\cs}{cohomology silent }
\newcommand{\has}[1]{cohomology $#1$-antisimple} 
\newcommand{\as}[1]{$#1$-antisimple} 
\newcommand{\bd}{{\rm Bd}}
\renewcommand{\bd}{\partial}
\newcommand{\wa}{{\rm Wall}}
\newcommand{\pe}{Poincar\'e\ }
\newcommand{\dcup}{\amalg}
\newcommand{\ecs}[2]{{\rm CS}(#1,#2)}
\newcommand{\cef}[2]{($#1,#2$)-cellfree}
\newcommand{\ccs}[2]{cohomology ($#1,#2$)-silent}
\title{The cell-dispensability obstruction \\ for spaces and manifolds}
\author{Jean-Claude HAUSMANN}
\begin{document}

\maketitle
 
\begin{abstract}
We compare two properties for a CW-space $X$ of finite type: 
(1) being homotopy equivalent to a CW-complex without $j$-cells for $k\leq j\leq \ell$ ({\it \cef{k}{\ell}}) and 
(2) $H^j(X;R)=0$ for any $\bbz\pi_1(X)$-module~$R$ when $k\leq j\leq \ell$ ({\it\ccs{k}{\ell}}). 
Using the technique of Wall's finiteness obstruction, we show that a connected CW-space $X$ of finite type which is \ccs{k}{\ell} determines a \dfn{cell-dispensability obstruction} $w_k(X)\in\tilde K_0(\bbz\pi_1(X))$ which vanishes if and only if $X$ is \cef{k}{\ell} ($k\geq 3$). Any class in $\tilde K_0(\bbz\pi)$ may occur as the cell-dispensability obstruction $w_k(X)$ for a CW-space $X$ with $\pi_1(X)$ identified with $\pi$. Using projective surgery, a similar theory is obtained for manifolds, replacing ``cells'' by ``handles'' (antisimple manifolds). 
\end{abstract}

{\footnotesize \tableofcontents }

\section{Introduction}

In the first part of this paper, we are interested in the following possible properties 
for a connected CW-space $X$.

\begin{defA}\rm
\begin{itemize}
\item[(1)] $X$ is \dfn{\cef{k}{\ell}} if $X$ is homotopy equivalent to a CW-complex 
having finite skeleta and without $j$-cells for $k\leq j\leq\ell$. 
\item[(2)]  $X$ is \dfn{\cs in degree $j$} if $H^j(X;R)=0$ for any $\bbz\pi_1(X)$-module~$R$.
\item[(3)]  $X$ is \dfn{\ccs{k}{\ell}} if $X$ is \cs in degree $j$ for~$k\leq j\leq\ell$.
\end{itemize}
\end{defA}

Note that a \cef{k}{\ell} CW-space $X$ is of finite type (i.e. homotopy equivalent to a complex with finite skeleta). But the definition is stronger than being of finite type {\it and} being homotopy equivalent to a complex without $j$-cells for $k\leq j\leq\ell$. Indeed, the two conditions must be realized simultanously
(see \remref{remcellfree}).

Obviously, a CW-space which is \cef{k}{\ell} is \ccs{k}{\ell}. The converse is known to be true for simply connected CW-spaces of finite type, using a minimal cell decomposition \cite[Proposition~4C.1]{Hatcher} together with the universal coefficient theorem for cohomology. In \secref{GSCSC}, we show that a connected CW-space $X$ of finite type which is \ccs{k}{\ell} determines a \dfn{cell-dispensability obstruction} $w_k(X)\in\tilde K_0(\bbz\pi_1(X))$ which vanishes if and only if $X$ is \cef{k}{\ell} ($k\geq 3$). Any class in $\tilde K_0(\bbz\pi)$ may occur as the cell-dispensability obstruction $w_k(X)$ for a CW-space $X$ with $\pi_1(X)$ identified with $\pi$ (by somehow fixing the ($k-1$)-skeleton).

The definition of $w_k(X)$ is akin to that of the Wall finiteness obstruction, but the two obstruction are different,
since $w_k(X)$ might not vanish for a finite complex $X$.
Among the similarities is a product formula. As a consequence, if $X$ is \ccs{k}{\ell} ($k\geq 3$), then 
$X\times S^1$ is \cef{k+1}{\ell}.

In \secref{GScasm}, we apply the above material to smooth manifolds, using the following
\begin{defB}\rm
Let $M$ be a smooth compact manifold of dimension~$r\geq 2k$.
\begin{itemize}
\item[(1)] $M$ is \dfn{\as{k}} \ if $M$ admits a handle decomposition without handles of index $j$ 
for $k\leq j \leq r-k$.
\item[(2)] $M$ is \dfn{\has{k}} if it is \ccs{k}{r-k}. This definition is also used for a \pe complex
of formal dimension $r$. 
\end{itemize}
\end{defB}

If $M$ is \has{k}, its cell-dispensability obstruction $w_k(M)\in\tilde K_0(\bbz\pi_1(M))$ is defined.
When $M$ is closed of dimension $r\geq 6$, we prove that $w_k(M)=0$ if and only if $M$ is \as{k}. The 
obstruction $w_k(M)$ is $r$-self-dual, i.e $w_k(M)=(-1)^{r+1}w_k(M)^*$ (the involution $*$ on $\tilde K_0(\bbz\pi_(M)$
involves the orientation character $\omega\:\pi_1(M)\to\{\pm 1\}$ of $M$).
It thus defines a class 
$[w_k(M)]$ in the Tate cohomology group  $H^{r+1}(\bbz_2;\tilde K_0(\bbz\pi_(M),\omega))$. We prove that 
$[w_k(M)]=0$ if and only if $M$ is cobordant, in an appropriate sense, to a \as{k} manifold.

A whole section (\S~\ref{Srea}) is devoted to realize an element $\calp\in\tilde K_0(\bbz\pi)$ as the
cell-dispensability obstruction for a \pe space or a closed manifold with fundamental group identified with $\pi$
(via fixing the normal ($k-1$)-type). Using the technique of ends of spaces \cite{PeRa,RanEnds}, we construct for any 
$\calp\in\tilde K_0(\bbz\pi)$ a finitely dominated \pe space $\bbp$ of formal dimension $r\geq 6$, 
which is \has{k} with $w_k(\bbp)=\calp$. Its Wall finiteness obstruction $\wa(\bbp)$ is equal to $\calp + (-1)^{r}\calp^*$. In particular, when $\calp$ is $r$-self-dual,
$\bbp$ is homotopy equivalent to a finite \pe complex. Using then the projective surgery theory of \cite{PeRa},
we prove that $\calp$ may be replaced by a closed $r$-dimensional manifold when $[w_k(M)]$
belongs to the kernel of the homomorphism  $\delta_R\:H^{r+1}(\bbz_2;\tilde K_0(\bbz\pi,\omega))\to L_{r}^h(\pi,\omega)$
occurring in the Ranicki exact sequence (see~\eqref{ranicki}). 
Examples are discussed in \S~\ref{Sexpl}. Finally, \secref{SrefASManif} contains some results about antisimple manifolds, shedding some new light on previous works of the author \cite{HauMW,HauTWI,Hau2}.

The terminology {\it antisimple} was introduced in \cite{HauNAS} for manifolds related to high dimensional knots. 
Antisimple manifolds have been studied in \cite{HauMW}, with applications in \cite{HauTWI,Hau2}. 
In \cite{DFW,WeinbHigh},  {\it cohomology antisimple manifolds} are called  
{\it antisimple manifold}.

\sk{3}\noindent{\bf Acknowledgments: }\rm The author is grateful to Ian Hambleton and Shmuel Weinberger for comments and suggestions.

\section{Cohomology silent complexes}\label{GSCSC}

\subsection{Preliminaries}\label{Snot}

\begin{ccote} Cell complexes. \rm\ 
A CW-complex is also called a \dfn{cell complex}
or just a \dfn{complex}. We denote by $X^k$ the $k$-skeleton of a complex $X$. 
The universal covering of $X$ is denoted by $\tilde X$. Maps between topological spaces are always 
supposed to be continuous.

When a cell complex $X$ is connected, we usually assume that it has only one $0$-cell: 
$X^0=\{x_0\}$ and we denote by $\pi(X)$ the fundamental group $\pi_1(X,x_0)$, often abbreviated by $\pi$
when the context is clear. The group $\pi$ acts on the left on $\tilde X$ by deck transformations.

A \dfn{CW-space} is a topological space $X$ which is homotopy equivalent to a CW-complex.
Such a CW-space is 
\begin{itemize}
 \item of \dfn{finite $k$-type} if $X$ is homotopy equivalent to a CW-complex with finite $k$-skeleton.
 \item of \dfn{finite type} if $X$ is of finite $k$-type for all $k\geq 0$. 
 \item \dfn{finitely dominated} if there exists a finite CW-complex $A$ and maps $f\:A\to X$ and $s\:X\to A$
 such that $f\pcirc s$ is homotopic to ${\rm id}_A$ ($f$ is called a \dfn{(homotopy) domination}).  
\end{itemize} 
\end{ccote}

\begin{ccote}\label{connectivity}  $r$-connectivity and weak $r$-connectivity. \rm\ 
A map $f\:Y\to X$ between connected CW-spaces is called \dfn{$r$-connected} if $\pi_j(f)=0$ for $j\leq r$. 
When $r\geq 2$, this is equivalent to $f$ inducing an isomorphism on fundamental groups and $H_j(\tilde f)=0$
for $j\leq r$, where $\tilde f\:\tilde Y\to\tilde X$ is the induced map on the universal covers. 
A pair $(X,Y)$ of connected CW-spaces is called \dfn{$r$-connected} if the inclusion map $Y\hookrightarrow X$
is $r$-connected.

A map $f\:Y\to X$ is called \dfn{weakly $r$-connected} if $f_*\: \pi_j(Y)\to\pi_j(X)$ 
is an isomorphism for $j\leq r-1$. When this is the case, one can attach
trivial $r$-cells to $Y$, getting a cell complex $Y'$, so that $f$ extends to $f'\:Y'\to X$ which is $r$-connected.
Again, a pair $(X,Y)$ of connected CW-spaces is called \dfn{weakly $r$-connected} if the inclusion map $Y\hookrightarrow X$
is weakly $r$-connected. Note that ``weakly $r$-connected'' implies ``($r-1$)-connected''.

Well known by specialists, the following lemma will be useful.

\begin{Lemma}\label{r-connSkel}
Let $f\:Y\to X$ be a map between connected CW-complexes. Suppose that $f$ is $r$-connected.
Then, there exists a CW-complex $Y'$ obtained by attaching to $Y$ cells of dimension $>r$, such that 
$f$ extends to a homotopy equi\-valence $f'\:Y'\to X$. If $X$ and $Y$ have finite skeleta, then $Y'$ may be chosen to have finite skeleta.
\end{Lemma}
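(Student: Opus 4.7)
The plan is to factor $f$ through a cellular model built by attaching cells of dimension $>r$ to $Y$, and then invoke Whitehead's theorem. First, I would replace $f$ by the cofibration $Y\hookrightarrow M_f$ via the mapping cylinder; since $M_f\simeq X$, the pair $(M_f,Y)$ is $r$-connected. I would then construct inductively a tower $Y=Y_r\subseteq Y_{r+1}\subseteq\cdots$ together with extensions $\iota_n\:Y_n\hookrightarrow M_f$ of the inclusion, where $Y_n$ is obtained from $Y_{n-1}$ by attaching $n$-cells whose attaching maps $S^{n-1}\to Y_{n-1}$ come from a generating set of $\pi_n(M_f,Y_{n-1})$ represented by pairs $(D^n,S^{n-1})\to (M_f,Y_{n-1})$; the maps on the disks provide $\iota_n$. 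By construction, $(M_f,Y_n)$ becomes $n$-connected.

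Taking $Y'=\bigcup_n Y_n$ yields a CW-complex built from $Y$ by attaching cells of dimension $>r$, and the induced map $\iota\:Y'\to M_f$ is a weak equivalence between CW-complexes, hence a homotopy equivalence by Whitehead's theorem. Composing with the deformation retract $M_f\to X$ produces the desired extension $f'\:Y'\to X$.

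For the finite skeleta statement, I may assume $r\geq 2$ (lower cases being handled by first attaching a few $1$- and $2$-cells to match $\pi_1$, which is the standard reduction). Then for each $n\geq r+1$, the map $\pi_1(Y_{n-1})\to\pi_1(M_f)$ is an isomorphism, so the universal cover $\widetilde{M_f}$ restricts over $Y_{n-1}$ to its universal cover and the pair $(\widetilde{M_f},\widetilde{Y_{n-1}})$ is $(n-1)$-connected. The relative Hurewicz theorem gives an isomorphism of $\bbz\pi_1(X)$-modules
\[
\pi_n(M_f,Y_{n-1}) \;\cong\; H_n(\widetilde{M_f},\widetilde{Y_{n-1}}).
\]
If $X$ and $Y$ have finite skeleta, then inductively so does each $Y_{n-1}$, and the cellular chain complex $C_*(\widetilde{M_f},\widetilde{Y_{n-1}})$ is finitely generated over $\bbz\pi_1(X)$ in each degree; consequently the relative homology is a finitely generated $\bbz\pi_1(X)$-module, so finitely many $n$-cells suffice at stage $n$, and $Y_n$ retains finite skeleta.

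The only real technical point is the appeal to relative Hurewicz on the equivariant chain complex to guarantee that only finitely many equivariant cells need to be attached at each stage; the rest is a standard cell-attaching induction and an invocation of Whitehead's theorem.
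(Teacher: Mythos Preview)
Your argument follows essentially the same route as the paper's: replace $f$ by an inclusion, attach cells indexed by generators of the relative homotopy group to raise connectivity by one, and iterate. The paper phrases the inductive step without the tower notation and, for the finite-skeleta clause, reduces via the triple $(X,Y,Y^r)$ to the finite generation of $\pi_{r+1}(X,Y^r)$, which it obtains by citing Wall's Theorem~A rather than by a direct chain-level computation.

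There is, however, one step in your argument that is not justified as written. You assert that because $C_*(\widetilde{M_f},\widetilde{Y_{n-1}})$ is finitely generated over $\bbz\pi_1(X)$ in each degree, the homology $H_n$ is finitely generated. Over a non-Noetherian ring such as $\bbz\pi$, finite generation of chain groups does \emph{not} by itself imply finite generation of homology: kernels of maps between finitely generated modules need not be finitely generated. What saves you here is the additional information that $H_j=0$ for $j<n$: inductively, each short exact sequence $0\to Z_j\to C_j\to B_{j-1}\to 0$ splits (since $B_{j-1}=Z_{j-1}$ is projective by the previous step), so $Z_j$ is stably free and in particular finitely generated; at stage $n$ one obtains $Z_n$ finitely generated and hence $H_n=Z_n/B_n$ finitely generated. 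This Schanuel-type induction is exactly the content of the result of Wall that the paper invokes, and you should make it explicit rather than writing ``consequently''.
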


\begin{proof} 
It suffices to prove that there exists a CW-complex $\bar Y$ obtained by attaching to $Y$ cells of dimension $r+1$, 
finite in number if $X$ and $Y$ have finite skeleta,
such that $f$ extends to an $(r+1)$-connected $\bar f\:\bar Y\to X$. The required couple $(Y',f')$ will the 
be obtained by iteration of this process. 

One can replace $f$ by an injection of a subcomplex.
It is classical that the couple $(\bar Y,\bar f)$ may be obtained by attaching ($r+1)$-cells using a set of generators of $\pi_{r+1}(X,Y)$ (see \cite[p.~59, comments preceeding Lemma~1.2 and its proof]{WallFin}).
By the homotopy sequence of the triple $(X,Y,Y^r)$, the $\bbz\pi_1(X)$-module $\pi_{r+1}(X,Y)$ is a quotient of
$\pi_{r+1}(X,Y^r)$.The latter is a finitely generated $\bbz\pi_1(X)$-module when $X$ and $Y$ are complexes with finite skeleta (see \cite[Theorem~A]{WallFin}).
\end{proof}
\end{ccote}

\begin{ccote}\label{reference}  Reference maps. \rm\ 
Let $Y$ be a connected CW-complex of dimension $r-1$. An \dfn{$Y$-reference map} for a CW-space $X$ is a map 
$g\:X\to Y$ which is $r$-connected. The space $X$ together with the map $g$ (or the pair $(X,g)$) is called an 
\dfn{$Y$-referred space}. 

\begin{Lemma}\label{Lrefretr}
Let $g\:X\to Y$ be an $Y$-reference map. Then, there exists a CW-complex $X'$ containing $Y$ as a subcomplex 
and a homotopy equivalence $h\:X'\to X$ such that the $Y$-reference map $g\pcirc h$ is a retraction of $X'$ onto $Y$.
The pair $(X',Y)$ is weakly $r$-connected.
\end{Lemma}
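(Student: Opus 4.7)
The plan is to take $X'$ to be the mapping cylinder of a homotopy section of $g$.

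To produce the homotopy section, I first replace $g$ by its Hurewicz fibration model $p\:\hat X\to Y$ via the mapping path space, and let $\pi\:\hat X\to X$ denote the canonical homotopy equivalence (satisfying $g\pcirc\pi\simeq p$). Since $g$ is $r$-connected, so is $p$, and hence the fiber $F$ of $p$ is $(r-1)$-connected. Because $\dim Y=r-1$, the classical obstructions to extending a section of $p$ from the $(n-1)$-skeleton of $Y$ to the $n$-skeleton lie in $H^n(Y;\pi_{n-1}(F))$ for $n\leq r-1$ and all vanish. This yields a strict section $\sigma\:Y\to\hat X$ of $p$. Setting $s=\pi\pcirc\sigma\:Y\to X$ and cellularly approximating gives a cellular map with $g\pcirc s\simeq p\pcirc\sigma=\id_Y$.

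I then define $X'$ to be the mapping cylinder $M_s=(Y\times I)\sqcup X/((y,1)\sim s(y))$. Since $s$ is cellular, $X'$ is naturally a CW-complex containing $Y=Y\times\{0\}$ as a subcomplex. The canonical collapse $h_0\:X'\to X$ (identity on $X$, sending $(y,t)\mapsto s(y)$) is a deformation retraction and hence a homotopy equivalence, but $g\pcirc h_0|_Y=g\pcirc s$ is only \emph{homotopic} to $\id_Y$. To promote this to a strict retraction, I use the homotopy extension property of the cofibration $Y\hookrightarrow X'$ together with the homotopy lifting property of the Hurewicz fibration $p$: extend a chosen homotopy $g\pcirc s\simeq\id_Y$ on $Y$ to a homotopy of $g\pcirc h_0$ on all of $X'$, lift it back to $\hat X$ via $p$, and push forward to $X$ via $\pi$. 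This perturbs $h_0$ to a new homotopy equivalence $h\:X'\to X$ such that $g\pcirc h\:X'\to Y$ restricts to $\id_Y$ on $Y$, i.e., is a genuine retraction.

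Finally, the weak $r$-connectedness of $(X',Y)$ reduces to checking that the inclusion $Y\hookrightarrow X'$ is $(r-1)$-connected. This inclusion composed with $h$ is (homotopic to) $s\:Y\to X$, and from $g_*\pcirc s_*=\id$ together with $g_*$ being an isomorphism on $\pi_j$ for $j\leq r-1$, it follows that $s_*$ is an isomorphism in that range. The main obstacle will be the upgrade from ``homotopy retraction'' to ``strict retraction'' in the middle paragraph, where the HEP along $(X',Y)$ must be combined with the homotopy lifting property of the fibration replacement of $g$; the necessary lifting obstructions vanish thanks to the $(r-1)$-connectedness of $F$ and the dimension $r-1$ of $Y$.
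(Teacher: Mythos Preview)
Your approach is essentially the paper's: replace $g$ by a Hurewicz fibration, use the dimension bound $\dim Y=r-1$ and the $(r-1)$-connectedness of the fibre to get a section, realise the resulting homotopy section $s\:Y\to X$ as a CW-inclusion via the mapping cylinder, and then invoke the homotopy extension property of $(X',Y)$ to homotope $g\pcirc h_0$ to a genuine retraction $g'\:X'\to Y$. Up to and including the HEP step your argument and the paper's coincide, and the weak $r$-connectedness of $(X',Y)$ is handled identically.

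The extra paragraph, where you try to upgrade to a \emph{strict} equality $g\pcirc h|_Y=\id_Y$ by lifting through $p$ and pushing forward by $\pi$, does not work as written. With the standard mapping-path model one has $p\pcirc\iota=g$ but only $g\pcirc\pi\simeq p$, not $g\pcirc\pi=p$. So if $\hat H\:X'\times I\to\hat X$ is your lift with $p\pcirc\hat H_1|_Y=\id_Y$, then $g\pcirc h|_Y=g\pcirc\pi\pcirc\hat H_1|_Y$ is merely \emph{homotopic} to $\id_Y$, not equal to it. In fact no argument can produce $g\pcirc h|_Y=\id_Y$ in general: nothing in the hypotheses forces $g$ to be surjective (take e.g.\ $X=\{pt\}\hookrightarrow Y=D^{r-1}$), and then $g$ has no strict section at all.

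This is not a defect relative to the paper. The paper's own proof also stops at the HEP step and produces a retraction $g'\simeq g\pcirc h_0$; the phrase ``the $Y$-reference map $g\pcirc h$ is a retraction'' is to be read as ``the reference map on $X'$ (well-defined up to homotopy) admits a representative which is a retraction''. So your proof is correct once you drop the unnecessary HLP paragraph, and it then matches the paper's argument.
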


In short, a reference map may be realized up to homotopy equivalence by a {\it reference retraction}. Note that the pair $(X',Y)$ is not $r$-connected  in general (e.g. $X'=X=Y\vee S^r$). 

\begin{proof}
The reference map $g$ may be, up to homotopy equivalence, replaced by a Serre fibration $\hat g\:\hat X\to Y$, 
whose fiber is then ($r-1$)-connected. As $Y$ is of dimension $r-1$, there is no obstruction to get a section 
of $\hat g$, which provides a homotopy section $\gamma\:Y\to X$ for $g$. 
This map $\gamma$ may be, up to homotopy equivalence, 
replaced by a CW-inclusion $J\:Y \hookrightarrow X'$ and the map $g$ provides a 
reference map $g_1'\:X'\to Y$, such that
$g_1'\pcirc J$ is homotopic to ${\rm id}_Y$. By the homotopy extension property, there is a homotopy from $g_1'$ to 
$g'\:X'\to Y$ such that $g'\pcirc\gamma={\rm id}_Y$, namely $g$ is a retraction. Since $g$ is $r$-connected,
the pair $(X',Y)$ is weakly $r$-connected.
\end{proof}
\end{ccote}

\begin{ccote}\label{cohmod} Cohomology with local coefficients. \rm 
Let $X$ be a connected cell complex with fundamental group $\pi$. We consider the cellular chain complex 
$(C_*(\tilde X),\partial)$, with $C_k(\tilde X) = H_k(\tilde X^k,\tilde X^{k-1})$. The latter is a
is a free (left) $\bbz\pi$-module 
with basis in bijection with the $k$-cells of $X$ (by choosing, for each $k$-cell $e$, 
an orientation and a lifting $\tilde e$ in $\tilde X$ of $e$). The \dfn{cycles} $Z_k(\tilde X)$ and 
the \dfn{boundaries} $B_{k-1}(\tilde X)$ are, as usual, defined as the $\bbz\pi$-modules
\begin{eqnarray*}
Z_k(\tilde X) &=& \ker\big(\partial \: C_k(\tilde X)\to C_{k-1}(\tilde X)\big) \\
B_{k-1}(\tilde X) &=& {\rm Image}\big(\partial \: C_k(\tilde X)\to C_{k-1}(\tilde X)\big) \, .
\end{eqnarray*}

Modules of $\bbz\pi$ are {\it left} modules, unless mention of the contrary.
If $R$ is a such a $\bbz\pi$-module,
the \dfn{cohomology} $H^*(X;R)$ is the homology of the cochain complex $C^*(\tilde X;R)=\hom(C_*(\tilde X),R)$, where the coboundary
$\delta$ is defined by 
$$
\delta(\alpha) = (-1)^k \alpha\pcirc\partial \quad  (\alpha\in C^k(\tilde X))  \, .
$$

Homology with coefficient in a $\bbz\pi$-module $R$ will also be used, but only in the framework of manifolds or \pe spaces (see \ref{holoc}). 
\end{ccote}

\subsection{The cell-dispensability obstruction}\label{SCSC}

Recall from the introduction (Definition~A) that a 
connected CW-space $X$ is \dfn{\cs in degree $k$} if $H^k(X;R)=0$ for any $\bbz\pi_1(X)$-module $R$.

\begin{Lemma}\label{LW1}
Let $X$ be a connected CW-complex and let $k\geq 1$ be an integer.
The following conditions are equivalent.
\begin{itemize}
\item[(a)] $X$ is \cs in degree $k$.
\item[(b)] The following two conditions hold true:
   \begin{itemize}
   \item[(b1)]  the inclusion $B_{k-1}(\tilde X)\hookrightarrow C_{k-1}(\tilde X)$ admits a
                retraction of $\bbz\pi(X)$-modules (in consequence $B_{k-1}(\tilde X)$ is projective), and
   \item[(b2)] $H_k(\tilde X)=0$.
   \end{itemize}
\end{itemize}
\end{Lemma}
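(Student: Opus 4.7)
The plan is to work directly with the cellular cochain complex $C^*(\tilde X; R) = \hom_{\bbz\pi}(C_*(\tilde X), R)$, using throughout the two short exact sequences of $\bbz\pi$-modules $0 \to Z_k \to C_k \to B_{k-1} \to 0$ (surjection being $\partial_k$) and $0 \to B_k \to Z_k \to H_k(\tilde X) \to 0$. The equivalence is split into three elementary implications.

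For (b) $\Rightarrow$ (a), let $\phi \in \hom_{\bbz\pi}(C_k(\tilde X), R)$ be a cocycle, so $\phi \pcirc \partial_{k+1}=0$ and hence $\phi|_{B_k}=0$. By (b2) one has $B_k = Z_k$, so $\phi$ vanishes on $\ker\partial_k$ and factors uniquely as $\bar\phi \pcirc \partial_k$ for some $\bbz\pi$-map $\bar\phi \colon B_{k-1} \to R$. The retraction supplied by (b1) makes $B_{k-1}$ a direct summand of $C_{k-1}$, so $\bar\phi$ extends to a $\bbz\pi$-map $\tilde\phi \colon C_{k-1} \to R$. Then $\phi = \tilde\phi \pcirc \partial_k = \pm \delta\tilde\phi$, exhibiting $\phi$ as a coboundary.

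For (a) $\Rightarrow$ (b1), the key step is a clever choice of coefficients: take $R = B_{k-1}(\tilde X)$ and view $\partial_k$ as an element of $\hom_{\bbz\pi}(C_k, B_{k-1})$ (the surjection onto its image). It is a cocycle since $\partial_k \pcirc \partial_{k+1}=0$, so by (a) it is a coboundary: there exists $f \colon C_{k-1} \to B_{k-1}$ with $\partial_k = \pm f \pcirc \partial_k$. Every $b \in B_{k-1}$ has the form $b = \partial_k c$, giving $f(b) = \pm b$. Thus $\pm f$ restricts to the identity on $B_{k-1}$, yielding the required retraction; in particular $B_{k-1}$ is a summand of the free module $C_{k-1}$, hence projective.

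Once (b1) is in hand, $0 \to Z_k \to C_k \to B_{k-1} \to 0$ splits ($B_{k-1}$ being projective), producing $C_k \cong Z_k \oplus B_{k-1}$. A direct inspection of cocycles and coboundaries with this splitting identifies $H^k(X;R)$ with $\hom_{\bbz\pi}(H_k(\tilde X), R)$: cocycles decompose as a pair consisting of a map $Z_k \to R$ vanishing on $B_k$ (equivalently a map $H_k(\tilde X) \to R$) and an arbitrary map $B_{k-1} \to R$, while the coboundaries, thanks to (b1) allowing extension from $B_{k-1}$ to $C_{k-1}$, are exactly the second summand. Under (a) this common group vanishes for every $R$; specialising to $R = H_k(\tilde X)$, the vanishing of $\hom_{\bbz\pi}(H_k(\tilde X), H_k(\tilde X))$ forces $\mathrm{id}_{H_k(\tilde X)} = 0$, whence $H_k(\tilde X)=0$, i.e.\ (b2). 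The only real subtlety is the order of the implications, since one must extract (b1) first so that the splittings needed to obtain (b2) become available; beyond this bookkeeping and the signs from $\delta = \pm\partial^*$, no homological machinery is required past the splitting lemma and the fact that summands of free modules are projective.
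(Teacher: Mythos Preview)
Your proof is correct and follows essentially the same route as the paper's: the same three implications in the same order, with the same coefficient choices $R=B_{k-1}$ for (b1) and $R=H_k(\tilde X)$ for (b2). The only cosmetic difference is in (a) $\Rightarrow$ (b2): you first identify $H^k(X;R)\cong\hom_{\bbz\pi}(H_k(\tilde X),R)$ and then kill the identity, whereas the paper picks the single cocycle $\alpha\colon C_k\to H_k(\tilde X)$ extending the quotient map and shows directly that it must vanish on $Z_k$---these are the same argument unpacked differently.
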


\begin{proof}
We set $C_j=C_j(\tilde X)$, $Z_j=Z_j(\tilde X)$, $B_j=B_j(\tilde X)$ and $\pi=\pi_1(X)$. The proof proceeds in three steps.

\sk{1}\noindent{\it Step 1: (a) $\Rightarrow$ (b1).} 
The boundary homomorphism $\partial\:C_k\to B_{k-1}$ may be seen as an element of 
$Z^k(X;B_{k-1}(\tilde X))$. If (a) holds true, there is $\alpha\in\hom(C_{k-1},B_{k-1})$ such that
$\partial=\delta(\alpha)$, i.e.
$\partial(u)=\alpha\pcirc\partial(u)$ for all $u\in C_k$. Thus, $\alpha$ is a retraction of $C_{k-1}$ onto $B_{k-1}$.

\sk{1}\noindent{\it Step 2: (a) $\Rightarrow$ (b2).} By Step 1 already proven, $B_{k-1}$ is projective and thus 
the exact sequence $0\to Z_k\to C_k\to B_{k-1}\to 0$ splits, implying that $Z_k$ is a direct summand of $C_k$.
Therefore, the surjective homomorphism $Z_k\to H_k(\tilde X)$ extends to a homomorphism $\alpha\:C_k\to H_k(\tilde X)$,
giving an element of $C^k(X;H_k(\tilde X))$ 
satisfying $\alpha\pcirc \partial=0$, i.e. $\alpha\in Z^k(X;H_k(\tilde X))$.
By (a), $\alpha = \bar\alpha\pcirc\partial$ for some homomorphism $\bar\alpha\:C_{k-1}\to H_k(\tilde X)$. 
The homomorphism $\bar\alpha$ is surjective since so is $\alpha$. But $\partial(Z_k)=0$, which implies that $H_k(\tilde X)=0$.

\sk{1}\noindent{\it Step 3: (b) implies (a).} Let $b\in H^k(X;R)$, represented by a homomorphism
$\beta\: C_k(\tilde X)\to R$ which is a cocycle, i.e. $\beta_{|B_k}=0$. But $B_k=Z_k$ since $H_k(\tilde X)=0$ by (b2).
Since $k\geq 1$, one has $\beta = \bar\beta\pcirc\partial$ for some morphism of $\bbz\pi$-module
$\bar\beta\:B_{k-1}\to R$. By (b1), $\bar\beta$ extends to
a an element $\hat\beta\in C^{k-1}(X;R)$ and thus $\beta = \delta(\hat\beta)$, which proves that $b=0$.
Therefore, $H^k(X,R)=0$ for any $\bbz\pi_1(X)$-module $R$, which proves (a).
\end{proof}

We now prepare the definition of the obstruction $w_k(X)$ associated to a \cs cell complex in degree $k$.

\begin{Lemma}\label{LW1b}
Let $X$ be a connected CW-complex which is \cs in some degree $k\geq 3$. Let $\varphi\:K\to X$ be a $(k-1)$-conneted map,
where $K$ is a $(k-1)$-dimensional CW-complex. Then, the $\bbz\pi$-module $\pi_k(\varphi)$ is projective.
\end{Lemma}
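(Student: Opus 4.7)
The plan is to enlarge $K$ to a homotopy-equivalent model of $X$ and then read off projectivity from a cellular chain computation. First I would invoke \lemref{r-connSkel}: since $\varphi$ is $(k-1)$-connected, one attaches cells of dimension $\geq k$ to $K$ so as to obtain a CW-complex $K'\supset K$ together with a homotopy equivalence $\varphi'\:K'\to X$ extending $\varphi$. Because only cells of dimension $\geq k$ are attached, the pair $(K',K)$ is automatically $(k-1)$-connected, and the inclusion $K\hookrightarrow K'$ induces an isomorphism on $\pi_1$, both being identified with~$\pi$.

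Next, since $k-1\geq 2$, the relative Hurewicz theorem applies to the (simply connected, $(k-1)$-connected) pair of universal covers $(\tilde{K'},\tilde K)$, giving a $\bbz\pi$-equivariant isomorphism
$$\pi_k(\varphi)\;\cong\;\pi_k(K',K)\;\cong\;H_k(\tilde{K'},\tilde K).$$
Since $K$ is $(k-1)$-dimensional and thus contains the entire $(k-1)$-skeleton of $K'$, the relative cellular chain complex satisfies $C_j(\tilde{K'},\tilde K)=0$ for $j\leq k-1$ and $C_j(\tilde{K'},\tilde K)=C_j(\tilde{K'})$ for $j\geq k$. This immediately identifies
$$H_k(\tilde{K'},\tilde K)\;=\;C_k(\tilde{K'})\big/B_k(\tilde{K'}).$$

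To conclude, I would apply \lemref{LW1} to $K'$, which is cohomology silent in degree~$k$ by homotopy invariance of that property. Condition~(b2) gives $H_k(\tilde{K'})=0$, so $B_k(\tilde{K'})=Z_k(\tilde{K'})$; condition~(b1) gives that $B_{k-1}(\tilde{K'})$ is projective. The boundary operator $\partial$ then induces an isomorphism $C_k(\tilde{K'})/Z_k(\tilde{K'})\cong B_{k-1}(\tilde{K'})$, showing $\pi_k(\varphi)\cong B_{k-1}(\tilde{K'})$ is projective. The only genuinely delicate point will be the $\bbz\pi$-equivariance of the relative Hurewicz isomorphism, which is precisely where the hypothesis $k\geq 3$ enters: it ensures $\pi_2(K',K)=0$, so that $\tilde K\subset\tilde{K'}$ is a simply connected pair and the classical Hurewicz theorem applies on the universal covers.
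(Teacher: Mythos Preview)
Your proof is correct and follows essentially the same route as the paper: both invoke \lemref{r-connSkel} to replace $X$ by a complex $K'$ having $K$ as its $(k-1)$-skeleton, apply relative Hurewicz on the universal covers to identify $\pi_k(\varphi)\cong H_k(\tilde{K'},\tilde K)$, and then identify this group with $B_{k-1}(\tilde{K'})$, which is projective by \lemref{LW1}. Your final identification via $C_k/Z_k\cong B_{k-1}$ is slightly more direct than the paper's long-exact-sequence argument, but the content is identical.
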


\begin{proof}
Up to homotopy equivalence, we may suppose that $\varphi$ is the inclusion of the $(k-1)$-skeleton $K=X^{k-1}\to X$ 
(see \lemref{r-connSkel}). 
Since $k\geq 3$, the map $\varphi$ induces an isomorphism on the fundamental groups and is covered by a
map $\tilde\varphi\:\tilde K \to \tilde X$ on the universal coverings. By the relative Hurewicz isomorphism theorem, 
one has
$$
\pi_k(\varphi) \approx H_k(\tilde\varphi) \approx H_k(\tilde X,\tilde X^{k-1}) \, .
$$
We now claim that there is an isomorphism of $\bbz\pi$-modules between $B_{k-1}(\tilde X)$ and 
$H_k(\tilde X,\tilde X^{k-1})$. \lemref{LW1b} will then follow from \lemref{LW1}.

To establish the claim, recall that the boundary homomorphism $\partial\: C_k(\tilde X)\to C_{k-1}(\tilde X)$ 
is the composition
$$
\partial : H_k(\tilde X^{k},\tilde X^{k-1}) \to H_{k-1}(\tilde X^{k-1}) \to H_{k-1}(\tilde X^{k-1},\tilde X^{k-2})  \, .
$$
The right arrow is injective since $H_{k-1}(\tilde X^{k-2})=0$. Hence, $B_{k-1}(\tilde X)$ may be seen as a submodule of 
$H_{k-1}(\tilde X^{k-1})$:
\small
$$
B_{k-1}(\tilde X) \approx {\rm Im\,}\big(H_k(\tilde X^{k},\tilde X^{k-1})\to H_{k-1}(\tilde X^{k-1})\big) = 
\ker\big(H_{k-1}(\tilde X^{k-1})\to H_{k-1}(\tilde X^{k})\big) \, .
$$
\rm
As $H_{k-1}(\tilde X^{k})\approx H_{k-1}(\tilde X)$, one has 
$$
B_{k-1}(\tilde X) \approx
\ker\big(H_{k-1}(\tilde X^{k-1})\to H_{k-1}(\tilde X)\big) =  
{\rm Im\,}\big(H_k(\tilde X,\tilde X^{k-1}) \to H_{k-1}(\tilde X^{k-1})\big) \, .
$$
But
$$
{\rm Im\,}\big(H_k(\tilde X,\tilde X^{k-1}) \to H_{k-1}(\tilde X^{k-1})\big) \approx H_k(\tilde X,\tilde X^{k-1})
$$
since $H_k(\tilde X)=0$ by \lemref{LW1}.
\end{proof}

Let $X$ be a connected CW-complex which is \cs in some degree $k\geq 3$. 
Let $\varphi\:K^{k-1}\to X$ be a $(k-1)$-connected map. By \lemref{LW1b}, $\pi_k(\varphi)$ is a projective 
$\bbz\pi$-module. Suppose that $K$ is a finite complex and that $X$ is of finite type. By the proof of \lemref{LW1b},
the $\bbz\pi$-module $\pi_k(\varphi) \approx H_k(\tilde\varphi)$ is finitely generated and therefore defines 
a class 
\beq{defwk}
w_k(X) = (-1)^k\,[\pi_k(\varphi)] \in \tilde K_0(\bbz\pi) \, ,
\eeq
where $K_0(\bbz\pi)$ denotes the group of finitely generated projective $\bbz\pi$-modules modulo the stably free ones.
That the map $\varphi$ does not appear in the notation $w_k(X)$ makes sense in view of the following lemma.

\begin{Lemma}\label{WL2b}
Let $X$ be a connected CW-complex which is of finite type and \cs in degree $k\geq 3$. 
Let $\varphi_i\:K_i^{k-1}\to X$ ($i=1,2$) be two ($k-1$)-connected maps, where $K_i$ are finite complexes. Then 
$[\pi_k(\varphi_1)] = [\pi_k(\varphi_2)]$ in $\tilde K_0(\bbz\pi)$.
\end{Lemma}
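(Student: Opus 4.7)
The plan is to construct a common refinement: a finite $(k-1)$-dimensional CW-complex $K$ containing both $K_1$ and $K_2$ as subcomplexes, equipped with a $(k-1)$-connected map $\varphi\colon K\to X$ whose restriction to $K_i$ is $\varphi_i$ (up to homotopy). Once $K$ is in hand, the equality $[\pi_k(\varphi_i)]=[\pi_k(\varphi)]$ in $\tilde K_0(\bbz\pi)$ will follow from a long exact sequence argument.

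To build $K$, I would first arrange that $\varphi_1$ and $\varphi_2$ share a common basepoint image and form the wedge $K_1\vee K_2$ with $\varphi_1\vee\varphi_2$. On $\pi_1$, this induces the multiplication $\pi\ast\pi\to\pi$, whose kernel is normally generated by finitely many elements $a(g_i)b(g_i)^{-1}$, one for each generator $g_i$ of $\pi$, where $a,b$ denote the two inclusions. I attach finitely many $2$-cells along these to obtain a $\pi_1$-isomorphism. Then, for $j=2,\dots,k-2$ in succession, I attach finitely many $(j+1)$-cells to promote a $j$-connected map to a $(j+1)$-connected one; the finite generation of the relevant $\bbz\pi$-module of obstructions is guaranteed as in the proof of \lemref{r-connSkel}. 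Surjectivity on $\pi_{k-1}$ is automatic at the end, since $K_1\subset K$ already maps $(k-1)$-connectedly to $X$ via $\varphi_1$. The resulting $K$ is a finite $(k-1)$-dimensional CW-complex and the inclusions $K_1,K_2\hookrightarrow K$ are honest subcomplex inclusions.

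Let $\iota\colon K_1\hookrightarrow K$ denote the inclusion. From $\varphi_1=\varphi\pcirc\iota$, both of which are $(k-1)$-connected, the long exact sequence of homotopy groups for $\iota$ shows that $\iota$ is a $\pi_1$-isomorphism and that the pair $(K,K_1)$ is $(k-2)$-connected. Passing to universal covers, relative Hurewicz gives $H_j(\tilde K,\tilde K_1)=0$ for $j\leq k-2$, while $C_j(\tilde K,\tilde K_1)=0$ for $j\geq k$ because $K$ is $(k-1)$-dimensional. Hence $C_*(\tilde K,\tilde K_1)$ is a finite chain complex of finitely generated free $\bbz\pi$-modules whose only possibly nonzero homology sits in degree $k-1$; peeling off the resulting exact sequence degree by degree, each short exact sequence splitting by freeness of the $C_j$, shows that $H_{k-1}(\tilde K,\tilde K_1)$ is stably free.

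The long exact sequence of the triple $(\tilde X,\tilde K,\tilde K_1)$ then reduces, using $H_{k-1}(\tilde X,\tilde K_1)=\pi_{k-1}(\varphi_1)=0$ and $H_k(\tilde K,\tilde K_1)=0$, to
\[
0\longrightarrow H_k(\tilde X,\tilde K_1)\longrightarrow H_k(\tilde X,\tilde K)\longrightarrow H_{k-1}(\tilde K,\tilde K_1)\longrightarrow 0.
\]
Identifying $H_k(\tilde X,\tilde K_i)$ with $\pi_k(\varphi_i)$ by relative Hurewicz (as in the proof of \lemref{LW1b}), this becomes a short exact sequence of projective $\bbz\pi$-modules with stably free quotient, giving $[\pi_k(\varphi)]=[\pi_k(\varphi_1)]$ in $\tilde K_0(\bbz\pi)$. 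The identical argument with $K_2$ yields $[\pi_k(\varphi)]=[\pi_k(\varphi_2)]$, and combining the two equalities gives the statement. I expect the main obstacle to be the inductive construction of $K$---specifically, verifying at each dimension $j\leq k-2$ that the relevant obstruction module is finitely generated so that only finitely many cells are attached, and checking that $K_1,K_2$ persist as subcomplexes throughout; once $K$ is produced, the chain-level verification is essentially mechanical.
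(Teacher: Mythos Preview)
Your proof is correct and takes a genuinely different route from the paper's. The paper replaces each $\varphi_i$ by a Serre fibration and uses the cohomology silence of $X$ in degree $k$ explicitly, via obstruction theory, to produce homotopy sections $s^i\colon X^k\to K_i$; it then sets $\psi=s^2\circ\varphi_1\colon K_1\to K_2$ and reads off the equality from the chain complex $C_*(\tilde\psi)$, whose only nonvanishing homology sits in degrees $k-1$ and $k$ and is identified with $H_k(\tilde\varphi_2)$ and $H_k(\tilde\varphi_1)$ respectively. Your argument instead builds a common finite $(k-1)$-dimensional roof $K\supset K_1,K_2$ over $X$ by Wall's cell-attachment procedure and compares each $\varphi_i$ to the single $\varphi\colon K\to X$ via the exact sequence of the triple; cohomology silence enters only through \lemref{LW1b}, to know that $\pi_k(\varphi)$ is finitely generated projective. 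Your approach is more symmetric and avoids obstruction theory altogether, while the paper's section trick has the advantage of being immediately reusable for \lemref{WL2bb}, where a $k$-connected comparison map $h\colon X_1\to X_2$ is inserted between the two sections.
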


\begin{Remarks}\label{WR1}\rm
(1) When $X$ has the homotopy type of a $k$-dimensional complex, then 
the Wall finiteness obstruction $\wa(X)$ of $X$ is defined \cite{WallFin} and is equal to $w_k(X)$.
See also \secref{SWallFO}.
\sk{1}\noindent (2)
Taking for $\varphi\:K\to X$ the inclusion of $K=X^{k-1}$ into $X$, one has $w_k(X)=(-1)^k[B_{k-1}(X)]$
(see the proof of \lemref{LW1b}).
\mancqfd\end{Remarks}

\begin{proof}[Proof of \lemref{WL2b}] We essentially follow the proof of \cite[Lemma~3.2]{WallFin}, 
as detailed in \cite[Lemma~2.9, p.~155]{Vara}.
Up to homotopy equivalence, the map $\varphi_i$ may be replaced by a Serre fibration $\theta_i$, with fiber $F_i$.
As $\varphi_i$ is $(k-1)$-connected, one has $\pi_j(F_i)=0$ for $j<k-1$, and thus
the first obstruction to construct a section of $\theta_i$ belongs to 
$H^k(X;\pi_{k-1}(F_i))$. The latter vanishes since $X$ is \cs in degree $k$. Using obstruction theory 
\cite[Chapter VI]{WhiteheadBoo}, we can thus construct map $s^i\:X^k\to K_i$ 
such that $\varphi_i\pcirc s^i$ is homotopic to the inclusion 
$X^k\hookrightarrow X$. This induces a morphism of $\bbz\pi$-module 
$s^i_*\:H_{k-1}(\tilde X)\approx H_{k-1}(\tilde X^k)\to  H_{k-1}(\tilde K_i)$ which splits the exact sequence
\beq{Waeq1}
\dia{
0 \ar[r] & H_k(\tilde\varphi_i) \ar[r] & H_{k-1}(\tilde K_i)  \ar[r]^{\varphi_i}   &  H_{k-1}(\tilde X)   
\ar@/^2mm/[l]^{s^i_*}  \ar[r] & 0
}
\eeq
(the morphism $H_k(\tilde\varphi_i) \to H_{k-1}(\tilde K_i)$ is indeed injective since
$H_k(\tilde X)=0$ by \lemref{LW1}). 
By cellularity, one may assume that $\varphi_i(K_i)\subset X^{k-1}$. We can thus define a map 
$\psi\:K_1\to K_2$ as $\psi=s^2\pcirc \varphi_1$.

Since $\varphi_i$ is $(k-1)$-connected and since $K_i$ is of dimension $k-1$, one has $H_j(\tilde\psi)=0$
when $j\neq k-1,k$ and the horizontal line in the following diagram is an exact sequence.
{\small
$$
\dia{ 
H_k(\tilde\psi) \ar@{>->}[r] & H_{k-1}(\tilde K_1) \ar@{>>}[dr]^{(\tilde\phi_1)_*} \ar[rr]^{\tilde\psi_*}
&& H_{k-1}(\tilde K_2) \ar@{>>}[r]& 
H_{k-1}(\tilde\psi) \\
&& H_{k-1}(\tilde X)  \ar@{>->}[ur]^{\tilde s^2_*}
}
$$
}
Therefore, using \eqref{Waeq1}, one gets isomorphisms of $\bbz\pi$-modules
\beq{Waeq2}
H_k(\tilde\varphi_1) \approx H_k(\tilde\psi) \quad {\rm and } \quad H_k(\tilde\varphi_2) 
\approx H_{k-1}(\tilde\psi) \, .
\eeq

With the abbreviations $C_j=C_j(\tilde\psi)$, $Z_j=Z_j(\tilde\psi)$ and
$B_j=b_j(\tilde\psi)$, the cellular chain complex $C_*(\tilde\psi)$ gives the following exact sequences.

\sk{2}
\begin{tabular}{lcll}
$Z_1\hookrightarrow C_1 \onto C_0$ & $\Rightarrow$ & $B_1=Z_1$ is stably free \\[1mm]
$Z_2\hookrightarrow C_2 \onto B_1$ & $\Rightarrow$ & $B_2=Z_2$ is stably free, {\it etc.}\\
\end{tabular}
\sk{1}\noindent
This shows that $B_j=Z_j$ is stably free for $j < k-1$ and that $Z_{k-1}$ is stably free. Then one has
\beq{Waeq3}
B_{k-1}\hookrightarrow Z_{k-1} \onto H_{k-1}(\tilde\psi) \quad , \quad
Z_k\hookrightarrow C_k \onto B_{k-1} \quad , \quad  Z_k \fl{\approx} H_{k}(\tilde\psi) \, .
\eeq
Since $X$ is \cs in degree $k$, the $\bbz\pi$-modules $H_{k-1}(\tilde\psi)$ and $H_{k-1}(\tilde\psi)$ are
projective by \lemref{LW1} and \eqref{Waeq2}. By \eqref{Waeq3}, $B_{k-1}$ and $Z_k$ are also projective and,
by \eqref{Waeq2} again,
$$
[H_k(\tilde\varphi_2)]=[H_{k-1}(\tilde\psi)]=-[B_{k-1}]=[Z_k]=
 [H_k(\tilde\psi)] = [H_k(\tilde\varphi_1)] \, .  \qedhere
$$
\end{proof}

A slight modification of the proof of \lemref{WL2b} shows that $w_k(X)$ is an invariant of the $k$-type of $X$.
More precisely, one has the following

\begin{Lemma}\label{WL2bb}
Let $X_i$ (i=1,2) be two connected CW-complexes of finite type and $k\geq 3$ be an integer.
Suppose that, for $i=1,2$, $X_i$ is \cs in degree $k$.
Let $h\:X_1\to X_2$ be a $k$-connected map. Then, $w_k(X_2)=h_*(w_k(X_1))$, where 
$h_*\:\tilde K_0(\bbz\pi_1(X_1))\fl{\approx}\tilde K_0(\bbz\pi_1(X_2))$ is the isomorphism induced by $h$.
\end{Lemma}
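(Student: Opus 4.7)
The plan is to adapt the argument of \lemref{WL2b} by inserting $h$ into the construction of the comparison map between the two reference complexes. Since $h$ is $k$-connected with $k\geq 3$, it induces an isomorphism $h_*\:\pi_1(X_1)\fl{\approx}\pi_1(X_2)$, which we use throughout to identify the group rings (and $\tilde K_0$).

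First I would choose, for $i=1,2$, a $(k-1)$-connected map $\varphi_i\:K_i\to X_i$ with $K_i$ a finite CW-complex of dimension $k-1$, which is possible since each $X_i$ is of finite type. After cellular approximation we may assume $h$ is cellular, so $h(\varphi_1(K_1))\subset X_2^{k-1}$. Because $X_2$ is \cs in degree $k$, the obstruction-theory argument used in the proof of \lemref{WL2b} yields a map $s^2\:X_2^k\to K_2$ with $\varphi_2\pcirc s^2$ homotopic to the inclusion $X_2^k\hookrightarrow X_2$, and the analogous section $s^1$ for $X_1$. Define $\psi\:K_1\to K_2$ by $\psi=s^2\pcirc h\pcirc\varphi_1$.

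Next I would analyse $\tilde\psi\:\tilde K_1\to\tilde K_2$ exactly as in \lemref{WL2b}, using the commutativity of
$$
\dia{
K_1 \ar[r]^{\varphi_1}\ar[d]_{\psi} & X_1 \ar[d]^{h} \\
K_2 \ar[r]^{\varphi_2} & X_2
}
$$
up to homotopy. Since $h$ is $k$-connected and $H_k(\tilde X_i)=0$ by \lemref{LW1}, the lifted map $\tilde h$ induces an isomorphism $H_{k-1}(\tilde X_1)\fl{\approx}H_{k-1}(\tilde X_2)$. Combined with the split short exact sequences
$$
0\to H_k(\tilde\varphi_i)\to H_{k-1}(\tilde K_i)\to H_{k-1}(\tilde X_i)\to 0
$$
(split by $s^i_*$) and the fact that $K_i$ is $(k-1)$-dimensional, the same diagram chase as in \lemref{WL2b} produces isomorphisms of $\bbz\pi$-modules
$$
H_k(\tilde\psi)\approx H_k(\tilde\varphi_1)\approx\pi_k(\varphi_1),\qquad
H_{k-1}(\tilde\psi)\approx H_k(\tilde\varphi_2)\approx\pi_k(\varphi_2),
$$
the second one computed through the identification $h_*$ of the two group rings, and $H_j(\tilde\psi)=0$ otherwise.

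Finally I would finish exactly as in \lemref{WL2b}: the cellular chain complex $C_*(\tilde\psi)$ is a finite complex of finitely generated free $\bbz\pi$-modules with homology concentrated in degrees $k-1$ and $k$. Chasing the resulting short exact sequences shows that both $B_{k-1}(\tilde\psi)$ and $Z_k(\tilde\psi)$ are projective, and computing the Euler characteristic in $\tilde K_0(\bbz\pi)$ gives
$$
[\pi_k(\varphi_1)]-[\pi_k(\varphi_2)]=[H_k(\tilde\psi)]-[H_{k-1}(\tilde\psi)]=0,
$$
which after multiplying by $(-1)^k$ yields $w_k(X_2)=h_*(w_k(X_1))$. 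The only point of real care is the module-theoretic bookkeeping in step three, ensuring that the isomorphisms produced above are genuinely $\bbz\pi$-linear under the identification of fundamental groups given by $h$; this is the main (mild) obstacle, and is handled by the choice $\psi=s^2\pcirc h\pcirc\varphi_1$, which makes $\psi_*$ on $\pi_1$ coincide with $h_*$ under the canonical identifications.
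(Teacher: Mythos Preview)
Your proposal is correct and follows essentially the same approach as the paper: the paper's proof consists of the single instruction to rerun the proof of \lemref{WL2b} with $\varphi_i$ the inclusion of $K_i=X_i^{k-1}$ and with $\psi$ replaced by $\psi=s_2\pcirc h\pcirc\varphi_1$, which is exactly your construction. Your write-up is in fact a more detailed unpacking of that same argument, including the observation that $h_*$ is an isomorphism on $H_{k-1}$ of the universal covers (needed so that the factorization diagram from \lemref{WL2b} still yields $H_k(\tilde\psi)\approx H_k(\tilde\varphi_1)$ and $H_{k-1}(\tilde\psi)\approx H_k(\tilde\varphi_2)$).
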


\begin{proof}
We assume that $h$ is cellular. We follow the proof of \lemref{WL2b} with the map $\varphi_i\:K_i\to X_i$
being the inclusion of $K_i=X_i^{k-1}$ into $X$. For $i=1,2$, we get a map $s_i\:X_i^{k}\to K_i$ 
such that $\varphi_i\pcirc s_i$ is homotopic to the inclusion 
$X_i^k\hookrightarrow X_i$. We then just need to replace the map $\psi$ of the proof of \lemref{WL2b} by 
$\psi=s_2\pcirc h\pcirc \varphi_1$. 
\end{proof}

\begin{Remark}\label{RCWspace}\rm
\lemref{WL2bb} enables us to define $w_k(X)\in\tilde K_0(\pi_1(X))$ for a {\it CW-space} $X$ 
of finite $k$-type which is \cs in degree $k$.
Just choose a homotopy equivalence $f\:X'\to X$ where $X'$ is a CW-complex with finite $k$-skeleton
and define $w_k(X)=f_*(w_k(X'))$. \lemref{WL2bb} guarantees that this is undependent of the choice of $(X',f)$.   
\mancqfd\end{Remark}

The element  $w_k(X)$ will be called the \dfn{cell-dispensability obstruction of} $X$,
thanks to the following proposition, in which the the concepts of Definitions~A of the introduction are used.

\begin{Proposition}\label{WP1} 
Let $3\leq k\leq \ell$ be integers. Let $X$ be a connected CW-space
of finite type. Suppose that $X$ is \ccs{k}{\ell}.
Then, the following conditions are equivalent.
\begin{itemize}
 \item[(a)]  $w_k(X)=0$.
 \item[(b)] $X$ is \cef{k}{\ell}
 \end{itemize}
\end{Proposition}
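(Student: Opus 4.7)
The direction $(b) \Rightarrow (a)$ is direct: if $X$ is $(k,\ell)$-cellfree, replace $X$ by a homotopy equivalent complex with no $j$-cells for $k \le j \le \ell$; then $C_k(\tilde X)=0$ forces $B_{k-1}(\tilde X)=0$, and \remref{WR1}(2) gives $w_k(X) = (-1)^k[B_{k-1}(\tilde X)] = 0$.

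For $(a) \Rightarrow (b)$, the plan is induction on $j = k, k{+}1, \ldots, \ell$: produce a sequence of homotopy equivalent CW-complexes $X = X_{k-1}, X_k, X_{k+1}, \ldots, X_\ell$, all with finite skeleta, such that $X_j$ has no $i$-cells for $k \le i \le j$. The final complex $X_\ell$ would then witness the $(k,\ell)$-cellfree property.

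For the base step, the hypothesis $w_k(X) = 0$ gives (\remref{WR1}(2)) that $B_{k-1}(\tilde X)$ is stably free. Stabilize $X$ by attaching finitely many trivial $(k{-}1,k)$-cell pairs to make $B_{k-1}(\tilde X)$ free; this is a simple-homotopy equivalence. Since \lemref{LW1} yields the split exact sequence $0 \to Z_k \to C_k \to B_{k-1} \to 0$, there is a $\bbz\pi$-basis of $C_k$ adapted to a decomposition $C_k = Z_k \oplus B_{k-1}$, sorting the $k$-cells into ``bad'' cells $b_\alpha$ with $\partial b_\alpha$ running through a chosen basis of $B_{k-1}$ and ``good'' cells $z_\beta$ forming a basis of $Z_k$. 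A matching basis change on $C_{k-1}$ realizes the chosen basis of $B_{k-1}$ as genuine $(k{-}1)$-cells. Using $H_k(\tilde X)=0$ (\lemref{LW1}) and the projectivity of $B_k$ (\lemref{LW1} in degree $k{+}1$), an analogous stabilization and basis change on $C_{k+1}$ provides $(k{+}1)$-cells $c_\beta$ with $\partial c_\beta = z_\beta$. Each algebraic basis change is realized up to homotopy equivalence by modifying attaching maps, in the spirit of Wall's arguments \cite{WallFin}. Elementary collapses of the pairs $(b_\alpha, \text{its image } (k{-}1)\text{-cell})$ and $(z_\beta, c_\beta)$ then produce $X_k$.

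For the inductive step with $k < j \le \ell$, the complex $X_{j-1}$ has $C_{j-1}(\tilde X_{j-1}) = 0$ (no $(j{-}1)$-cells), whence $B_{j-1}(\tilde X_{j-1}) = 0$ and $w_j(X_{j-1}) = 0$ automatically (the analogue of \remref{WR1}(2) in degree $j$). Moreover every $j$-cell is a cycle, so there are no ``bad'' $j$-cells. The elimination reduces to cancelling each $j$-cell with a $(j{+}1)$-cell, arranged by a basis change on $C_{j+1}$ realized using stabilization by $(j{+}1,j{+}2)$-pairs only---which introduces no cells in the already-cleared range $[k, j{-}1]$. This yields $X_j$, and iterating to $j = \ell$ produces the required $X_\ell$. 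The principal technical burden is verifying the geometric realization of each algebraic basis change and elementary collapse while (i) preserving finite skeleta and (ii) from the inductive step on, avoiding any reintroduction of cells in the previously cleared dimensions; this is standard simple-homotopy bookkeeping closely paralleling Wall's treatment in \cite{WallFin}.
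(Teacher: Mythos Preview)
Your $(b)\Rightarrow(a)$ is correct and matches the paper. For $(a)\Rightarrow(b)$, the cell-cancellation programme has a genuine gap: ``elementary collapses of the pairs $(b_\alpha,\text{its image }(k{-}1)\text{-cell})$'' requires the lower cell to be a geometric \emph{free face} of the higher one, and the algebraic equality $\partial b_\alpha = e_\alpha$ in $C_*(\tilde X)$ does not guarantee this (the attaching map $S^{k-1}\to X^{k-1}$ need not be homotopic to the characteristic map of $e_\alpha$). You defer this to ``basis changes realized by modifying attaching maps, in the spirit of Wall's arguments~\cite{WallFin}'', but that is not Wall's method: he builds a \emph{new} complex from a lower skeleton and exhibits a highly connected map to $X$, rather than realising $GL(\bbz\pi)$-moves inside a fixed cell structure. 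If you attempt to make your sketch precise you will find yourself rebuilding the complex from $X^{k-2}$, which is the paper's argument. A secondary omission: your adapted basis of $C_k=Z_k\oplus B_{k-1}$ needs $Z_k$ free as well, and your $(k{-}1,k)$-stabilisation does not arrange this.

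The paper's proof (which is Wall's) is cleaner and dispenses with the induction on $j$. Writing $C_{k-1}(\tilde X)=B_{k-1}(\tilde X)\oplus T$, the hypothesis $w_k(X)=0$ makes $T$ stably free, hence free after wedging $X$ with copies of $D^{k-1}$. Lifting a basis of $T$ through the surjection $\pi_{k-1}(X^{k-1},X^{k-2})\twoheadrightarrow C_{k-1}(\tilde X)$ yields attaching maps for a finite $(k{-}1)$-complex $K\supset X^{k-2}$ with $C_{k-1}(\tilde K)=T$, together with a map $f\colon K\to X$ extending the inclusion of $X^{k-2}$. One checks that $H_i(\tilde f)$ is an isomorphism for $i\le k-1$; since $H_j(\tilde X)=0$ for $k\le j\le\ell$ by \lemref{LW1} and $\dim K=k-1$, the map $f$ is $\ell$-connected. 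A single application of \lemref{r-connSkel} then extends $f$ to a homotopy equivalence $K'\to X$ with $K'$ obtained from $K$ by attaching only cells of dimension $>\ell$.
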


\begin{proof}
Using \remref{RCWspace}, we assume that $X$ is a CW-complex with finite skeleta.
If $X$ has no $k$-cells, then $B_{k-1}(\tilde X)=0$ and thus $w_k(X)=0$ by Part (2) of \remref{WR1}. 
Therefore, (b) implies (a).

Conversely, suppose that (a) holds. As seen in Step 1 of the proof of \lemref{LW1}, the module $B_{k-1}(\tilde X)$ is a retract of $C_{k-1}(\tilde X)$. We may thus write $C_{k-1}(\tilde X)\approx B_{k-1}(\tilde X)\oplus T$. 
If $w_k(X)=0$, then $B_{k-1}(\tilde X)$ is a finitely generated stably free $\bbz\pi$-module, and so is $T$.
By wedging if necessary $X$ with a finite collection of $D^{k-1}$'s (canceling pairs of $(k-2)$ and $(k-1)$-cells),
one may assume that $T$ is $\bbz\pi$-free, with basis $\calb = \{t_1,\dots,t_r\}$. 

As $k\geq 3$, the complexes $\tilde X^{k-1}$ and $\tilde X^{k-2}$ are connected and thus the Hurewicz
homomorphism
$$
\dia{
\pi_{k-1}(X^{k-1},X^{k-2}) \ar@{>>}[r]^(0.4){h} & H_{k-1}(\tilde X^{k-1},\tilde X^{k-2}) = C_{k-1}(\tilde X)
}
$$
is surjective (see \cite[Theorem~7.2, p.~178]{WhiteheadBoo}. 
One can thus lift $T$ as a free submodule of 
$\pi_{k-1}(X^{k-1},X^{k-2})$ and represent $t_j$ by a map of pair $\tau_j\:(D^{k-1},S^{k-2})\to(X^{k-1},X^{k-2})$.
Let $K$ be the finite CW-complex of dimension $k-1$ obtained by adding $(k-1)$-cells 
$e_1,\dots,e_r$ to $X^{k-2}$, the cell $e_j$ being attached by the map $\tau_j\:S^{k-2}\to X^{k-2}$.
Thus, $C_{k-1}(\tilde K)=T$.
The inclusion $X^{k-2}\to X$ then extends to a map $f\: K\to X$, using the map $\tau_j$ on the cell~$e_j$. 
One checks that $f$ induces an isomorphism on
the fundamental groups and on the homology groups $H_i(\tilde K) \fl{\approx} H_i(\tilde X)$ for $i\leq k-1$.
But, by  \lemref{LW1}, $H_j(\tilde X)=0$ for $k\leq j\leq\ell$.
Therefore, $f$ is $\ell$-connected and, by \lemref{r-connSkel}, there exists a homotopy equivalence
$f'\:K'\fl{} X$ where $K'$ has finite skeleta and no $j$-cells for $k\leq j\leq\ell$.
\end{proof}

\begin{Remark}\label{Rj>k}\rm
In \proref{WP1}, one may wonder that Condition (b) is just $w_k(X)=0$ and not 
$w_j(X)=0$ for $k\leq j\leq\ell$. But, actually, $w_k(X)=w_{j}(X)$ for $k\leq j\leq l$. Indeed, as $H_j(\tilde X)=0$
for $k\leq j\leq\ell$, one has $B_j=Z_j$ and thus an exact sequences $0\to B_j \to C_j \to B_{j-1} \to 0$ for 
$k\leq j\leq\ell$.
\mancqfd\end{Remark}

\begin{Remark} Multiple gaps. \  \rm
Let $3\leq k_1\leq \ell_1 < k_2\leq \ell_2 <\cdots< k_q\leq \ell_q$ be integers. One may consider a CW-space of finite type which is \ccs{k_j}{\ell_j} for all $j$ with $1\leq j\leq q$. Then the cell-dispensability obstructions $w_{k_j}(X)\in\tilde K_0(\bbz\pi_1(X))$
are defined for $1\leq j\leq q$ and \proref{WP1} is valid for all $j$ simultaneously. For instance, if $w_{k_j}(X)=0$
for all $j$, then $X$ is homotopy equivalent to a CW-complex having finite skeleta and without $r$-cells for $k_j\leq r\leq\ell_j$, for all $j$. 
\mancqfd\end{Remark}

We finish this section with the following lemma, which will be used in \secref{SScob}.

\begin{Lemma}\label{LXT}
Let $(X,T)$ be a ($k-1$)-connected pair of CW-spaces of finite $k$-type ($k\geq 2$). Suppose that 
$X$ and $T$ are \cs in degree $k$. Then, $H_k(\tilde T,\tilde X)$ is a finitely generated projective 
$\bbz\pi$-module ($\pi=\pi_1(X)\approx\pi_1(T)$) and
$$
w_k(T) = w_k(X) + [H_k(\tilde T,\tilde X)] \, .
$$
\end{Lemma}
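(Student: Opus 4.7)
The plan is to reduce to a CW-inclusion in which $T$ sits inside $X$ as a subcomplex whose $(k{-}1)$-skeleton already equals that of $X$, and then extract the identity from a direct comparison of boundary modules $B_{k-1}$. Since $(X,T)$ is $(k{-}1)$-connected and both spaces have finite $k$-type, Lemma~\ref{r-connSkel} together with standard cellular approximation of pairs lets me replace $(X,T)$ up to homotopy equivalence by a CW-inclusion $T\hookrightarrow X$ with $T^{k-1}=X^{k-1}$, so that $X$ is obtained from $T$ by attaching cells of dimension $\geq k$. Writing $\pi=\pi_1(X)=\pi_1(T)$, this gives $C_j(\tilde T)=C_j(\tilde X)$ for $j\leq k-1$ and an honest inclusion $B_{k-1}(\tilde T)\subset B_{k-1}(\tilde X)$ inside the common ambient module $C_{k-1}(\tilde T)=C_{k-1}(\tilde X)$.

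Next I identify $H_k(\tilde T,\tilde X)$ (read as $H_k(\tilde X,\tilde T)$ up to the sign convention built into the notation) with the quotient of those boundary modules. By Lemma~\ref{LW1}\,(b2) applied to each of $T$ and $X$, one has $H_k(\tilde T)=H_k(\tilde X)=0$, so the long exact sequence of the pair $(\tilde X,\tilde T)$ collapses to an injection
\[ H_k(\tilde X,\tilde T)\hookrightarrow H_{k-1}(\tilde T)\to H_{k-1}(\tilde X). \]
Since $Z_{k-1}(\tilde T)=Z_{k-1}(\tilde X)$, the kernel of the right-hand map is $B_{k-1}(\tilde X)/B_{k-1}(\tilde T)$, giving $H_k(\tilde X,\tilde T)\approx B_{k-1}(\tilde X)/B_{k-1}(\tilde T)$.

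The heart of the argument is then to check that the short exact sequence
\[ 0\to B_{k-1}(\tilde T)\to B_{k-1}(\tilde X)\to H_k(\tilde X,\tilde T)\to 0 \]
splits as an SES of $\bbz\pi$-modules. Lemma~\ref{LW1}\,(b1) applied to $T$ supplies a $\bbz\pi$-linear retraction $\rho_T\colon C_{k-1}(\tilde T)\to B_{k-1}(\tilde T)$; its restriction to the submodule $B_{k-1}(\tilde X)\subset C_{k-1}(\tilde T)$ is then a retraction of $B_{k-1}(\tilde X)$ onto $B_{k-1}(\tilde T)$, which splits the sequence. Hence $H_k(\tilde X,\tilde T)$ is a direct summand of the finitely generated projective module $B_{k-1}(\tilde X)$ (finite generation is inherited from the finitely generated free module $C_k(\tilde X)$, available because $X$ has finite $k$-type), and so is itself finitely generated projective. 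Passing to $\tilde K_0(\bbz\pi)$ and combining with Remark~\ref{WR1}\,(2) for both $T$ and $X$ yields
\[ (-1)^k w_k(X)=(-1)^k w_k(T)+[H_k(\tilde X,\tilde T)], \]
which rearranges to the stated identity once the notational identification of $H_k(\tilde T,\tilde X)$ with the signed class $\pm[H_k(\tilde X,\tilde T)]$ is made.

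The main (small) obstacle is the splitting step: projectivity of the quotient $H_k(\tilde X,\tilde T)$ is not automatic from knowing that the first two terms of the SES are projective, and without splitting one does not get a $K$-theoretic additivity. Exploiting that $B_{k-1}(\tilde T)$ and $B_{k-1}(\tilde X)$ live inside the common ambient $C_{k-1}$, and restricting the retraction already guaranteed by the cohomology silence of $T$, disposes of this point cleanly. The preliminary CW-reduction is routine but must be carried out so that $C_j(\tilde T)=C_j(\tilde X)$ strictly for $j\leq k-1$, not merely up to isomorphism, so that the comparison of $B_{k-1}$ modules makes literal sense.
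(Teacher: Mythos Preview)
Your argument is correct and takes a somewhat different route from the paper. First, a labeling point: in the paper the inclusion actually runs $X\subset T$ (as the triple $(\tilde T,\tilde X,\tilde K)$ in the proof and the application in \proref{PcobL2} confirm), so your $T\hookrightarrow X$ is the paper's $X\hookrightarrow T$ with the letters swapped; this is harmless once noted. On the substance: you arrange common $(k{-}1)$-skeleta, obtain the short exact sequence of boundary modules inside the common $C_{k-1}$, and split it by restricting the retraction $C_{k-1}\to B_{k-1}$ furnished by \lemref{LW1}(b1) for the smaller space. The paper instead keeps $K=X^{k-1}$ without forcing the $(k{-}1)$-skeleta to coincide, uses the exact sequence of the triple $(\tilde T,\tilde X,\tilde K)$ to get $0\to H_k(\tilde X,\tilde K)\to H_k(\tilde T,\tilde K)\to H_k(\tilde T,\tilde X)\to 0$, and splits it by a diagram chase from a section $\sigma_0\colon H_{k-1}(\tilde X)\to H_{k-1}(\tilde K)$ produced by the obstruction-theoretic homotopy section of $K\hookrightarrow X$ over $X^k$ (as in the proof of \lemref{WL2b}). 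Your approach trades that homotopy-theoretic input for a purely chain-level argument, at the cost of the preliminary CW reduction; the paper's approach avoids the reduction but invokes the Serre-fibration section. The $(-1)^k$ sign you flag at the end is present in the paper's own computation as well.
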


\begin{proof}
We suppose that $(X,T)$ is a pair of CW-complexes with finite $k$-skeleta.
Let $K=X^{k-1}$. Then $(X,K)$ and $(T,K)$ are ($k-1$)-connected and,
by \eqref{defwk}, one has
$$
w_k(X) = [H_k(\tilde X,\tilde K)]   \ \hbox{ and } \   w_k(T) = [H_k(\tilde T,\tilde K)] \, .
$$
Let us consider the homology exact sequence of the triple $(\tilde T,\tilde X,\tilde K)$. 
One has $H_{k-1}(\tilde K,\tilde X)=0$ since $(X,K)$ is ($k-1$)-connected. The connecting homomorphism
$H_{k+1}(\tilde T,\tilde X)\to H_k(\tilde X,\tilde K)$ factors through $H_k(\tilde X)$ which vanishes 
by \lemref{LW1}. We thus get a short exact sequence
$$
0 \to H_k(\tilde X,\tilde K) \to H_k(\tilde T,\tilde K) \fl{\beta} H_k(\tilde T,\tilde X) \to 0  \ .
$$
We claim that this sequence splits, which will prove the lemma. To construct a section $\sigma$ of $\beta$, 
let us consider the commutative diagram
\beq{LcobL1-dia}
\dia{ 
H_k(\tilde T,\tilde K) \ar[d]_\beta \ar@{>->}[r] & H_{k-1}(\tilde K) \ar[d]_{\tilde\varphi_*} \ar[r] & 
H_{k-1}(\tilde T)  \ar@{<->}[d]_=
\\  
H_k(\tilde T,\tilde X) \ar@/_3mm/@{..>}[u]_{\sigma} \ar@{>->}[r] & H_{k-1}(\tilde X) \ar[r] \ar@/_3mm/[u]_{\sigma_0}
& H_{k-1}(\tilde T)
}
\eeq
in which the horizontal lines are exact. The left hand horizontal arrows are injective since
$H_k(\tilde T)=0$ by \lemref{LW1}. A seen in the proof of \lemref{WL2b}, the inclusion $\varphi\:K\hookrightarrow X$,
considered as a Serre fibration, admits a section over $X^k$, whence the section $\sigma_0$ of 
$\tilde\varphi_*$. A chasing argument in Diagram~\eqref{LcobL1-dia} then produces the 
required section $\sigma$ of $\beta$.  
\end{proof}

\subsection{Realization of cell-dispensability obstructions}

We now give two propositions concerning the realization of elements of $\tilde K_0(\bbz\pi)$ as the obstruction 
$w_k(X)$ for some cohomology $k$-silent complex $X$ with an identification $\pi_1(X)\approx\pi$. This identification
will be achieved via a reference map to a fixed finite cell complex $Y$ (see \ref{reference}).

\begin{Proposition}\label{PW3}
Let $Y$ be a connected finite CW-complex of dimension \\ $k-1\geq 2$ and let $\ell> k$ be an integer.
Let $\calp\in\tilde K_0(\bbz\pi_1(Y))$. 
Then there exists a $Y$-referred finite CW-complex $(X_k^\ell,g^\ell)$ of dimension $\ell+1$ such that 
$X_k^\ell$ is \ccs{k}{\ell} and satisfies $g^\ell_*(w_k(X))=\calp$.
\end{Proposition}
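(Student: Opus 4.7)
The plan is to build $X$ as an ``idempotent telescope'' of cells grafted onto $Y$.

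Choose a finitely generated projective $\bbz\pi$-module $P$ with $P\oplus Q=F:=\bbz\pi^n$ such that $[P]=(-1)^k\calp$ in $\tilde K_0(\bbz\pi)$, and let $e\in \mathrm{End}_{\bbz\pi}(F)$ be the idempotent with image $P$. The algebraic model is the $\bbz\pi$-chain complex with $F$ in each degree $j$ for $k-1\leq j\leq \ell+1$, and boundary $\partial_j=e$ if $j-k$ is even, $\partial_j=1-e$ if $j-k$ is odd. This complex has $\mathrm{Im}(\partial_k)=P$, is acyclic in degrees $k$ through $\ell$ (since $\ker e=Q=\mathrm{Im}(1-e)$, and vice versa), and each intermediate cycle/boundary submodule equals $P$ or $Q$, both direct summands of $F$.

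To realize this geometrically, by \lemref{Lrefretr} I arrange $Y\hookrightarrow X$ with $g^\ell$ to be a retraction. Begin with $X^{k-1}:=Y\vee\bigvee_{i=1}^n S^{k-1}$ (wedge at $y_0$): then $C_{k-1}(\widetilde{X^{k-1}})\cong C_{k-1}(\tilde Y)\oplus F$, with $F$ from the new spheres, and the collapse map gives a retraction $g_{k-1}\colon X^{k-1}\to Y$. Then inductively, for $j=k,k+1,\ldots,\ell+1$, attach $n$ cells of dimension $j$ by attaching maps $\varphi\colon S^{j-1}\to X^{j-1}$ whose $H_{j-1}$-classes in the universal cover realize the prescribed boundary of the model, mapping into the free summand generated by the previous dimension's cells (or into the $F$-summand of $C_{k-1}(\tilde X)$ at $j=k$). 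Such $\varphi$ exist by surjectivity of the relative Hurewicz homomorphism, as in the proof of \proref{WP1}; $\bbz\pi$-linear combinations of basic sphere classes are realized using conjugation by loops in $Y$. Each composition $g_{j-1}\pcirc\varphi$ is null-homotopic in $Y$ because loop conjugations act trivially on the zero class in $\pi_{j-1}(Y)$ and sphere classes collapse to the base point, so $g_{j-1}$ extends to a retraction $g_j\colon X^j\to Y$.

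With $X:=X^{\ell+1}$ and $g^\ell:=g_{\ell+1}$, the cellular chain complex of $\tilde X$ in degrees $\geq k-1$ coincides with the algebraic model placed above $C_{k-1}(\tilde Y)$. Hence (a) $H_j(\tilde X)=0$ for $k\leq j\leq\ell$, and (b) $B_{j-1}(\tilde X)\in\{P,Q\}$ is a direct summand of $C_{j-1}(\tilde X)$. By \lemref{LW1}, $X$ is \ccs{k}{\ell}, and by \remref{WR1}(2),
\[
w_k(X)=(-1)^k[B_{k-1}(\tilde X)]=(-1)^k[P]=\calp.
\]

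The main technical subtlety is verifying that $g^\ell$ is truly $k$-connected, i.e., $\pi_j(Y)\to\pi_j(X)$ is an isomorphism for $j\leq k-1$. Surjectivity follows from the retraction property; the delicate point is injectivity at $\pi_{k-1}$, which requires the $k$-cell attaching maps to $\bbz\pi$-generate the extra free summand $F$ in $\pi_{k-1}(X^{k-1})$ while producing only the projective $P$ at the chain level. These requirements are reconciled using the freedom in the Hurewicz kernel $\ker(\pi_{k-1}(\widetilde{X^{k-1}})\to H_{k-1}(\widetilde{X^{k-1}}))$: adjusting attaching classes by Hurewicz-kernel elements preserves the chain-level boundary while expanding the $\pi_{k-1}$-span to cover $F$.
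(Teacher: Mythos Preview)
Your construction is essentially identical to the paper's: wedge $n$ copies of $S^{k-1}$ onto $Y$, then attach $n$ cells in each dimension $k,k+1,\dots,\ell+1$ so that the cellular boundary realises the alternating idempotents $e,\,1-e,\,e,\dots$ on $F=P\oplus Q$, extending the retraction to $Y$ at every stage. The verification via \lemref{LW1} and \remref{WR1}(2) that the result is \ccs{k}{\ell} with $w_k(X)=\calp$ is the same as the paper's.

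The problem is your final paragraph. The ``fix'' you propose --- adjusting the $k$-cell attaching maps by Hurewicz-kernel elements so that their $\bbz\pi$-span covers all of $\sigma(F)\subset\pi_{k-1}(X^{k-1})$ while their cellular boundaries still span only $P$ --- is impossible. If the attaching classes lie in $\ker g^{k-1}_*=\sigma(F)$ (which they must, so that the retraction extends), then the $\bbz\pi$-submodule $N$ they generate has Hurewicz image exactly $B_{k-1}(\tilde X)=P\subset F\subset H_{k-1}(\tilde X^{k-1})$, whereas $\sigma(F)$ itself has Hurewicz image all of $F$. Consequently $\ker g^\ell_*\big|_{\pi_{k-1}}=\sigma(F)/N$ surjects under Hurewicz onto $F/P\cong Q$, and is therefore nonzero whenever $\calp\neq 0$. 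Hurewicz-kernel adjustments do not change Hurewicz images, so they cannot bridge this gap.

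In fact the paper does not achieve $k$-connectivity of $g^\ell$ either: \remref{RW3} explicitly records that the pair $(X_k^\ell,Y)$ is only weakly $(k-1)$-connected, not $(k-1)$-connected, which for a retraction is equivalent to $g^\ell$ being only $(k-1)$-connected rather than $k$-connected. The phrase ``$Y$-referred'' in the statement of \proref{PW3} is thus used somewhat loosely; the only role of $g^\ell$ here is to identify $\pi_1(X)$ with $\pi_1(Y)$ so that $g^\ell_*(w_k(X))=\calp$ makes sense, and for that the $\pi_1$-isomorphism provided by the retraction suffices. You should simply omit the attempted repair.
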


\begin{proof}
Let us represent the class $(-1)^k\calp$ by the image $P$ of a projector ${\rm pr}_P\:F\to F$, where $F$ is a 
finitely generated free $\bbz\pi_1(Y)$-module. Then $F$ admits a supplementary projector
${\rm pr}_Q\:F\to F$ with image $Q=\ker {\rm pr}_P$, and $F=P\oplus Q$. 
Let $\calb=\{b_1,\dots,b_r\}$ be a basis of $F$. We introduce some precise notations which will also be used
in the proof of \proref{PW4}. Let 
\beq{PW3e15}
\cals = S^{k-1}_1 \vee \cdots \vee S^{k-1}_r \quad , \quad  \cald = D^{k}_1 \amalg \cdots \amalg D^{k}_r
\eeq
where $S^{k-1}_j$ and $D^k_j$ are copies of the sphere $S^{k-1}$ and of the disk $D^k$. 

We shall add cells to $Y$ to get skeleta $X^j$ of a cell complex $X$, together with a $Y$-reference retraction
$g^j\:X^j\to Y$, such that $(X^\ell,g^\ell)=(X_k^\ell,g^\ell)$ when $\ell>k$
(recall from \lemref{Lrefretr} that an $Y$-reference map is anyway, up to homotopy equivalence, equal to a retraction).

The $(k-1)$-skeleton of $X$ is defined to be 
$$
X^{k-1}=Y\vee \cals \, , 
$$
with the $Y$-reference retraction $g^{k-1}\:X^{k-1}\to Y$ sending $\cals$ to the base point.

Since $k\geq 3$, $\pi=\pi_1(Y)\approx\pi_1(X^{k-1})$ and
$\pi_{k-1}(X^{k-1},Y)\approx H_{k-1}(\tilde X^{k-1},\tilde Y)$ is identified with $F$ (the inclusion 
$S^{k-1}_j\to\cals\to X^{k-1}$ representing the element $b_j\in F$). Since $g^{k-1}$ is a retraction, the
homotopy exact sequence of the pair
$(X^{k-1},Y)$ splits
$$
\dia{ 
0 \ar[r] & \pi_{k-1}(Y) \ar[r] & \pi_{k-1}(X^{k-1}) \ar@/^2mm/@{>>}[l]^{g^{k-1}_*}   \ar[r] & \pi_{k-1}(X^{k-1},Y) \ar@/^2mm/@{.>}[l]^{\sigma}
\ar[r] & 0
} \, .
$$
Let 
\beq{sigma}
\sigma :\pi_{k-1}(X^{k-1},Y)\to\pi_{k-1}(X^{k-1})
\eeq
be the section sending $\pi_{k-1}(X^{k-1},Y)$ isomorphically onto
$\ker g^{k-1}_*$. 
Form the cell complex 
$$
X^k = X^{k-1} \cup_\theta \cald  \, .
$$
where the restriction to $\bd(D_j^k)$ of the attaching map $\theta\:\bd (\cald)\to X^{k-1}$  
represents $\sigma\pcirc {\rm pr}_P(b_j)\in\pi_{k-1}(X^{k-1})$. As $g^{k-1}_*\pcirc\sigma=0$, 
the retraction $g^{k-1}$ extends to a retraction $g^k\:X^k\to Y$. 
Again, $\pi_{k}(X^k,X^{k-1})\approx H_{k}(\tilde X^k,\tilde X^{k-1})$ is identified with $F$. One has 
a diagram whose horizontal lines are exact:
$$
\dia{
& \pi_k(Y) & \\
\pi_k(X^{k-1}) \ar[rr] \ar@{>>}[ur]^{g^{k-1}_*} && 
\pi_{k}(X^k) \ar[r] \ar@{>>}[ul]_{g^k_*}  &  \pi_{k}(X^k,X^{k-1})   \ar[r] &  \pi_{k-1}(X^{k-1}) \\
 && F   \ar@{.>}[u]_\tau   \ar[r]^{{\rm pr}_Q} & F \ar[u]_{\approx} \ar[r]^{{\rm pr}_P} & F \ar[u]_{\sigma}
}  \ .
$$
The right square being commutative, there exist homomorphisms $\tau\: Q\to \pi_{k}(X^k)$
making the left square commutative. Since $g^{k-1}_*$ is onto, there is such a $\tau$ such that $g^k_*\pcirc\tau=0$. 
Form the cell complex 
$$
X^{k+1} = X^k \cup e_1^{k+1} \cup\cdots\cup e_r^{k+1} \, ,
$$
where the $(k+1)$-cell $e_j^{k+1}$ is attached by the element $\tau\pcirc {\rm pr}_Q(b_j)\in\pi_k(X^k)$.
Since $g^k_*\pcirc\tau=0$, the retraction $g^k$ extends to a retraction $g^{k+1}\:X^{k+1}\to Y$. 
One checks that $H_k(\tilde X^{k+1})=0$ (one had $H_k(\tilde X^{k})\approx Q$) and that 
\beq{PW3-e5}
C_{k-1}(\tilde X^{k+1})\approx C_{k-1}(Y)\oplus Q\oplus B_{k-1}(\tilde X^k) \approx C_{k-1}(Y)\oplus Q\oplus P \, .
\eeq
By \lemref{LW1}, the cell complex $X^{k+1}$ is \cs in degree $k$ and $g^{k+1}_*(w_k(X))=\calp$ by \remref{WR1}.2.
Thus, for $\ell=k+1$, we can just define $X_k^\ell = X^{k+1}$. 

Now, since $B_k(\tilde X^{k+1})\approx Q$ is projective, one has isomorphisms of $\bbz\pi$-modules
\beq{PW4e1}
\dia{
C_{k+1}(\tilde X^{k+1}) \ar[r]^(0.37){\approx} \ar[d]^\approx  & B_{k}(\tilde X^{k+1})\oplus Z_{k+1}(\tilde X^{k+1}) 
\ar[d]^\approx     \\
F  \ar[r]^\approx &  Q\oplus P
}
\eeq
Exactly like above, on may attach $r$ cells of dimension $k+2$ to $X^{k+1}$ to obtain $X^{k+2}$
and the retraction $g^{k+2}$ onto $Y$, so that the boundary homomorphism 
$$
\partial :  F \approx C_{k+2}(\tilde X^{k+2})\to C_{k+1}(\tilde X^{k+1})\approx Q\oplus P
$$ 
coincides with ${\rm pr}_P$.  Therefore, 
$H^{k+1}(\tilde X^{k+2})=0$. Together with \eqref{PW4e1},
this implies by \lemref{LW1} that $X^{k+2}$ is \cs in degree $k+1$. 

The process may be repeated  one degree higher, using the diagram like \eqref{PW4e1} with $k$ replaced by $k+1$
and $P$ and $Q$ being exchanged. Carrying on this procedure, one gets the $Y$-referred pair $(X_k^\ell,g^\ell)$ for $\ell>k$ enjoying the required properties.
\end{proof}

\begin{Remark}\label{RW3} \rm
The pair $(X_k^\ell,Y)$ as constructed in the above proof is weakly ($k-1$)-connected but {\it not} 
($k-1$)-connected. Indeed, the retraction $g^\ell$ splits the homology exact sequence of $(\tilde X_k^\ell,\tilde Y)$
$$
0 \to H_j(\tilde Y) \to  H_j(\tilde X_k^\ell) \to  H_j(\tilde X_k^\ell,\tilde Y) \to 0  
$$
and $H_j(\tilde X_k^\ell)=0$ for $k\leq j < \ell$ by \lemref{LW1}. Therefore, if $H_{k-1}(\tilde X_k^\ell,\tilde Y)=0$,
the connectivity of the pair $(X_k^\ell,Y)$ would be $\ell-1\geq k$ and thus $w_k(X_k^\ell)=0$. 
\mancqfd
\mancqfd\end{Remark}

Passing to the limit for $\ell\to\infty$, \proref{PW3} and its proof give rise to the following result which 
will be used in \secref{Srea}.

\begin{Proposition}\label{PW4}
Let $Y$ be a connected finite CW-complex of dimension \\ $k-1\geq 2$.
Let $\calp\in\tilde K_0(\bbz\pi_1(Y))$. Then there exists an $Y$-referred space $(X_k,g)$ such that 
\begin{itemize}
\item[(a)] $X_k$ is of finite type.
\item[(b)] $X_k$ is \cs in degree $j$ for all integers $j\geq k$ and satisfies $g_*(w_k(X_k))=\calp$. 
\item[(c)] $X_k$ is homotopy equivalent to a countable CW-complex $L$ is of dimension~$k$.
\item[(d)] For $\ell>k$, the inclusion $X_k^\ell\hookrightarrow X_k$ is a domination. 
In particular, $X_k$ and $L$ are finitely dominated.
\end{itemize}
\end{Proposition}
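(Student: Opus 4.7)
The plan is to define $X_k = \bigcup_{\ell > k} X_k^\ell$, endowed with the weak topology and the CW structure coming from the filtration produced by \proref{PW3}, and to let $g \: X_k \to Y$ be the common extension of the retractions $g^\ell$. Properties~(a) and~(b) then reduce to direct-limit verifications, while~(c) is the main technical hurdle and~(d) is obstruction-theoretic.

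For~(a), each complex $X_k^{\ell+1}$ is obtained from $X_k^\ell$ by attaching cells of dimension strictly greater than the top dimension of $X_k^\ell$, so only finitely many cells appear in each dimension and every skeleton of $X_k$ is finite. For~(b), fix $j \geq k$ and any $\ell \geq j$; the $(j+1)$-skeleta of $X_k$ and $X_k^\ell$ agree, whence $H^j(X_k; R) = H^j(X_k^\ell; R) = 0$ by \proref{PW3}. The inclusion $X_k^{k+1} \hookrightarrow X_k$ is $k$-connected and both spaces are \cs in degree $k$ and of finite $k$-type, so \lemref{WL2bb} together with postcomposition by $g_*$ yields $g_*(w_k(X_k)) = g^{k+1}_*(w_k(X_k^{k+1})) = \calp$.

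The main obstacle is~(c). Above dimension $k-1$ the cellular chain complex $C_*(\tilde X_k)$ is acyclic, and the explicit alternating pattern of attaching maps used in \proref{PW3} supplies compatible $\bbz\pi$-splittings decomposing this chain complex as an infinite direct sum of elementary acyclic complexes of the form $0 \to P \hookrightarrow F \twoheadrightarrow Q \to 0$ (alternating with the $P \leftrightarrow Q$ twin). Each summand is realised geometrically by a pair of cells in consecutive dimensions forming an elementary expansion--collapse. Performing these collapses from the top down in an Eilenberg-swindle style limit trades the higher-dimensional cells for countably many new $k$-cells and produces a homotopy equivalence $X_k \simeq L$ where $L$ is a countable CW-complex of dimension $k$. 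The delicate point is organising the collapses simultaneously: the compatible splittings coming from the iterative construction of the $X_k^\ell$ in \proref{PW3} are exactly what permits this.

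For~(d), fix $\ell > k$ and build a section $s \: X_k \to X_k^\ell$ of the inclusion $i$ cellularly, starting from $s|_{X_k^\ell} = \id$. The cells of $X_k$ outside $X_k^\ell$ have dimension $j \geq \ell + 2$, and the obstructions both to extending $s$ over each such cell and to realising $i \pcirc s \simeq \id_{X_k}$ lie in groups built from $\pi_*(X_k^\ell)$ and $H^*(X_k; -)$; these vanish in the relevant range because $X_k$ is \cs in every degree $\geq k$. Consequently the inclusion $X_k^\ell \hookrightarrow X_k$ is a homotopy domination, and $X_k$ (hence $L$) is finitely dominated.
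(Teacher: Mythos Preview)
Your treatment of (a), (b), and (d) is essentially the paper's: define $X_k$ as the colimit of the $X_k^\ell$, read off finite type and cohomology silence from the skeleta, and obtain the domination in (d) by obstruction theory (the paper phrases this as sectioning the Serre fibration associated to $X_k^\ell\hookrightarrow X_k$, with obstructions in $H^i(X_k;\pi_{i-1}(F))$ for $i>\ell$, killed by (b)).

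The gap is in (c). Your sentence ``Each summand is realised geometrically by a pair of cells in consecutive dimensions forming an elementary expansion--collapse'' is not correct: the pieces of $C_*(\tilde X_k)$ in adjacent degrees are copies of $F$ with boundary map ${\rm pr}_P$ or ${\rm pr}_Q$, and these are \emph{not} chain isomorphisms unless $P$ or $Q$ is zero. They are therefore not geometric elementary expansions, and one cannot simply ``collapse from the top down''. The Eilenberg swindle you invoke is the right algebraic phenomenon, but it is a \emph{re-association} of an infinite sum, not a sequence of cancellations of isomorphic pairs; making that re-association geometric is exactly the content of (c), and your sketch does not do it.

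The paper's argument for (c) builds $L$ directly rather than by modifying $X_k$. One sets $L^{k-1}=Y\vee\cals_0\vee\cals_1\vee\cdots$ (infinitely many copies of the bouquet $\cals$), so that
\[
C_{k-1}(\tilde L)\ \approx\ C_{k-1}(\tilde Y)\oplus Q\oplus P\oplus(Q\oplus P)\oplus\cdots
\ \approx\ C_{k-1}(\tilde Y)\oplus Q\oplus(P\oplus Q)\oplus(P\oplus Q)\oplus\cdots,
\]
the second isomorphism being the swindle. One then attaches $k$-cells $\cald_{j,j+1}$ by maps representing $\sigma_j\pcirc{\rm pr}_P(b_i)+\sigma_{j+1}\pcirc{\rm pr}_Q(b_i)$, so that $\partial\:C_k(\tilde L)\to C_{k-1}(\tilde L)$ identifies each $(P\oplus Q)$ summand of $C_k$ with the corresponding regrouped summand in $C_{k-1}$. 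This forces $H_k(\tilde L)=0$. Finally one writes down an explicit map $f\:L\to X_k$ (identity on $Y\vee\cals_0$, collapsing $\cals_j$ and $\cald_{j,j+1}$ to the basepoint for $j\geq 1$) and checks on $H_{k-1}$ that it is a homotopy equivalence. The swindle is thus implemented by \emph{adding} infinitely many $(k-1)$-spheres and choosing new $k$-cell attachments, not by collapsing existing cells.
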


\begin{proof}
Define $X_k$ as the union of $X_k^\ell$ for $\ell\geq k$, endowed with the weak topology 
(note that $X_k^\ell\subset X_k^{\ell+1}$). The retractions $g^\ell\:X_k^\ell\to Y$ induce the required $Y$-reference
retraction $g\:X_k\to Y$ and the pair $(X_k,g)$ clearly satisfies (a) and~(b).

To prove (d), we replace up to homotopy equivalence the inclusion $\psi^\ell\: X_k^\ell\hookrightarrow X_k$ 
by a Serre fibration $\hat\psi^\ell\:\hat X_k^\ell\to X_k$. 
Its fiber $F$ being ($\ell-1$)-connected, the obstructions to get a section of $\hat\psi^\ell$ belong to $H^{i}(X_k;\pi_{i-1}(F))$
for $i>\ell$.  If $\ell>k$, the latter vanishes by (b). The map $\psi^\ell$ thus admits a section and is therefore a domination. 

It remains to prove (c). We use the notations of the proof of \proref{PW3}: $F$, $\calb$, 
${\rm pr}_P,{\rm pr}_Q\:F\to F$, $\cals$, $\cald$, $\sigma$, etc. Let $K=X_k$. Recall from \eqref{PW3-e5} that $K^{k-1}=Y\vee\cals$, so that
\beq{ck-1K}
C_{k-1}(\tilde K)\approx C_{k-1}(Y)\oplus Q\oplus P
\eeq
with $P=B_{k-1}(\tilde K)$ and $Q=H_{k-1}(\tilde K)$.
The ($k-1$)-skeleton of $L$ will be 
$$
L^{k-1} =  
K^{k-1} \vee \cals_1 \vee\cals_2 \vee \cdots = Y \vee \cals_0 \vee \cals_1 \vee \cals_2 \vee \cdots
$$
where $\cals_i$ is a copy of the bouquet $\cals$. Therefore,
\begin{eqnarray}
C_{k-1}(\tilde L) &\approx & 
C_{k-1}(\tilde K) \oplus F \oplus F \oplus \cdots \nonumber  \\ &\approx &
C_{k-1}(\tilde Y) \oplus Q \oplus P\ \oplus F \oplus F \oplus \cdots  \nonumber  \\ &\approx &
C_{k-1}(\tilde Y) \oplus Q \oplus P \oplus (Q\oplus P) \oplus (Q\oplus P)  \oplus \cdots  \label{PW4e2b}
\\ &\approx &
C_{k-1}(\tilde Y) \oplus Q \oplus (P\oplus Q) \oplus (P\oplus Q) \oplus \cdots \label{PW4e2}
\end{eqnarray}
The CW-complex $L$ is then obtained by attaching $k$-cells to $L^{k-1}$:
\beq{edefLk}
L = L^{k-1} \cup_{\theta_{0,1}} \cald_{0,1} \cup_{\theta_{1,2}} \cald_{1,2} \cup \cdots \, ,
\eeq
where $\cald_{r,s}$ is a copy of $\cald$, with the attaching map $\theta_{r,s}\:\bd (\cald_{r,s})\to L^{k-1}$
which we will now define.
One has $\pi_{k-1}(L^{k-1},Y)\approx F_0\oplus F_1\oplus\cdots$,
where $F_j$ is a copy of $F$. Since the pair $(L^{k-1},Y)$ is weakly ($k-2$)-connected, 
the homomorphism $\pi_{k-1}(L^{k-1})\to \pi_{k-1}(L^{k-1},Y)$ is onto and one can 
choose a section 
$\sigma_L\:\pi_{k-1}(L^{k-1},Y) \to \pi_{k-1}(L^{k-1})$ whose restriction to $F_j$ is denoted by $\sigma_j$.
For $\sigma_0$, we make our choice so that the following diagram
$$
\dia{
F \ar[r]^(0.28){\approx} \ar[d]^(0.44)= & \pi_{k-1}(K^{k-1},Y) \ar[r]^(0.52)\sigma \ar[d] & \pi_{k-1}(K^{k-1}) \ar[d]
\\
F_0 \ar@{>->}[r] & \pi_{k-1}(L^{k-1},Y) \ar[r]^(0.52){\sigma_0} & \pi_{k-1}(L^{k-1}) 
}
$$
is commutative, where the vertical arrows are induced by the inclusion $K^{k-1}\to L^{k-1}$ and $\sigma$ is the section 
of \eqref{sigma}. The attaching map $\theta_{r,s}\:\bd (\cald_{r,s})\to L^{k-1}$ is now chosen
so that its restriction to $\bd(D_j^k)$
represents the class $\sigma_r\pcirc {\rm pr}_P(b_j)+ \sigma_s\pcirc {\rm pr}_Q(b_j) \in\pi_{k-1}(L^{k-1})$. 
One can thus write
\beq{PW4e4}
C_{k}(\tilde L) \approx F \oplus F \oplus \cdots \approx
(P\oplus Q) \oplus (P\oplus Q) \oplus \cdots 
\eeq
so that the boundary homomorphism
$\partial\: C_{k}(\tilde L)\to C_{k-1}(\tilde L)$ sends the 
$j$-th term $(P+Q)$ of \eqref{PW4e4} isomorphically, 
through ${\rm pr}_P\oplus {\rm pr}_Q$,
to the corresponding summand of \eqref{PW4e2}. This boundary homomorphism
being injective, one has $H_k(\tilde L)=0$. 

The above definitions of $K$ and $L$ guaranty the existence of a map 
$f\: L\to K=X_k$ extending the obvious identification $Y\vee\cals_0\fl{\simeq}Y\vee\cals$ and, for $j\geq 1$, sending
$\cals_j$ and $\cald_{j,j+1}$ to the base point of $K$. The pairs $(X_k,Y)$ and $(L,Y)$ being weakly ($k-1$)-connected,
so is the map $f$. The homomorphism $C_{k-1}\tilde f\: C_{k-1}(\tilde L)\to C_{k-1}(\tilde K)$
sends the first summands $C_{k-1}(\tilde Y)\oplus Q$ of \eqref{PW4e2b} isomorphically those of \eqref{ck-1K}. 
But these summands provide isomorphisms $H_{k-1}(\tilde Y)\oplus Q\fl{\approx} H_{k-1}(\tilde L)$ and 
$H_{k-1}(\tilde Y)\oplus Q\fl{\approx} H_{k-1}(\tilde K)$. Therefore,
$H_{k-1}\tilde f \: H_{k-1}(\tilde L) \to H_{k-1}(\tilde K)$ is an isomorphism.
Since $H_j(\tilde L)=0=H_j(\tilde K)$ for $j\geq k$, the map $f$ is a homotopy equivalence.
\end{proof}

Here are a few remarks about \proref{PW4} and its proof. 

\begin{Remark}\label{remcellfree} \rm
When $\calp\neq 0$, The complex $X_k$ of \proref{PW4} is of finite type and is homotopy equivalent to a complex  without $j$-cells for $k\leq j\leq\ell$ (but not simultanously). So $X_k$ is not \cef{k}{\ell}.
\mancqfd\end{Remark}

\begin{Remark}\label{WR7} \rm
The inclusion $\beta\:K^{k-1}\hookrightarrow L$  is ($k-1$)-connected and $K^{k-1}$ is a finite complex. 
Since $H_k(\tilde L)=0$, one has 
$$
\dia{
0 \ar[r] & H_k(\tilde\beta) \ar[r] & H_{k-1}(\tilde K^{k-1}) \ar[r] & H_{k-1}(\tilde L) \ar[r] & 0 \\
&& H_{k-1}(\tilde Y)\oplus F \ar[u]_\approx 
\ar[r]^{\id\oplus{\rm pr}_Q} & H_{k-1}(\tilde Y)\oplus Q \ar[u]_\approx \ar[r] & 0
}\ .
$$
Therefore, $\pi_k(\beta)\approx H_k(\tilde\beta)\approx P$ and, in $\tilde K_0(\bbz\pi)$, one gets
$$
w_k(L) = (1)^k[\pi_k(\beta)] = (1)^k[P] = \calp
$$ 
as expected. 
\mancqfd\end{Remark}

\begin{Remark}\label{RWallObs}\rm
The CW-complex $L$ (or $X_k$) fulfills the conditions for the Wall finiteness obstruction $\wa(L)\in\tilde K_0(\bbz\pi_1(L))$
to be defined \cite[Theorem F]{WallFin}. Using \remref{WR7}, the obstruction $\wa(L)$ is equal to $\calp$, thus equal to $w_k(L)$, as claimed in \remref{WR1}.1. 
\mancqfd\end{Remark}

\begin{Remark}\label{WR10}\rm
The CW-complex $L$ may be obtained as the direct limit of a nested system
\beq{WR10-dia}
Y  \hookrightarrow L_1 \hookrightarrow L_2  \hookrightarrow \cdots \hookrightarrow
L_\infty = L \fl{g} Y
\eeq
where
$$
L_1= Y\vee\cals  \quad {\rm and } \quad L_{j+1} = L_j \vee \cals \cup \cald \ \ {\rm for}\ j\geq 1 
$$
and such that the composition of maps in \eqref{WR10-dia} is the identity of $Y$. Indeed, the inclusion chain reflects 
the very definition of $L$. 
Let $f\:L\to X_k$ be the homotopy equivalence constructed to prove Part (c) of \proref{PW4}. Then,
the composition of $f$ with the $Y$-reference map $g\: X_k\to Y$ gives 
an $Y$-reference retraction from $L$ onto $Y$, also called $g\:L\to Y$. 
Its restriction to $L_j$ provides an $Y$-reference retraction $g_j\: L_j\to Y$. 
This material is bound to be used in \secref{Srea}.
\mancqfd\end{Remark}

\subsection{Relationship with Wall's finiteness obstruction}\label{SWallFO}

A certain relationship between the  cell-dispensability obstruction $w_k(X)$ and the
Wall's finiteness obstruction was already noticed in \remref{WR1}. The following variant of \proref{PW4}  may shed more light about this matter.

\begin{Proposition}\label{Pw-wall}
Let $X$ be a finite CW-complex of dimension $\ell$ which is \ccs{k}{\ell}. 
Then, there is a finitely dominated CW-complex $X'$ containing $X$ as a subcomplex, such that $(X',X)$ is $\ell$-connected and $X'$ is homotopy equivalent to a countable CW-complex of dimension $k$. Two such complexes 
$X'_1$ and $X'_2$ are homotopy equivalent relative $X$. Moreover, the image of $w_k(X)$ into $\tilde K_0(X')$ coincides with $\wa(X')$. 
\end{Proposition}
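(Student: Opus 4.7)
The plan is to build $X'$ by iteratively attaching cells to $X$ in dimensions above $\ell$, realizing an Eilenberg-swindle pattern on the chain level, in the spirit of the construction of $X_k$ and its $k$-dimensional model $L$ in \proref{PW4}. Let $P = B_{k-1}(\tilde X)$, which represents $(-1)^k w_k(X)$ by \remref{WR1}(2), and choose a finitely generated projective $Q$ with $F = P \oplus Q$ free on a basis $\{b_1,\dots,b_r\}$. Starting from $X^{(\ell)} = X$ (noting that $C_{\ell+1}(\tilde X) = 0$ and $H_\ell(\tilde X) = 0$), I would attach at each dimension $j \geq \ell+1$ a copy of $F$-worth of cells whose boundaries follow an alternating ${\rm pr}_P/{\rm pr}_Q$ pattern lifted to $\pi_*$ as in the proof of \proref{PW3}, initialized so that $H_j(\tilde X') = 0$ persists for $j > \ell$. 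Set $X' = \bigcup_j X^{(j)}$ with the weak topology.

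Since all added cells have dimension exceeding $\ell$, the pair $(X',X)$ is $\ell$-connected. Finite domination follows from the argument of \proref{PW4}(d): replacing the inclusion $X^{(j)} \hookrightarrow X'$ by a Serre fibration, its fiber is $(j{-}1)$-connected, so obstruction theory yields a section, hence a domination by the finite subcomplex $X^{(j)}$, for $j$ large enough. Homotopy equivalence of $X'$ with a countable $k$-dimensional complex $L$ is then obtained by adapting the explicit construction of $L$ in the proof of \proref{PW4}(c): one builds $L$ with a $(k{-}1)$-skeleton that is an infinite wedge over $X^{k-1}$ absorbing the swindle, $k$-cells attached by alternating ${\rm pr}_P/{\rm pr}_Q$ maps, and compares chain complexes in low degrees to produce the equivalence $L \fl{\simeq} X'$.

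For uniqueness, given $X_1',X_2'$ I would extend $\id_X$ to $h\:X_1' \to X_2'$ rel $X$ by cell-by-cell obstruction theory on $X_1' \setminus X$: each obstruction lives in cohomology of $(X_1',X)$ with coefficients in a homotopy group of $X_2'$ in a degree exceeding the homotopical dimension $k$ of $X_2'$, hence vanishes. A symmetric argument produces an inverse, and homotopies of the two compositions to the identity are constructed analogously. For the Wall-obstruction identification: since $X'$ is finitely dominated and homotopy equivalent to a $k$-dimensional complex, \remref{WR1}(1) gives $\wa(X') = w_k(X')$; since the inclusion $i\:X \hookrightarrow X'$ is $\ell$-connected and $\ell \geq k$, it is $k$-connected, so \lemref{WL2bb} yields $w_k(X') = i_*(w_k(X))$, completing the identification.

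The delicate point I expect will be calibrating the initial cell attachments at dimension $\ell+1$: the constraint $Z_\ell(\tilde X) = 0$ (coming from $X$ having dimension $\ell$ and $H_\ell(\tilde X) = 0$) forces the first new boundary map into $C_\ell(\tilde X)$ to be zero, so the subsequent alternating ${\rm pr}_P/{\rm pr}_Q$ pattern must be initialized one degree higher to preserve acyclicity of $C_*(\tilde X')$ above $\ell$ while still effecting the swindle that renders $X'$ equivalent to a countable $k$-dimensional complex.
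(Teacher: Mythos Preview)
Your overall plan matches the paper's: apply the construction of \proref{PW3} and \proref{PW4} starting from $X$ to build $X'$, use obstruction theory for uniqueness, and identify $\wa(X')$ via \remref{WR1}(1) and \lemref{WL2bb}. The construction sketch, finite-domination argument, and the $k$-dimensional model $L$ are all in line with the paper.

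There is, however, a genuine gap in your uniqueness argument. You claim the obstructions to extending $\id_X$ to $h\:X'_1\to X'_2$ vanish because they lie ``in a degree exceeding the homotopical dimension $k$ of $X'_2$''. This is not a valid reason: a CW-complex homotopy equivalent to a $k$-dimensional one can have nontrivial homotopy groups $\pi_j$ in \emph{all} degrees $j$, so the coefficient groups $\pi_{j-1}(X'_2)$ need not vanish. The correct argument (as in the paper) uses instead that $X'_1$ is cohomology silent in every degree $\geq k$. Combined with the cohomology silence of $X$ in degrees $k,\dots,\ell$ and $\dim X=\ell$, the long exact sequence of the pair gives $H^{j}(X'_1,X;R)=0$ for every $j\geq \ell+1$ and every coefficient module $R$; since the attached cells all have dimension $\geq\ell+1$, the obstructions (and likewise those for the homotopies) vanish. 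Once $h$ exists, the paper observes directly that it is a homotopy equivalence because $H_j(\tilde X'_1)=0=H_j(\tilde X'_2)$ for $j\geq k$ (\lemref{LW1}), which is quicker than building an explicit inverse.

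A smaller remark on your ``delicate point'': your observation that $Z_\ell(\tilde X)=0$ forces $\partial_{\ell+1}=0$ on the chain level is correct, but note its consequence. Since $C_\ell(\tilde X)\cong B_{\ell-1}(\tilde X)$ is then free, the chain of identities in \remref{Rj>k} forces $w_k(X)=0$ when $X$ is genuinely $\ell$-dimensional and \ccs{k}{\ell}. So the swindle pattern is in fact vacuous in this case (one may take $Q=0$ after stabilizing), and the construction simplifies accordingly.
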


\begin{proof}
Starting with with $X$ we apply the process of the proof of Propositions~\ref{PW3} and~\ref{PW4}, ending with 
the space $X'= X_k$ of \proref{PW4}, which enjoys the required properties. If $X'_1$ and $X'_2$
are two such complexes, as $X'_1$ is \cs in degree $\geq k$, there is no obstruction to extend ${\rm id}_X$ to a map $f\:X'_1\to X'_2$, which will be a homotopy equivalence, since $H_j(\tilde{X'_1})=0=H_j(\tilde{X'_2})$
for $j\geq k$ (by \lemref{LW1}). Finally, the complex $X'$ fulfills the conditions for the Wall finiteness obstruction 
$\wa(X')\in\tilde K_0(\bbz\pi_1(X'))$ to be defined \cite[Theorem F]{WallFin} and the homological algebra definitions 
of $w_k(X)$ and of $\wa(X')$ make them to be equal modulo the identification $\pi_1(X)\approx\pi_1(X')$.  
 \end{proof}

Another consequence of the formally identical  homological algebra definitions 
of $w_k(X)$ and of $\wa(X')$ is the following product formula. For the Wall obstruction, this is a particular
case of the product formula proven in Siebenmann's thesis \cite[Theorem~7.2]{SieTh} (see also \cite[Theorem~3.14, p.~169]{Vara}).

\begin{Proposition}[Product formula]\label{PproductFor}
Let $X$ be a connected CW-space of finite type which is \ccs{k}{\ell} for $k\geq 3$. Let $A$ be a 
connected finite CW-complex of dimension $a\leq \ell-k$. Then $X\times A$ is \ccs{k+a}{\ell}
and the equality
$$
w_{k+a}(X\times A)= \chi(A)\, i_*(w_{k}(X))
$$
holds in $\tilde K_0(\bbz(\pi_1(X\times A)))$, where $\chi(A)$ is the Euler characteristic of $A$ 
and $i_*\:\tilde K_0(\bbz\pi_1(X))\to\tilde K_0(\bbz\pi_1(X\times A))$ 
is induced by the inclusion\\ $i\:\pi_1(X)\to\pi_1(X\times A)\approx \pi_1(X)\times\pi_1(A)$ sending $u\in\pi_1(X)$
to $(u,1)$.
\end{Proposition}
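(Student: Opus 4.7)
My plan is to deduce the formula from Siebenmann's product formula for the Wall finiteness obstruction, exploiting the formally identical homological-algebra definitions of $w_k$ and $\wa$ noted in \proref{Pw-wall}. The argument has two main steps: first, verifying that $X\times A$ is indeed \ccs{k+a}{\ell} so that $w_{k+a}(X\times A)$ is defined; and second, replacing $X$ by a finitely dominated $k$-dimensional CW-space so as to invoke Siebenmann's theorem directly.

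For the silence of $X\times A$, I check the two conditions of \lemref{LW1} at each degree $j\in[k+a,\ell]$. The universal cover is $\widetilde{X\times A}=\tilde X\times\tilde A$, whose cellular chain complex is $C_*(\tilde X)\otimes_\bbz C_*(\tilde A)$. The Künneth formula presents $H_j(\widetilde{X\times A})$ as a sum of tensor terms $H_p(\tilde X)\otimes_\bbz H_q(\tilde A)$ with $p+q=j$ and Tor terms $\mathrm{Tor}_1^\bbz(H_p(\tilde X),H_q(\tilde A))$ with $p+q=j-1$. The tensor terms all have $p\in[k,\ell]$ (since $q\leq a$ and $k+a\leq j\leq\ell$) and thus vanish by silence of $X$; the only Tor term not obviously killed has $p=k-1$ and $q=a$, but $H_a(\tilde A)$ is a subgroup of the free abelian group $C_a(\tilde A)$, hence torsion-free, killing it. Projectivity of $B_{j-1}(\widetilde{X\times A})$ as a $\bbz[\pi\times\rho]$-module (with $\pi=\pi_1(X)$, $\rho=\pi_1(A)$) follows by tensoring the splittings $C_p(\tilde X)\cong B_p(\tilde X)\oplus B_{p-1}(\tilde X)$ (available for $p\in[k,\ell]$ with both summands projective) against $C_*(\tilde A)$ and extracting direct summands of the tensor-product complex.

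For the product formula, I adapt \proref{Pw-wall} to the finite-type setting: starting from a CW-model of $X$ with finite skeleta, I attach finitely many cells in each dimension $>\ell$ (following the inductive procedures of \proref{PW3} and \proref{PW4}) to produce a finitely dominated CW-space $X^+\supseteq X$ with $(X^+,X)$ being $\ell$-connected, with $X^+$ homotopy equivalent to a $k$-dimensional CW-complex, and with $\wa(X^+)=w_k(X)$ in $\tilde K_0(\bbz\pi)$. Then $X^+\times A$ is finitely dominated of homotopy dimension at most $k+a$, so \remref{WR1}(1) gives $\wa(X^+\times A)=w_{k+a}(X^+\times A)$. Siebenmann's product formula \cite[Theorem~7.2]{SieTh}, applied with the finite factor $A$ (for which $\wa(A)=0$), yields $\wa(X^+\times A)=\chi(A)\,i_*(\wa(X^+))=\chi(A)\,i_*(w_k(X))$. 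Finally, the inclusion $X\times A\hookrightarrow X^+\times A$ is $\ell$-connected and hence $(k+a)$-connected since $k+a\leq\ell$, so \lemref{WL2bb} gives $w_{k+a}(X\times A)=w_{k+a}(X^+\times A)$, completing the identification.

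The main obstacle is the adaptation of \proref{Pw-wall} to the finite-type setting, since the proposition is stated only for finite CW-complexes of dimension $\ell$. One must rework the cell-attaching procedures of \proref{PW3} and \proref{PW4} inductively above degree $\ell$, maintaining finite domination of the result and preserving $w_k(X)$ as the Wall obstruction throughout.
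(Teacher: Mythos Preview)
Your approach is correct but more roundabout than the paper's. The paper's proof is a single sentence: the chain-level argument of \cite[pp.~168--170]{Vara} for the Wall product formula applies verbatim, since $w_k$ and $\wa$ share the same homological-algebra definition $(-1)^k[B_{k-1}]$ (\remref{WR1}(2)). No auxiliary space $X^+$ is built; one works directly with $C_*(\tilde X)\otimes_\bbz C_*(\tilde A)$, and both the silence of $X\times A$ and the product formula drop out of the same computation.

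Your reduction via \proref{Pw-wall} is a legitimate alternative, and the obstacle you flag is real: that proposition assumes $X$ finite of dimension exactly~$\ell$. One fix is to replace $X$ by the $(\ell{+}1)$-skeleton of a finite-type model (still \ccs{k}{\ell}, with the same $w_k$) and then run the construction of Propositions~\ref{PW3}--\ref{PW4} above degree~$\ell$. This works, but it repackages precisely the homological algebra that the direct Varadarajan argument already contains, with the overhead of building $X^+$ and citing Siebenmann's theorem as a black box. Your Step~1 sketch is also fine once one notes that the splitting $C_{k-1}(\tilde X)\cong B_{k-1}\oplus T$ from \lemref{LW1} promotes to a chain-complex decomposition $C_*(\tilde X)\cong P_*\oplus Q_*$ with $P_*$ projective in each degree and acyclic in degrees $[k{-}1,\ell]$, while $Q_*$ is concentrated in degrees $\leq k{-}1$; after tensoring with $C_*(\tilde A)$, the boundaries $B_{j-1}$ for $j\in[k{+}a,\ell]$ lie entirely in the $P_*$-part and are visibly projective direct summands.
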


\begin{proof}
The homological algebra's arguments used in \cite[pp.~168--170]{Vara} to prove the same formula for the Wall 
finiteness obstruction work as well to establish the proposition.
\end{proof}

\begin{Corollary}
 Let $X$ be a connected CW-space of finite type which is \ccs{k}{\ell} for $k\geq 3$.
 Then $X\times S^1$ is \cef{k+1}{\ell}. \mancqfd
\end{Corollary}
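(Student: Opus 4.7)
The plan is to apply the product formula (Proposition~\ref{PproductFor}) with $A = S^1$ and then invoke the main equivalence (Proposition~\ref{WP1}) to upgrade the vanishing of the obstruction to a cellfree decomposition.

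First, note that the case $\ell < k+1$ makes the conclusion vacuous, so we may assume $\ell \geq k+1$, i.e. $a = \dim S^1 = 1 \leq \ell - k$. The circle is a connected finite CW-complex of dimension $1$, so the hypotheses of Proposition~\ref{PproductFor} are satisfied with $A = S^1$. Applying it yields that $X \times S^1$ is \ccs{k+1}{\ell} and
\[
w_{k+1}(X \times S^1) = \chi(S^1)\, i_*(w_k(X)).
\]
Since $\chi(S^1) = 0$, we obtain $w_{k+1}(X \times S^1) = 0$ in $\tilde K_0(\bbz\pi_1(X\times S^1))$.

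Finally, $X \times S^1$ is a connected CW-space of finite type (as the product of two such spaces), and $k+1 \geq 4 \geq 3$, so Proposition~\ref{WP1} applies with the pair of indices $(k+1, \ell)$. The vanishing of $w_{k+1}(X\times S^1)$ established above is precisely condition (a) of that proposition, hence condition (b) holds: $X \times S^1$ is \cef{k+1}{\ell}, as required. The argument involves no obstacle beyond correctly lining up the hypotheses; the entire content is packaged in the product formula together with $\chi(S^1) = 0$.
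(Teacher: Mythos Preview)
Your proof is correct and follows exactly the approach the paper intends: the corollary is stated immediately after the product formula with no explicit proof, as it is a direct consequence of applying \proref{PproductFor} with $A=S^1$ (so $\chi(A)=0$) and then invoking \proref{WP1}. Your handling of the edge case $\ell=k$ and the verification of the hypotheses are accurate.
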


For more applications of \proref{PproductFor}, see \proref{CCproductFor}

\section{Antisimplicity}\label{GScasm}

\subsection{Preliminaries}

\begin{ccote}\label{holoc} Homology with local coefficients. \rm 
While cohomology with local coefficients was defined for an arbitrary pair (see~\ref{cohmod}),
homology with local coefficients is defined for a {\it pair over $\rpi$}. A \dfn{space over $\rpi$} is a 
space $X$ together with a (homotopy class of) map $w_X\:X\to\rpi$. A pair $(X,Z)$ is said to be 
\dfn{over $\rpi$} if both $X$ and $Z$ are over $\rpi$ with $\omega_Z$ being the restriction of 
$\omega_X$ to $Z$. 

For example, the space $BO$ is over $\rpi$ by the classifying map
$$
\omega_{BO} : BO \to B{\pi_1(BO)} \simeq B\{\pm 1\} \simeq \rpi \, .
$$
Thus, a stable vector bundle $\eta$ over $X$, classified by $\Phi_\eta\:X\to BO$, makes $X$ a space over $\rpi$
by the map  $\omega_\eta=\omega_{BO}\pcirc \Phi_\eta$, called the \dfn{orientation character of~$\eta$}.
In the case of a (smooth) manifold $M$, the orientation character of its stable normal (or tangent) bundle 
gives the \dfn{orientation character $\omega_M\:M\to\rpi$ of~$M$}. 

When $X$ is a CW-space, the functor $\pi_1$ provides a bijection
$$
[X,\rpi] \fl{\approx}  \hom\big(\pi_1(X),\{\pm 1\}\big)   \ , \  (f\mapsto \pi_1f) \ .    
$$
We use the same notation $\omega_X$ for 
the map $\omega_X$ itself and for its induced homomorphism $\pi_1\omega_X$. 

A group homomorphism $\omega\:\pi\to \{\pm 1\}$ produces an involution $a\mapsto\bar a$ on $\bbz\pi$
defined by 
$$
\overline{\sum_{g\in\pi} n_g g} = \sum_{g\in\pi} \omega(g) n_g g^\mun \ ,
$$
satisfying $\overline{a+b}=\bar a + \bar b$ and $\overline{ab}=\bar b\bar a$. Any left $\bbz\pi$-module $R$, 
may thus be transformed into a right $\bbz\pi$-module $\bar R$ with the action $r a=\bar a r$ for $a\in\bbz\pi$.
Let $(X,Z)$ be a pair of CW-spaces over $\rpi$. Set $(\pi,\omega) = (\pi_1(X),\omega_X)$ and let $R$ be a (left)
$\bbz\pi$-module.
The homology $H_*(X,\bd Z;R)$ is defined as the homology of the 
chain complex 
$$
\overline{C_*(\tilde X,\tilde Z)}\otimes_{\bbz\pi} R \, ,
$$
(see \cite[p.~15]{HVBoo}). 
For example, let $(M,\bd M)$ be a compact manifold pair, with $M$ connected.
When $R=\bbz$ with trivial $\pi_1(M)$-action,
then $H_*(M,\bd M;\bbz)$ is the ordinary homology of $(M,\bd M)$ with integral coefficients when $M$ is orientable,
and that of the orientation cover $(M^{or},\bd M^{or})$  otherwise.
In any case, $H_n(M,\bd M;\bbz)\approx\bbz$. A compact connected manifold of dimension $n$ is always supposed
to be equipped with a generator $[M]$ of $H_n(M,\bd M;\bbz)\approx\bbz$, called the \dfn{fundamental class} of $M$. 
\end{ccote}

\begin{ccote}\label{Pesp} \pe spaces. \rm 
Let $P$ be a finitely dominated CW-space over $\rpi$, together with a class $[P]\in H_n(P;\bbz))$.
We say that $P$ is a \dfn{\pe space of formal dimension $n$} if the the cap product with $[P]$ 
\beq{PeDef1}
 - \frown [P] : H^r(P,Z;R) \fl{\approx}  H_{n-r}(P;R)
\eeq
is an isomorphism for any $\bbz\pi_1(P)$-module $R$.
More generally, let $(P,Q)$ be a pair over $\rpi$ of finitely dominated CW-spaces, together with a  class 
$[P]\in H_n(P,Q))$. We say that $(P,Q)$ is a \dfn{\pe pair of formal dimension $n$} if the following
two conditions hold true 
\begin{itemize}
\item[(a)] the cap product with $[P]$ 
\beq{PeDef1b}
 - \frown [P] : H^r(P,Q;R) \fl{\approx}  H_{n-r}(P;R)
\eeq
is an isomorphism for any $\bbz\pi_1(P)$-module $R$, and
\item[(b)] the space $Q$ together with the class $[Q]=\partial_*([P])$ is a \pe space of formal dimension $n-1$.
 \end{itemize}
 We may also say that $P$ is a \dfn{\pe space} of formal dimension $n$, {\it with boundary $\bd P=Q$}. 
This boundary is then a \pe space without boundary. 

Finally, by a \dfn{\pe cobordism} $(P,Q_1,Q_2)$, we mean a pair $(P,Q)$ of  
finitely dominated CW-spaces with $Q=Q_1\dcup Q_2$, together with a class 
$[P]\in H_n(P,Q)$ such that (a) above is satisfied and such that $Q_i$ is a \pe space with fundamental class 
$[Q_i]$ such that $\partial_*([P])=[Q_1]-[Q_2]$. 

\begin{Remarks}\rm \ 
(1) A compact $n$-manifold pair $(M,\bd M)$ is a \pe space of formal dimension $n$. This follows from the 
standard proof of the \pe-Lefschetz duality using a $\calc^\infty$-triangulation of $M$ 
and its dual cell decomposition.

\nsk{1}
(2) By \cite[Theorem~B]{KleinQin}, Condition (a) is equivalent to

\nsk{2}\hskip 1.8mm\begin{minipage}{10cm}
\begin{itemize}
\item[(a')] the cap product with $[P]$ 
\beq{PeDef2}
 - \frown [P] : H^r(P;R) \fl{\approx}  H_{n-r}(P,Q;R)
\eeq
is an isomorphism for any $\bbz\pi_1(P)$-module $R$.
\end{itemize}
\end{minipage}

Hence, if (a) and/or (a') is true, one has a sign-commutative diagram 
for the (co)homology in a $\bbz\pi_1(P)$-module $R$: 

\sk{1}
\begin{minipage}{80mm}
\footnotesize \hskip -6mm
$
\dia{
H^{j}(P,Q) \ar[r] \ar[d]^{\frown [P]}_\approx & H^{j}(P) \ar[r] \ar[d]^{\frown [P]}_\approx 
& H^j(Q) \ar[r] \ar[d]^{\frown [Q]}  & H^{j+1}(P,Q) \ar[r] \ar[d]^{\frown [P]}_\approx  & H^{j+1}(P)  
\ar[d]^{\frown [P]}_\approx 
\\  
H_{n-j}(P) \ar[r] & H_{n-j}(P,Q) \ar[r] & H_{n-1-j}(Q) \ar[r] &  H_{n-1-j}(P) \ar[r] & H_{n-1-j}(P,Q)
}
$
\end{minipage}\rm\nsk{2}
By the five lemma, $-\frown [Q]\: H^j(Q;R) \to H_{n-1-j}(Q;R)$ is an isomorphism for any $\bbz\pi_1(P)$-module $R$,
but this may be not the case if $R$ is a $\bbz\pi_1(Q)$-module (see \cite[Theorem~A]{KleinQin}). Thus, Conditions (a) and (b) are independent in general. Note however, that (a) implies (b) when $\pi_1(Q)\approx\pi_1(P)$. 
\mancqfd
\end{Remarks}
\end{ccote}

\begin{ccote}\label{coSpivak} Spivak fibrations. \ \rm
Let $BG$ be the classifying space for stable spherical fibration. 
Let $(X,Y)$ be a pair over $BG$, i.e. a pair $(X,Y)$ together
with a map $g\:X\to BG$. Consider the graded abelian group
$$
\calh_*(X,Y) = \pi_n^s(T(\xi),T(\xi|Q))   \quad , \quad T(\, )  = \hbox{ Thom spectrum } \, ,
$$
where $\xi$ is the pull-back by $g$ of the universal stable spherical fibration over $BG$. 
Then $\calh_*$ is a homology theory for spaces over $BG$. 

The natural maps $BG\to\rpi$
makes a space over $BG$ a space over $\rpi$. Therefore, the homology $H_*(X,Y;R)$ with coefficient in a
$\bbz\pi_1(X)$-module $R$, in the sense of~\ref{holoc}, is defined. When $R=\bbz$ with trivial $\pi_1(X)$-action,
one has degree~$0$ graded homorphism 
\beq{deft}
t : \calh_*(X,Y) \to H_*(X,Y;\bbz)
\eeq
obtained by the composition of the Hurewicz map with the Thom isomorphism. 

Let $(P,Q)$ be a \pe pair of formal dimension~$n$. 
A \dfn{Spivak fibration} for $(P,Q)$ is a stable spherical fibration $\xi$ over $P$, classified by a lifting
$P\to BG$ of $\omega_P$, such that there exists a class $(P) \in \calh_n(X,Y)$
satisfying $t((P))=[P]$.
We call $(P)$ a \dfn{Spivak class}. In our context of (finitely dominated) \pe spaces,
Spivak fibrations do exist and, if $\xi_j$ ($j=1,2$) are two
such fibrations with Spivak classes $(P_j)$, there is a fibre homotopy equivalence from $\xi_1$ to $\xi_2$
sending $(P_1)$ to $(P_2)$ \cite[Corollary~3.6]{WallPoinc}. 

\begin{Remark}  \rm \
Although our definitions are inspired by those of \cite{HVBoo}, there are some differences. What we call a  \pe space
is called a {\it PD-space} there. A {\it \pe} space in \cite{HVBoo} is a PD-space thgether with a Spivak class.
The present paper is self-contained with respect to \cite{HVBoo}.
\mancqfd\end{Remark}
\end{ccote}

\begin{ccote}\label{MPfnormal} $\eta$-normal maps.    \ \rm
Let $Y$ be a connected CW-space and let $\eta$ be a stable vector bundle over $Y$, with characteristic map
$\Phi_\eta\: Y\to BO$.

\begin{Definitions}\rm 
(A) A map $q\:Z\to Y$, from a manifold $Z$ to $Y$ is called \dfn{$\eta$-normal} if 
$\Phi_\eta\pcirc q$ classifies the stable normal bundle $\nu_Z$ of $Z$.

\nsk{1}
(B) If $(P,Q)$ is a \pe pair, 
a map $q\:P\to Y$ is called \dfn{$\eta$-normal} if the composite
$$
P \fl{q} Y \fl{\Phi_\eta} BO \to BG
$$
classifies a Spivak fibration for $(P,Q)$.
\end{Definitions}

The following result is well known.

\begin{Lemma}\label{Lnormal}
(1) A map from a compact manifold $M$ to $Y$ which is $\eta$-normal in the sense of (A) 
is $\eta$-normal in the sense of (B) for the pair $(M,\bd M)$.
\nsk{1}
(2) Let $f\:(P',Q')\to (P,Q)$ be a map of degree 1 (i.e. $f_*([P'])=[P]$) between \pe pairs and let 
$q\:P\to Y$ be a map. If the map $q\pcirc f$ is $\eta$-normal, so is the map $q$.
\nsk{1}
(3) If $q\: P\to Y$ is $\eta$-normal for the \pe pair $(P,Q)$, then its restriction to the \pe space $Q$ is 
$\eta$-normal. 
\end{Lemma}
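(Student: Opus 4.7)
All three parts follow from the fact that $t$ of \eqref{deft} is a natural transformation of homology theories for spaces over $BG$, combined with the Pontrjagin--Thom identification of the stable normal data of a manifold with a Spivak class. For (1), embed $(M,\bd M)$ as a neat submanifold of $(D^{n+N},\bd D^{n+N})$ for large $N$, and let $\nu$ denote the stable normal bundle. Let $\gamma$ be the universal stable spherical fibration over $BG$. The Pontrjagin--Thom collapse gives a map of pairs $(D^{n+N},\bd D^{n+N})\to(T(\nu),T(\nu|_{\bd M}))$ whose stable class provides an element $(M)\in\calh_n(M,\bd M)$, the coefficient fibration being $\xi=(\Phi_\eta\pcirc q)^*\gamma$ (which is fibre homotopy equivalent to the spherical fibration of $\nu$, because (A) says $\Phi_\eta\pcirc q$ classifies $\nu_M$). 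The classical computation yields $t((M))=[M]$, so $\xi$ is a Spivak fibration for $(M,\bd M)$, which is~(B).

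For (2), set $\xi=(\Phi_\eta\pcirc q)^*\gamma$ and $\xi'=(\Phi_\eta\pcirc q\pcirc f)^*\gamma=f^*\xi$. By hypothesis $\xi'$ is Spivak, so there exists $(P')\in\calh_n(P',Q')$ with $t((P'))=[P']$. Since $\xi'=f^*\xi$, the map $f$ induces $f_*\:\calh_n(P',Q')\to\calh_n(P,Q)$, and naturality of $t$ together with $\deg f=1$ gives
\[
t(f_*((P')))=f_*(t((P')))=f_*([P'])=[P],
\]
so $f_*((P'))$ is a Spivak class for $(P,Q)$ with coefficient fibration $\xi$; hence $q$ is $\eta$-normal. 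For (3), the connecting homomorphism $\partial\:\calh_n(P,Q)\to\calh_{n-1}(Q)$ of the theory $\calh_*$ is compatible under $t$ with the topological connecting homomorphism, so applied to a Spivak class $(P)\in\calh_n(P,Q)$ it gives
\[
t(\partial(P))=\partial_*(t((P)))=\partial_*([P])=[Q],
\]
and $\partial(P)$ is a Spivak class for $Q$ with coefficient fibration $\xi|_Q$; that is, $q|_{Q}$ is $\eta$-normal.

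\textbf{Main obstacle.} None of the three is deep; the only care required is bookkeeping. The theory $\calh_*$ is parametrised by the map to $BG$, so one must confirm that every pull-back, push-forward, and restriction happens within the correct instance of $\calh_*$ and that $t$ is natural with respect to all of them. Once this is set up, each claim becomes a one-line diagram chase.
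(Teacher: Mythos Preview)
Your proof is correct and follows the same approach as the paper's own proof, only with more detail. The paper disposes of all three parts in one sentence each: for (1) it invokes the Thom--Pontryagin class $(M)_{TP}$ as a Spivak class (your collapse construction spells out exactly what this class is); for (2) it declares $f_*$ of the Spivak class for $(P',Q')$ to be a Spivak class for $(P,Q)$ (you supply the naturality-of-$t$ and degree-one verification behind this); and for (3) it takes $(Q)=\partial_*((P))$ (you verify via naturality of $t$ with respect to the connecting homomorphism that this works).
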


\begin{proof}
The Thom-Pontryagin class $(M)_{TP}$ is a Spivak class for $(M,\bd M)$, which proves (1). For (2), we note that the class $f_*((P))$ is a Spivak class for $(P,Q)$. For (3), we use that the restriction over $Q$ of a Spivak fibration for $(P,Q)$ is a Spivak fibration for $Q$, taking $(Q)=\partial_*((P))$.
\end{proof}

In most of our applications, $Y$ will be a finite complex of dimension $k-1$ and $q$ will be an $Y$-reference map.
If $q$ is $\eta$-normal, we call $q$ an \dfn{$(Y,\eta)$-reference map}.

Let $(P,Q)$ be a \pe pair of formal dimension $n$ and let $q\:P\to Y$ be an $\eta$-normal map. 
The Spivak fibration $\xi$ over $(P,Q)$ is thus endowed with the vector bundle reduction $q^*\eta$.
This corresponds to a surgery problem (normal map of degree one) $\alpha\:(\calm,\bd\calm)\to (P,Q)$, where $\calm$ is a compact $n$-manifold and $q\pcirc\alpha$ is $\eta$-normal.
Such a data determines a surgery obstruction in the Wall group $L_n^p(\pi_1(P),\pi_1(Q),\omega_{P})$,
vanishing if and only if $\alpha$ is normally cobordant to a homotopy equivalence. When $Q=\emptyset$,
We denote by $\sigma(P,q)\in L_n^p(\pi_1(Y),\omega_{\eta})$ the image by $q^*$ of this surgery obstruction.
\end{ccote}

\begin{ccote}\label{PdecNor} $\eta$-normal \pe decompositions. \ \rm
Let $(X_\pm,\bd X_\pm)$ be two CW-pairs with $\bd X_-=Z_-\dcup Z_0$ and $\bd X_+ = Z_0\dcup Z_+$. Form the CW-pair $(X,\bd X)$ where
\beq{decompEx}
X= X_- \cup_{Z_0} X_+  \ , \ \bd X = Z_- \dcup Z_+ \, .
\eeq
For the sake of simplicity, we assume that the homomorphisms $\pi_1(\bd X_\pm)\to \pi_1(X_\pm)$
and $\pi_1(Z_0)\to\pi_1(X_\pm)$ induced by the inclusions are all isomorphisms. 

Let $Y$ be a connected CW-space and let $\eta$ be a stable vector bundle over $Y$. Let $q\:X\to Y$ be a map,
restricting to $q_\pm\:X_\pm\to Y$. These maps make $X$, $\bd X$, $X_\pm$, etc, spaces over $BO$, $BG$ and $\rpi$. 
Consider their homology $H_n()$ with coefficients in $\bbz$ with trivial $\bbz\pi_1(Y)$-action. 
Define $\beta_H\:H_n(X,\bd X) \to H_n(X_-,\bd X_-)\oplus H_n(X_+,\bd X_+)$ by the following diagram
\beq{defbetah}
\dia{ 
H_n(X,\bd X) \ar[r] \ar@/_7mm/[rr]^{\beta_H}
& H_*(X;\bd X\dcup Z_0)  \ar@{<-}[r]^(.39)\approx_(.4){{\rm excis.}} &
 H_n(X_-,\bd X_-)\oplus H_n(X_+,\bd X_+)
}
\eeq

\begin{Definition}\label{PdecNoDef}\rm
We say that \eqref{decompEx} is an \dfn{$\eta$-normal \pe decomposition} if $(X,\bd X)$ and $(X_\pm,\bd X_\pm)$ are \pe pairs of formal dimension~$n$ satisfying $\beta_H([X]) = ([X_-],[X_+])$ and such that $q$ and $q_\pm$ are $\eta$-normal.
\end{Definition}

We now give results to deduce from weaker hypotheses that a decomposition like in~\eqref{decompEx} is an 
$\eta$-normal \pe decomposition. By excision, one has the isomorphism
\beq{deciso2}
H_*(\bd X)  \fl{\approx} H_*(\bd X\dcup Z_0,Z_0) \,.
\eeq
Diagram \eqref{defbetah} and isomorphism \eqref{deciso2} may be considered as well 
for the homology theory $\calh_*$ over $BG$. Therefore, the exact sequences of the
triple $(X,\bd X\dcup Z_0,Z_0)$ and the homomorphism  $t$ of~\eqref{deft} give rise to a morphism of exact sequences
\beq{PdecNor-dia}
\dia{
\calh_n(X,\bd X)  \ar[r]^(.33){\beta_\calh} \ar[d]^t & \calh_n(X_-,\bd X_-)\oplus \calh_n(X_+,\bd X_+) 
\ar[r]^(.68){\partial_\calh} \ar[d]^{t\oplus t} & \calh_{n-1}(Z_0) \ar[d]^t
\\ 
H_n(X,\bd X)  \ar@{>->}[r]^(.33){\beta_H}  & H_n(X_-,\bd X_-)\oplus H_n(X_+,\bd X_+) \ar[r]^(.68){\partial_H} & 
H_{n-1}(Z_0)
}
\eeq

\begin{Proposition}\label{PPdecNor}
Let $(X)\in \calh_n(X,\partial X)$ such that $\beta_\calh((X))=((X_-),(X_+))$.

(i) Suppose that $(X_\pm,\partial X_\pm)$ are \pe pairs of formal dimension $n$ with $[X_\pm]=t((X_\pm))$ (in other words, $q_\pm$ is $\eta$-normal and $(X_\pm)$ are Spivak clases). Then $(X,\partial X)$ is a \pe pair of formal dimension $n$ with $[X]=t((X))$. 

(ii) Suppose that $(X,\bd X)$ and $(X_-,\bd X_-)$ are \pe pairs of formal dimension~$n$ with $[X]=t((X))$
and $[X_-]=t((X_-))$. Then $(X_+,\bd X_+)$ is a \pe pair of formal dimension~$n$ with $[X_+]=t((X_+))$.

Consequently, in both cases, \eqref{decompEx} is $\eta$-normal \pe decomposition.
\end{Proposition}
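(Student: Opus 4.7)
The approach is a five-lemma argument on a Mayer-Vietoris ladder comparing cohomology to homology via cap products with fundamental classes; let $\pi=\pi_1(X)$.

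First I would set the candidate fundamental classes: in Part~(i) put $[X]:=t((X))$, and in Part~(ii) put $[X_+]:=t((X_+))$. Commutativity of~\eqref{PdecNor-dia} together with the injectivity of $\beta_H$ shown there yields $\beta_H([X])=([X_-],[X_+])$ in both cases. The \pe hypotheses on the ambient pairs force, via condition~(b) of~\ref{Pesp} applied to the disjoint unions involved, that each of $Z_-$, $Z_+$, $Z_0$ is a \pe space of formal dimension $n-1$. In particular $\bd X=Z_-\dcup Z_+$ is \pe in Part~(i), and $\bd X_+=Z_0\dcup Z_+$ is \pe in Part~(ii); this already discharges condition~(b) of~\ref{Pesp} for the new pair in each case.

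For the duality itself, the triad $(X;X_-,X_+)$ with $X_-\cap X_+=Z_0$ and $\bd X=Z_-\dcup Z_+$ gives the Mayer-Vietoris short exact sequence of $\bbz\pi$-chain complexes
$$
0 \to C_*(\tilde Z_0) \to C_*(\tilde X_-,\tilde Z_-)\oplus C_*(\tilde X_+,\tilde Z_+) \to C_*(\tilde X,\widetilde{\bd X}) \to 0,
$$
whose cohomological long exact sequence in coefficients $R$ forms the top row of the ladder. For the bottom row I would use the long exact sequence of the pair $(X,Z_0)$ combined with the chain-level splitting $C_*(\tilde X,\tilde Z_0)\cong C_*(\tilde X_-,\tilde Z_0)\oplus C_*(\tilde X_+,\tilde Z_0)$. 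Cap products with $[Z_0]$, $[X]$ and $[X_\pm]$ connect these two rows; the arrows $-\frown [Z_0]\: H^j(Z_0;R)\to H_{n-1-j}(Z_0;R)$ and $-\frown [X_\pm]\: H^j(X_\pm,Z_\pm;R)\to H_{n-j}(X_\pm,Z_0;R)$ are isomorphisms by \pe duality on $Z_0$ and \pe-Lefschetz duality on $(X_\pm,\bd X_\pm)$ respectively.

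The five lemma then finishes. In Part~(i), every vertical arrow not involving $[X]$ is an isomorphism by the above, so $-\frown[X]\: H^j(X,\bd X;R)\to H_{n-j}(X;R)$ is an isomorphism for every $R$; this shows $(X,\bd X)$ is a \pe pair with fundamental class $t((X))$, hence $(X)$ is a Spivak class and $q$ is $\eta$-normal. In Part~(ii), the vertical arrows attached to $(X,\bd X)$, $(X_-,\bd X_-)$ and $Z_0$ are isomorphisms by hypothesis, and the five lemma concludes that $-\frown[X_+]$ is an isomorphism, so $(X_+,\bd X_+)$ is \pe with fundamental class $t((X_+))$ and $q_+$ is $\eta$-normal. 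The main technical obstacle I expect is the sign-commutativity of the ladder at its connecting homomorphisms; I would deduce it from naturality of the cap product along the Mayer-Vietoris SES above, together with the standard identification of the \pe-Lefschetz boundary with the cellular connecting map. Once that naturality is in place, the diagram chase itself is straightforward.
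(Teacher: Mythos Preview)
Your five-lemma argument on the Mayer--Vietoris ladder is exactly what underlies the reference the paper cites (Wall, \emph{Poincar\'e complexes I}, Theorem~2.1 and its addendum): the paper simply outsources the duality statement to that source rather than writing out the ladder. So the strategy is the same, and your treatment of the Spivak classes via the left square of Diagram~\eqref{PdecNor-dia} matches the paper's as well.

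There is one genuine omission. In this paper a \pe pair is by definition \emph{finitely dominated} (see~\ref{Pesp}), and your argument never verifies that the newly assembled pair is finitely dominated. In Part~(i) you must check that $X$ is finitely dominated given that $X_\pm$ and $Z_0$ are; in Part~(ii) you must check that $X_+$ is finitely dominated given that $X$, $X_-$ and $Z_0$ are. Neither is automatic from the chain-level Mayer--Vietoris data: it requires a sum/complement theorem for finite domination. The paper handles this by citing Siebenmann's thesis \cite[Complement~6.6]{SieTh}. You should insert this step before invoking the five lemma; without it, the conclusion ``$(X,\partial X)$ is a \pe pair'' (resp.\ ``$(X_+,\partial X_+)$ is a \pe pair'') is not established in the sense used throughout the paper.

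A smaller point: the cobordism duality $-\frown[X_\pm]\colon H^j(X_\pm,Z_\pm;R)\to H_{n-j}(X_\pm,Z_0;R)$ that you use as one of the known vertical isomorphisms does not come directly from condition~(a) of~\ref{Pesp}; it is the Lefschetz duality for the triad $(X_\pm;Z_\pm,Z_0)$, which itself follows from (a) and (a$'$) by a preliminary five-lemma step. This is standard and you flag it implicitly, but it is worth saying explicitly since it is a second application of the same mechanism.
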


\begin{proof}
The requirements about finite dominations are guaranteed by \cite[Complement~6.6]{SieTh}.
The \pe duality statements  are proven in \cite[Theorem~2.1 and its addendum]{WallPoinc}.
The assertions about the Spivak classes are then deduced from the commutativity of the left square in Diagram~\eqref{PdecNor-dia}, using for~(i) that $\beta_H$ is injective. 
\end{proof}

\begin{Corollary}\label{CPdecNor}
Let $(X_\pm)\in\calh_n(X_\pm)$ such that $ \partial_\calh((X_-),(X_+))=0$.  
Suppose that $(X_\pm,\bd X_\pm)$ are \pe pairs of formal dimension $n$ with $[X_\pm]=t((X_\pm))$.
Then $(X,\bd X)$ is a \pe pair of formal dimension~$n$ and $q$ is $\eta$-normal.
Consequently, \eqref{decompEx} is $\eta$-normal \pe decomposition.
\end{Corollary}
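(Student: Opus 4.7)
My plan is to reduce the corollary to \proref{PPdecNor}(i) by manufacturing the only missing ingredient, namely a class $(X)\in\calh_n(X,\bd X)$ satisfying $\beta_\calh((X))=((X_-),(X_+))$. Once such an $(X)$ is available, \proref{PPdecNor}(i) simultaneously yields that $(X,\bd X)$ is a \pe pair of formal dimension $n$ and that $[X]:=t((X))$ is its fundamental class; the remaining clauses of Definition~\ref{PdecNoDef} then fall out by a short diagram chase.

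To produce $(X)$, I would recognize the top row of Diagram~\eqref{PdecNor-dia} as a segment of the long exact sequence of the triple $(X,\bd X\dcup Z_0,\bd X)$ in the generalized homology theory $\calh_*$, after the excision identification $\calh_n(X,\bd X\dcup Z_0)\approx \calh_n(X_-,\bd X_-)\oplus\calh_n(X_+,\bd X_+)$ (exactly as was done for $H_*$ in~\eqref{defbetah}) and the excision identification $\calh_{n-1}(\bd X\dcup Z_0,\bd X)\approx\calh_{n-1}(Z_0)$ (the $\calh_*$-analogue of~\eqref{deciso2}, reflecting that $Z_0$ and $\bd X$ are disjoint in $\bd X\dcup Z_0$). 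Exactness at the middle term, together with the hypothesis $\partial_\calh((X_-),(X_+))=0$, yields the desired lift $(X)\in\calh_n(X,\bd X)$ with $\beta_\calh((X))=((X_-),(X_+))$.

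Applying \proref{PPdecNor}(i) then upgrades $(X,\bd X)$ to a \pe pair of formal dimension $n$ with fundamental class $[X]=t((X))$. Commutativity of the left square of Diagram~\eqref{PdecNor-dia} immediately gives $\beta_H([X])=(t\oplus t)\pcirc\beta_\calh((X))=([X_-],[X_+])$, which is the fundamental-class compatibility required in Definition~\ref{PdecNoDef}. Finally, the stable spherical fibration over $X$ that defines $\calh_n(X,\bd X)$ is, by construction, the pullback of the universal one along the composite $X\fl{q}Y\fl{\Phi_\eta}BO\to BG$; the existence of a class $(X)$ with $t((X))=[X]$ then makes this fibration a Spivak fibration for $(X,\bd X)$ endowed with the vector bundle reduction $q^*\eta$, so $q$ is $\eta$-normal. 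I do not expect a genuine obstacle here: \proref{PPdecNor} carries the technical weight, and this corollary is essentially the observation that exactness of one sequence supplies its hypothesis.
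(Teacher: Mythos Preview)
Your proposal is correct and follows essentially the same approach as the paper: use exactness of the top row of Diagram~\eqref{PdecNor-dia} together with the hypothesis $\partial_\calh((X_-),(X_+))=0$ to lift $((X_-),(X_+))$ to a class $(X)\in\calh_n(X,\bd X)$, then invoke \proref{PPdecNor}(i). The paper's proof is more terse (two sentences), while you spell out why the top row is exact and check the remaining clauses of Definition~\ref{PdecNoDef} explicitly; the paper also remarks that $[X]$ is unique since $\beta_H$ is injective, which you might add for completeness.
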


\begin{proof}
Since the top line in \eqref{PdecNor-dia} is exact, there is a (possibly non-unique) class  
$(X)\in \calh_n(X,\partial X)$ such that $\beta_\calh((X))=((X_-),(X_+))$. The corollary then follows from
Part (i) of \proref{PPdecNor} (note that the fundamental class $[X]$ is unique since $\beta_H$ is injective).
\end{proof}
\end{ccote}

Part (ii) of \proref{PPdecNor} may be shortened in the following

\begin{Corollary}\label{CPdecNor2}
Suppose that $(X,\bd X)$ and $(X_-,\bd X_-)$ are \pe pairs of formal dimension~$n$ and that 
$q$ and $q_-$ are $\eta$-normal. Then $(X_+,\bd X_+)$ is a \pe pairs of formal dimension~$n$ and 
$q_+$ is $\eta$-normal. In other words, \eqref{decompEx} is an $\eta$-normal \pe decomposition.
\mancqfd
\end{Corollary}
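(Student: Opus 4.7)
The plan is to reduce the corollary to Part (ii) of \proref{PPdecNor} by producing compatible Spivak classes on $(X,\bd X)$ and $(X_-,\bd X_-)$ whose image under $\beta_\calh$ is a pair.

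First I would use the $\eta$-normality of $q$ on the \pe pair $(X,\bd X)$ to pick a Spivak class $(X)\in\calh_n(X,\bd X)$ with $t((X))=[X]$. Then I set $((X_-),(X_+)):=\beta_\calh((X))$, which provides classes $(X_\pm)\in\calh_n(X_\pm,\bd X_\pm)$ by definition. The commutativity of the left square in diagram~\eqref{PdecNor-dia} yields $\beta_H([X])=(t((X_-)),t((X_+)))$, so the natural candidates for fundamental classes on the two pieces are $[X_\pm]:=t((X_\pm))$.

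Next I have to identify $t((X_-))$ with the fundamental class of the \pe pair $(X_-,\bd X_-)$ that is already given by hypothesis. Because the inclusion $\pi_1(X_-)\hookrightarrow\pi_1(X)$ is an isomorphism by the standing assumption and $(X_-,\bd X_-)$ is a \pe pair, $H_n(X_-,\bd X_-)\approx\bbz$, so any two admissible fundamental classes differ only by a sign. The assumed $\eta$-normality of $q_-$ guarantees that \emph{some} Spivak class for $(X_-,\bd X_-)$ exists, so after normalizing the chosen fundamental class of $(X_-,\bd X_-)$ to be $t((X_-))$ (reversing sign if necessary), the class $(X_-)$ itself becomes a Spivak class for $(X_-,\bd X_-)$.

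Having arranged both hypotheses $[X]=t((X))$ and $[X_-]=t((X_-))$ of Part (ii) of \proref{PPdecNor}, I invoke that proposition to conclude that $(X_+,\bd X_+)$ is a \pe pair of formal dimension $n$ with $[X_+]=t((X_+))$. In particular $(X_+)$ is a Spivak class, so $q_+$ is $\eta$-normal. Together with the equality $\beta_H([X])=([X_-],[X_+])$ obtained above, every clause of Definition~\ref{PdecNoDef} is verified, hence \eqref{decompEx} is an $\eta$-normal \pe decomposition. The main delicate point is the orientation normalization in the second step: one must check that the fundamental classes on the pieces can be coherently chosen so as to match the components of $\beta_H([X])$, which is a standard but necessary piece of bookkeeping in \pe duality.
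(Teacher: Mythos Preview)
Your proof is correct and follows exactly the route the paper intends: the corollary is presented there as an immediate ``shortening'' of Part~(ii) of \proref{PPdecNor}, with no written proof, and your argument makes explicit the choice of the Spivak class $(X)$ from the $\eta$-normality of $q$, the definition $((X_-),(X_+)):=\beta_\calh((X))$, and the invocation of Part~(ii). You even go further than the paper by flagging the orientation-compatibility issue between $t((X_-))$ and the given $[X_-]$; the paper silently assumes this compatibility (and in every application the fundamental class of $X_-$ is the one inherited from $[X]$ via $\beta_H$, so the issue does not arise in practice).
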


\begin{ccote}\label{coThick} Stable $\eta$-thickenings. \ \rm
Let $Y$ be a connected finite CW-complex of dimension $k-1\geq 2$ and let $\eta$ be a stable vector bundle over $Y$.
An $(Y,\eta)$-referred manifold $(N,g)$ is called an \dfn{$\eta$-thickening of $Y$ of dimension $n$} if 
\begin{itemize}
\item $N$ is a compact connected manifold of dimension $n$ and $g$ is a simple homotopy equivalence;
\item the pair $(N,\bd N)$ is ($n-k$)-connected (with $n-k\geq 2$). 
\end{itemize}
(Recall that $\dim Y = k-1$.) Two $\eta$-thickenings  $(N_1,g_1)$ and $(N_2,g_2)$ of $Y$ are regarded as 
equivalent if there is a degree-one diffeomorphism $h\: N_1\fl{\approx} N_2$ such that $g_2\pcirc h$ is homotopic of $g_1$. 
In the {\it stable range} $n>2(k-1)$, one has the following 

\begin{Proposition}[Wall]\label{PWallThick}
Let $Y$ be a connected finite cell complex of dimension $k-1$ and
let $\eta$ be a stable vector bundle over $Y$. Then, if $n>2(k-1)\geq 6$, there is an unique equivalence class
$\Xi_n(Y,\eta)$ of $\eta$-thickenings of $Y$ in dimension~$n$. \mancqfd
\end{Proposition}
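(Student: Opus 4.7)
The plan is to establish existence and uniqueness of $\eta$-thickenings by induction on the cell structure of $Y$, using the stable--range hypothesis $n>2(k-1)$ throughout.

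First, for existence, I would build $N$ as a smooth handlebody mirroring a CW--decomposition of $Y$. Start with $N^{0}$, a disjoint union of $n$--disks, one per $0$--cell of $Y$, and an obvious simple homotopy equivalence $g^{0}\:N^{0}\to Y^{0}$. Inductively, suppose $(N^{j-1},g^{j-1})$ is a compact $\eta|_{Y^{j-1}}$--thickening with $g^{j-1}$ a simple homotopy equivalence and $(N^{j-1},\bd N^{j-1})$ highly connected. For each $j$--cell ($j\leq k-1$) with attaching map $\varphi\:S^{j-1}\to Y^{j-1}$, use a homotopy inverse of $g^{j-1}$ to push $\varphi$ into $\bd N^{j-1}$; since $2(j-1)<n-1$, by general position and the Whitney trick this map is homotopic to an embedding $S^{j-1}\hookrightarrow \bd N^{j-1}$. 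The stable bundle $\eta$ supplies the normal framing needed to widen this embedding to a $j$--handle $D^{j}\times D^{n-j}$. Attaching such a handle for each $j$--cell produces $(N^{j},g^{j})$; I would check inductively that $g^{j}$ remains a simple homotopy equivalence (the handle attachment reproduces the cell attachment on mapping cylinders) and that the relative connectivity of $(N^{j},\bd N^{j})$ is preserved by a Mayer--Vietoris argument. After $j=k-1$ steps, set $N=N^{k-1}$; the condition $n-k\geq 2$ is then where $n>2(k-1)$ is used again to ensure $(N,\bd N)$ is $(n-k)$--connected.

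For uniqueness, given two $\eta$--thickenings $(N_{0},g_{0})$ and $(N_{1},g_{1})$, I would run the same construction one dimension higher, applied to a CW--decomposition of $Y$ and to the stable bundle $\eta\oplus\varepsilon^{1}$, in such a way that the two ends of the resulting $(n+1)$--dimensional $(\eta\oplus\varepsilon^{1})$--thickening $W$ are identified with $N_{0}$ and $N_{1}$. Concretely, this amounts to a relative version of the existence step: given the $(j-1)$--handle decompositions of $N_{i}$, extend each of them to a $j$--handlebody on $N_{0}\cup N_{1}$ by attaching $j$--handles to a cylinder whose attaching data are jointly determined by the two embeddings into $\bd N_{i}^{j-1}$, which agree up to isotopy in the stable range. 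The cobordism $W$ comes equipped with a simple homotopy equivalence to $Y$, hence to $N_{i}$, so $W$ is an $s$--cobordism from $N_{0}$ to $N_{1}$ rel boundary. Since $n\geq 6$, the $s$--cobordism theorem yields a diffeomorphism $h\:N_{0}\fl{\approx}N_{1}$ with $g_{1}\pcirc h\simeq g_{0}$, which is the required equivalence.

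The main obstacle is the uniqueness step: one must check that at each inductive stage, two embeddings $S^{j-1}\hookrightarrow \bd N^{j-1}$ representing the same homotopy class are ambient isotopic, and that the framings induced by $\eta$ differ by an isotopy of framings. Both assertions rely squarely on the hypothesis $n>2(k-1)$: the first via the Whitney trick for removing double points (needing $2(j-1)+1<n-1$), the second via $\pi_{j-1}(O)\to\pi_{j-1}(O/O(n-j))$ being an isomorphism in the stable range. Once these are in hand, the relative handlebody $W$ interpolating between $N_{0}$ and $N_{1}$ exists, and the $s$--cobordism theorem delivers the uniqueness.
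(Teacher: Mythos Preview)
Your approach is genuinely different from the paper's and more ambitious. The paper simply invokes Wall's classification of stable thickenings \cite[Proposition~5.1]{WallThick}, which says that in the stable range thickenings are classified by their stable tangent (equivalently, normal) bundle; existence and uniqueness of an $\eta$-thickening then follow in one line. What you outline is essentially a direct reproof of Wall's theorem by handle theory, and the existence half is fine as sketched. For uniqueness, however, your inductive cobordism argument presupposes that an \emph{arbitrary} $\eta$-thickening $N_i$ (defined only by the conditions ``$g_i$ is a simple homotopy equivalence'' and ``$(N_i,\bd N_i)$ is $(n-k)$-connected'') already carries a handle decomposition mirroring the CW structure of $Y$. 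This is true, but it requires a separate argument via handle cancellation using the connectivity of $(N_i,\bd N_i)$; you should say so.

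More importantly, you miss the one point that the paper's proof actually addresses beyond the citation. The paper's notion of equivalence demands a \emph{degree-one} diffeomorphism $h\:N_0\to N_1$ with $g_1\pcirc h\simeq g_0$. Wall's theorem (or your $s$-cobordism argument) produces a diffeomorphism, but not a priori one of degree~$+1$. The paper fixes this by observing that a stable thickening $N$ is of the form $N_-\times I$ (see \remref{RBdThick}), hence admits a self-diffeomorphism of degree~$-1$ (swap the $I$-factor), so one can correct the sign. Your proposal should include this step.
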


A representative $(N,g)$ of $\Xi_n(Y,\eta)$ is called a (or sometimes {\it the}) \dfn{stable $\eta$-thickening 
of $Y$ of dimension $n$}.

\begin{proof}
This follows from Wall's classification of stable thickenings by their stable tangent bundle
\cite[Proposition~5.1]{WallThick}, whence by their stable normal bundle. More precisely for the uniqueness, 
Wall's theorem provides a diffeomomorphism between any two stable thickenings, which may be not of degree one.
But a stable thickeninbg $N$ is of the form $N_-\times I$ (see \remref{RBdThick} below) and thus $N$ admits a 
self-diffeomorphism of degree $-1$. 
\end{proof}

\begin{Remark}\label{RBdThick}\rm 
When $n>2k-1\geq 6$, the boundary $\bd N$ of a stable $\eta$-thickening $(N,g)$ is a \as{k} manifold 
(See Definition~B in the introduction). 
Indeed, let $\gamma_0\:Y\to N$ be a homotopy inverse of $g$. As $(N,\bd N)$ is ($n-k$)-connected and 
$n-k>k-1$, the map $\gamma_0$ is homotopic to a map $\gamma\:Y\to \bd N$. 
Up to homotopy equivalence, one may assume that $Y$ is a finite simplicial complex and that $\gamma$ is an embedding
into a $\calc^\infty$-triangulation of $\bd N$. Then, $\gamma(Y)$ has a smooth regular neighborhood $N_-$ \cite{HirschNhb}
and $(N_-,g_{|N_-})$ is a stable $\eta$-thickening of $Y$ of dimension $n-1$.  
By the s-cobordism theorem, the manifold $N$ is diffeomorphic to $N_-\times [0,1]$ and one thus gets
a manifold triad $(N,N_-\cup \bd N_-\times [0,1],N_-\times\{1\})$.
But the pair $(N_-,\bd N_-)$ is $(n-k-1)$-connected. 
By the procedure of eliminating handles (see \cite{KeBMS}), $N_-\times \{0\}$ and $N_-\times\{1\}$ both admit a handle decomposition with handles of index $\leq k-1$. Therefore, $\bd N$ is \as{k}. 
\mancqfd
\end{Remark}

The uniqueness of the Spivak class admits a different form for stable thickenings. 

\begin{Lemma}\label{LSpiThick}
Let $(N_j,g_j)$ ($j=1,2$) be two stable $\eta$-thickenings of $Y$ of dimension $n>2(k-1)\geq 6$, 
with Spivak class $(N_j)\in \calh_n(N_j,\bd N_j)$.
Then there exists a simple homotopy equivalence $f\:(N_1,\bd N_1)\to (N_2,\bd N_2)$ satisfying $g_1\simeq f\pcirc g_2$
and $\calh_*f((N_1))=(N_2)$.
\end{Lemma}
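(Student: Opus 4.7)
The plan is to take $f$ to be the diffeomorphism supplied by Wall's classification of stable thickenings and then verify compatibility of the Spivak classes via Thom--Pontryagin naturality. First, \proref{PWallThick} and the remark at the end of its proof give a degree-one diffeomorphism $h\:N_1\to N_2$ with $g_2\pcirc h\simeq g_1$. Setting $f=h$, I obtain a simple homotopy equivalence of pairs (diffeomorphisms have vanishing Whitehead torsion) satisfying $g_1\simeq g_2\pcirc f$.

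For the Spivak classes, \lemref{Lnormal}(1) says that the Thom--Pontryagin class $(N_j)_{\rm TP}\in\calh_n(N_j,\bd N_j)$ of the stable normal bundle $\nu_{N_j}$ is a Spivak class for the $\eta$-normal map $g_j$, with $t((N_j)_{\rm TP})=[N_j]$. Because $g_2\pcirc f\simeq g_1$ and each $g_j$ is $(Y,\eta)$-normal, $f$ is covered by a stable bundle isomorphism $\nu_{N_1}\cong f^*\nu_{N_2}$; the naturality of the Pontryagin--Thom collapse under such a bundle map yields $\calh_* f\bigl((N_1)_{\rm TP}\bigr)=(N_2)_{\rm TP}$. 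To match the prescribed classes $(N_j)$, I invoke the uniqueness statement recalled in \ref{coSpivak}: any two Spivak classes for the same Spivak fibration differ by a fibre homotopy self-equivalence of that fibration. Any discrepancy between $(N_j)$ and $(N_j)_{\rm TP}$ can therefore be absorbed by composing $f$ with a suitable self-homotopy equivalence of $(N_1,\bd N_1)$, which in the stable range $n>2(k-1)$ may be arranged to preserve the reference map $g_1$.

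The main obstacle is precisely this last identification: tracking the given Spivak classes in place of the canonical Thom--Pontryagin ones, and realizing the necessary fibre-homotopy adjustment by a self-equivalence of the thickening compatible with $g_1$. Everything else reduces to an application of \proref{PWallThick} and the formal naturality of the Thom--Pontryagin construction under degree-one diffeomorphisms covered by bundle isomorphisms of normal bundles.
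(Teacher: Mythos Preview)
Your argument has a genuine gap in its final step, which you yourself flag as ``the main obstacle''.  The uniqueness statement in~\ref{coSpivak} produces a \emph{fibre homotopy self-equivalence of the spherical fibration} over $N_j$ carrying one Spivak class to another; it does not produce a self-homotopy-equivalence of the pair $(N_j,\bd N_j)$.  There is no general mechanism by which a fibre homotopy self-equivalence of $\xi\to N_j$ is induced by, or can be realized as, a self-map of the base compatible with $g_j$.  Your sentence ``in the stable range $n>2(k-1)$ may be arranged to preserve the reference map $g_1$'' is an assertion without justification, and in fact this is exactly the difficulty the lemma is designed to overcome.

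The paper sidesteps this issue by reversing the order of operations.  Rather than taking the Wall diffeomorphism between $N_1$ and $N_2$ and then trying to correct the Spivak classes, it first uses each prescribed class $(N_j)$ to set up a surgery problem $\alpha_j\:(\caln_j,\bd\caln_j)\to(N_j,\bd N_j)$ (the Spivak class together with the vector-bundle reduction $g_j^*\eta$ determines a degree-one normal map).  By Wall's $(\pi\hbox{-}\pi)$ theorem one may take $\alpha_j$ to be a simple homotopy equivalence, and by construction $\calh_*\alpha_j((\caln_j)_{TP})=(N_j)$.  The pairs $(\caln_j,g_j\pcirc\alpha_j)$ are again stable $\eta$-thickenings of $Y$, so \proref{PWallThick} gives a degree-one diffeomorphism $h\:\caln_1\to\caln_2$ respecting the reference maps; a diffeomorphism automatically carries Thom--Pontryagin class to Thom--Pontryagin class.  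Then $f=\alpha_2\pcirc h\pcirc\alpha_1^{-1}$ sends $(N_1)$ to $(N_2)$ by construction, with no need for any after-the-fact adjustment.
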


\begin{proof}
The data $(N_j,g_j,(N_j))$ determine surgery problems $\alpha_j\:(\caln_j,\bd\caln_j)\to (N_j,\bd N_j)$ such that
$\calh_*\alpha_j((\caln_j)_{TP})=(N_j)$, where $(\caln_j)_{TP}\in\calh_n(\caln_j,\bd\caln_j)$ is 
the Thom-Pontryagin class of $\caln_j$. By Wall's $(\pi-\pi)$-theorem, one may suppose that $f_j$ are 
simple homotopy equivalences. Hence, $(\caln_j,g_j\pcirc f_j)$ are stable $\eta$-thickenings of $Y$. 
By \proref{PWallThick}, there is a diffeomorphism $h\:\caln_1\to\caln_2$ such that $g_2\pcirc\alpha_2\pcirc h$ is homotopic to $g_1\pcirc\alpha_1$. Being a diffeomorphism of degree one, $h$ satisfies $\calh_*h((\caln_1)_{TP})=(\caln_2)_{TP}$.
Therefore the map $f=\alpha_2\pcirc h\pcirc\alpha_1'$, where $\alpha_1'$ is a homotopy inverse of $\alpha_1$, satisfies the conclusion of \lemref{LSpiThick}. 
\end{proof}
\end{ccote}

\begin{ccote}\label{endsMan} Ends of manifolds. \ \rm
A general reference for ends of spaces is \cite{RanEnds}.
We restrict to the case of a $\sigma$-compact finitely dominated open manifolds $U$ with one end $\epsilon$. 
Such an end is called \dfn{tame} \cite[Definition~10, p.~xiii]{RanEnds} if it admits a
sequence $U\supset Z_1 \supset Z_2 \supset \cdots $ of finitely dominated closed neighborhoods with
$$
\bigcap_{j=1}^\infty Z_j = \emptyset \ , \ \pi_1(Z_1)\approx \pi_1(Z_2) \approx  \cdots \approx \pi_1(\epsilon)  \, .
$$

Recall that, according to our definition in~\ref{Pesp}, a \pe pair is finitely dominated.
\begin{Proposition}\label{Pcompletion}
Let $U$ be a connected $\sigma$-compact finitely dominated $n$-dimensional manifold with compact boundary $\bd U$ and with one end $\epsilon$, which is tame and satisfies $\pi_1(\epsilon)\approx \pi_1(U)$. 
Set $\calp=\wa(U)\in \tilde K_0(\pi_1(U),\omega_U)$.
Then there exists a \pe cobordism $(\bbu,\bd U,\bbp)$, 
with $\pi_1(\bbp)\approx\pi_1(\bbu)$, together with a homotopy equivalence $\alpha\:U\to\bbu$.
Moreover, with the identification
$\pi_1(\bbp)\approx\pi_1(\bbu)\approx\pi_1(U)$ obtained using $\alpha$, the formulae
$$
\wa(\bbu)=\calp \ \hbox{ and } \ \wa(\bbp) = \calp + (-1)^{n}\calp^*
$$
hold true in $\tilde K_0(\pi_1(U),\omega_U)$.
\end{Proposition}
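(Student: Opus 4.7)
The plan is to Poincaré-complete the tame end $\epsilon$ via the ends-of-spaces machinery of \cite{PeRa, RanEnds}. I would first use tameness to choose a cofinal sequence $U\supset Z_1\supset Z_2\supset\cdots$ of finitely dominated closed neighborhoods of $\epsilon$, all with $\pi_1(Z_j)\approx\pi_1(U)$. The manifold Poincaré--Lefschetz duality on the compact pieces $\overline{U\setminus Z_j}$, passed to the limit, endows the pro-chain complex $\{C_*(\tilde Z_j)\}$ with a \pe structure ``at infinity,'' and Ranicki's realization theorems then produce a finitely dominated \pe space $\bbp$ of formal dimension $n-1$ with $\pi_1(\bbp)\approx\pi_1(U)$, together with natural ``completion'' maps $Z_j\to\bbp$ for $j$ large.

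Next I would build $\bbu$ by ``gluing $\bbp$ at infinity'': cut off some tail $Z_j$ and replace it by the mapping cylinder of the collapse $Z_j\to\bbp$ produced above. The resulting space carries a natural homotopy equivalence $\alpha\:U\to\bbu$ (collapsing the cylinder). To certify that $(\bbu,\partial U,\bbp)$ is a \pe cobordism of formal dimension $n$, I would decompose $\bbu$ as the union of the compact manifold $\overline{U\setminus Z_j}$ and the mapping cylinder --- two \pe pairs matching along $\partial Z_j$ --- and apply \proref{PPdecNor} together with \corref{CPdecNor}. This furnishes a fundamental class $[\bbu]\in H_n(\bbu,\partial U\dcup\bbp)$ with $\partial[\bbu]=[\partial U]-[\bbp]$.

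For the Wall obstructions, homotopy invariance gives $\wa(\bbu)=\wa(U)=\calp$. For $\bbp$, I would apply additivity of $\wa$ to the short exact sequence of $\bbz\pi$-chain complexes
\[
0\to C_*(\widetilde{\partial U})\oplus C_*(\tilde\bbp)\to C_*(\tilde\bbu)\to C_*(\tilde\bbu,\widetilde{\partial\bbu})\to 0,
\]
yielding $\wa(\bbp)+\wa(\bbu,\partial\bbu)=\wa(\bbu)$ (the term $\wa(\partial U)$ vanishes since $\partial U$ is a closed manifold). Poincaré duality for the pair $(\bbu,\partial\bbu)$ provides a chain equivalence $C^{n-*}(\tilde\bbu)\simeq C_*(\tilde\bbu,\widetilde{\partial\bbu})$; unwinding the involution on $\tilde K_0(\bbz\pi,\omega_U)$ translates this into $\wa(\bbu,\partial\bbu)=-(-1)^{n}\calp^*$. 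Combining these identities delivers $\wa(\bbp)=\calp+(-1)^{n}\calp^*$.

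The main obstacle is the first step: in general $\epsilon$ admits no manifold completion --- its Siebenmann obstruction is precisely $\calp$ --- so $\bbp$ must be constructed in the enlarged \pe category, with a Poincaré structure rigid enough to be glued consistently to the manifold interior of $U$. This is exactly what the projective surgery formalism of \cite{PeRa} and the control-theoretic analysis of tame ends in \cite{RanEnds} are designed to handle, and would be invoked wholesale in paragraph one.
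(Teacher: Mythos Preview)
Your outline is essentially sound, but it takes a considerably more elaborate route than the paper. The paper's proof is a direct citation: it sets $\bbp=e(U)$, the \emph{end space} of $U$ (the space of proper maps $[0,\infty)\to U$), defines $\bbu$ as the mapping cylinder of the origin map $\vartheta\:e(U)\to U$, $\vartheta(c)=c(0)$, and then simply invokes \cite[Proposition~10.5]{RanEnds} for all the conclusions. The only thing actually verified is that the tameness hypothesis stated here implies the forward and reverse tameness required by that proposition, via \cite[Propositions~8.9 and~10.13]{RanEnds}.

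Your approach --- extracting a \pe structure from the pro-system $\{Z_j\}$, realizing it as a space $\bbp$, and gluing by a mapping cylinder --- is roughly what lies underneath the Hughes--Ranicki machinery, so you are re-deriving rather than quoting. Your chain-level computation of $\wa(\bbp)$ via additivity and the duality $C_*(\tilde\bbu,\widetilde{\partial\bbu})\simeq C^{n-*}(\tilde\bbu)$ is the standard argument and is a nice thing to make explicit; the paper leaves that entirely to the reference. Two small corrections: (i) your first paragraph's ``Ranicki's realization theorems then produce $\bbp$'' is doing all the heavy lifting and would need to be pinned down --- in practice this \emph{is} the end-space construction of \cite{RanEnds}, so you end up at the same place; (ii) \proref{PPdecNor} and \corref{CPdecNor} as stated in this paper are tailored to $\eta$-normal decompositions over a fixed $(Y,\eta)$, which is not the setting here; for the \pe gluing you want \cite[Theorem~2.1 and its addendum]{WallPoinc} directly.
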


\begin{proof}
We use the \dfn{space of ends} $e(U)$ of $U$, which is the space of proper maps $c\:[0,\infty)\to U$ \cite[Definition~1.2]{RanEnds}. 
It is equipped with the \dfn{origin map} $\vartheta\:e(U)\to U$, given by $\vartheta(c)=c(0)$.
Define $\bbu$ to be the mapping cylinder of $\vartheta$:
$$ 
\bbu = e(U)\times [0,1] \dcup U  \big/ \{(c,1)\sim c(0)\} \, ,
$$ 
which retracts by deformation onto $U$, so the inclusion $\alpha\:U\to\bbu$ is a homotopy equivalence.
Set $\bbp=e(U)$. 

The conclusions of the proposition come from \cite[Proposition~10.5]{RanEnds}. We just need to check that  the hypotheses of the latter, i.e. forward and reverse tameness of $\epsilon$, are implied by our tameness hypothesis. This is guaranteed by \cite[Proposition~8.9 and~10.13]{RanEnds}.
\end{proof}
\end{ccote}

\subsection{Cohomology anti-simple manifolds}\label{Scasm}

Let $M$ be a compact connected (smooth or PL)-manifold of dimension $n$ and let $k$ be an integer
with $n\geq 2k$. We shall compare the two Definitions~B of the introduction, i.e. 
$M$ is \dfn{\as{k}} if it admits a handle decomposition without handles of index $j$ 
for $k\leq j \leq n-k$ and $M$ is \dfn{\has{k}} if it is \ccs{k}{n-k}.

Obviously, a \as{k}-manifold is \has{k}. Only the empty manifold is \has{0}. A closed manifold is \as{1} \ 
if and only if it is a homotopy sphere. 

The above definition may also be used for \pe spaces (with or without boundary) of formal dimension $n$. It will be mainly used for closed manifold. In this case, ``cohomology antisimple'' could be called ``homology antisimple'' thanks to the following
 
\begin{Proposition}\label{has/chas}
Let $M$ be a closed connected manifold of dimension $n$ (or a \pe space of formal dimension $n$ without boundary). 
The following condition are equivalent.
\begin{itemize}
 \item[(i)] $M$ is \has{k}.
 \item[(ii)] $H_j(M;R)=0$ for any $\bbz\pi_1(M)$-module $R$ when $k\leq j \leq n-k$.
\end{itemize}
\end{Proposition}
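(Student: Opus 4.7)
The plan is to observe that the claim is essentially a direct consequence of Poincaré duality, applied to the range $[k,n-k]$ which is symmetric about $n/2$.

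Since $M$ is a closed manifold (or a Poincaré space of formal dimension $n$ without boundary), the definition recalled in \ref{Pesp} gives that the cap product with the fundamental class
$$
-\frown [M] : H^{j}(M;R) \fl{\approx} H_{n-j}(M;R)
$$
is an isomorphism for every $\bbz\pi_1(M)$-module $R$ and every integer $j$ (in \eqref{PeDef1}, taking $Z=\emptyset$ in the absolute case, or using that $\partial M=\emptyset$). Hence $H^{j}(M;R)=0$ if and only if $H_{n-j}(M;R)=0$.

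Now assume (i), so $H^{j}(M;R)=0$ for all $\bbz\pi_1(M)$-modules $R$ and all $j$ with $k\leq j\leq n-k$. By the displayed isomorphism, $H_{n-j}(M;R)=0$ for the same range of~$j$. Substituting $i=n-j$, this reads $H_{i}(M;R)=0$ for $k\leq i\leq n-k$, since the range is invariant under $j\mapsto n-j$. This is~(ii). Conversely, assuming (ii), the same Poincaré duality isomorphism gives $H^{n-i}(M;R)=0$ for $k\leq i\leq n-k$, and substituting $j=n-i$ yields $H^{j}(M;R)=0$ for $k\leq j\leq n-k$, which is~(i).

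There is no real obstacle here; the only point to verify is that Poincaré duality is available with arbitrary local coefficients (not merely $\bbz$), which is guaranteed by our definition of Poincaré space in~\ref{Pesp}. The symmetry of the gap $[k,n-k]$ under $j\mapsto n-j$ is the reason the cohomological and homological formulations coincide exactly (rather than only up to a shift).
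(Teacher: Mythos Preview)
Your proof is correct and takes essentially the same approach as the paper: both invoke Poincar\'e duality $H^j(M;R)\approx H_{n-j}(M;R)$ and use that the range $k\leq j\leq n-k$ is invariant under $j\mapsto n-j$. The paper's proof is a single line; your version spells out the same argument more fully.
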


\begin{proof}
Since, by \pe duality, $H^j(M;R) \approx H_{n-j}(M;R)$.
\end{proof}

By \secref{SCSC}, a \has{k} compact manifold $M$ determines a cell-dispensability obstruction $w_k(M)\in\tilde K_0(\pi_1(M))$. 
We call it the \dfn{antisimple obstruction} for $M$, thanks to the following

\begin{Proposition}\label{Pcasas}
Let $M$ be a closed connected manifold of dimension $n\geq 2k \geq 6$ which is \has{k}. 
The following condition are equivalent.
\begin{itemize}
 \item[(a)] $M$ is \as{k}.
 \item[(b)] $w_k(M)=0$.
\end{itemize}
\end{Proposition}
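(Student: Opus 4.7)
The direction $(a)\Rightarrow(b)$ is immediate: a handle decomposition of $M$ with no $j$-handles for $k\leq j\leq n-k$ yields a CW-structure on $M$ with no $j$-cells in the same range, so $M$ is \cef{k}{n-k} and $w_k(M)=0$ by \proref{WP1}.

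For $(b)\Rightarrow(a)$, the plan is to split $M$ along a middle hypersurface. By \proref{WP1}, $M\simeq X$ with $X$ a finite-type CW-complex having no $j$-cells for $k\leq j\leq n-k$. Let $K=X^{k-1}$; the inclusion $K\hookrightarrow X\simeq M$ is $(n-k)$-connected and $K$ is a finite $(k-1)$-complex. Since $n\geq 2k$, general position yields a smooth embedding $K\hookrightarrow M$ realizing this map; let $N_-$ be a smooth regular neighbourhood of the image. Then $N_-$ is a handlebody of index $\leq k-1$, the inclusion $N_-\hookrightarrow M$ is $(n-k)$-connected, and $\pi_1(N_-)\approx\pi_1(M)$. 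Write $N_+=\overline{M\setminus N_-}$ and $\pi=\pi_1(M)$.

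The heart of the argument is to show $N_+$ is a handlebody of index $\leq k-1$. Excision gives $H_*(M,N_-;R)\approx H_*(N_+,\partial N_+;R)$ and Poincar\'e--Lefschetz duality gives $H^i(N_+;R)\approx H_{n-i}(N_+,\partial N_+;R)$ for any $\bbz\pi$-module $R$; combined with the $(n-k)$-connectivity of $(M,N_-)$, this forces $H^i(N_+;R)=0$ for $i\geq k$, so $N_+$ is \cs in every degree $\geq k$. Since $N_+$ is a finite complex, $C_*(\tilde N_+)$ is a bounded complex of finitely generated free $\bbz\pi$-modules; combined with the vanishing of $H_j(\tilde N_+)$ for $j\geq k$ (\lemref{LW1}), this yields a finite free resolution $0\to C_n\to\cdots\to C_k\to B_{k-1}(\tilde N_+)\to 0$, whence $[B_{k-1}(\tilde N_+)]=\sum_{j=k}^n(-1)^{j-k}[C_j]=0$ in $\tilde K_0(\bbz\pi)$. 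By \remref{WR1}(2), $w_k(N_+)=0$, and applying \proref{WP1} to $N_+$ with $\ell=n$ gives $N_+\simeq K'$ for a finite $(k-1)$-complex $K'$.

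It remains to upgrade this homotopy equivalence to a diffeomorphism of $N_+$ with a standard regular neighbourhood of $K'$. The pair $(N_+,\partial N_+)$ is $(n-k)$-connected (homologically by the above, and on $\pi_1$ because $\partial N_-$ is an $S^{n-k}$-bundle over $K$ with $n-k\geq 2$). After replacing $K'$ by a homotopy equivalent finite $(k-1)$-complex realizing $-\tau(N_+\to K')\in\wh(\pi)$ (possible for $k\geq 3$), we may assume $N_+\simeq K'$ is simple. Thus $N_+$ is a stable $\eta'$-thickening of $K'$ of dimension $n$, with $\eta'$ pulled back from the stable normal bundle of $N_+$; since $n\geq 2k>2(k-1)$, \proref{PWallThick} identifies $N_+$ with a regular neighbourhood of a smooth embedding $K'\hookrightarrow\bbr^n$, which carries a handle decomposition with handles of index $\leq k-1$. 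Attaching $N_+$ to $N_-$ along $\partial N_-$ then contributes, dually, handles of index $\geq n-k+1$, and combining with the decomposition of $N_-$ realises $M$ as \as{k}. The main obstacle, in my view, is the matching of Whitehead torsion with stable normal bundle data required to apply \proref{PWallThick}; everything else follows formally from Poincar\'e duality, \proref{WP1}, and the thickening theory of \ref{coThick}.
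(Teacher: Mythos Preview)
Your proof of $(a)\Rightarrow(b)$ and the setup for $(b)\Rightarrow(a)$ --- using \proref{WP1} to get a model without middle cells, embedding $K=X^{k-1}$ in $M$, and taking a regular neighbourhood $N_-$ --- coincide with the paper's. The divergence is in how you handle the complement.

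The paper treats $V=N_+=\overline{M\setminus N_-}$ directly as a cobordism from $\partial N_-$ to $\emptyset$. Since $H_j(\tilde V,\tilde{\partial N_-})\approx H_j(\tilde M,\tilde N_-)=0$ for $j\leq n-k$ and $\pi_1(\partial N_-)\approx\pi_1(V)$, standard handle elimination (Kervaire, \cite{KeBMS}) produces a handle decomposition of $(V,\partial N_-)$ with only handles of index $>n-k$. Glued to the index-$\leq k-1$ handles of $N_-$, this gives the \as{k} decomposition of $M$. No torsion accounting, no thickening classification.

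Your route --- showing $w_k(N_+)=0$, invoking \proref{WP1} again to get $N_+\simeq K'$, adjusting $K'$ to kill the Whitehead torsion, and then appealing to \proref{PWallThick} --- is a legitimate strategy, but it introduces complications that the direct argument avoids. Two concrete issues: first, \proref{PWallThick} as stated in the paper requires $2(k-1)\geq 6$, i.e.\ $k\geq 4$, so your citation does not cover $k=3$ (though Wall's underlying theorem does). Second, the claim that $N_+$ is a regular neighbourhood of $K'\hookrightarrow\bbr^n$ is wrong unless $\eta'$ is trivial; you should say instead that any stable $\eta'$-thickening admits a handle decomposition with handles of index $\leq k-1$ (e.g.\ because one model is a smooth regular neighbourhood of $K'$ embedded in some manifold). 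Once you have established that $(N_+,\partial N_+)$ is $(n-k)$-connected, you could bypass the entire thickening detour by applying handle elimination directly to $N_+$ as a cobordism --- which is exactly the paper's move. Your torsion adjustment and the appeal to \proref{PWallThick} are doing work that handle elimination does for free.
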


\begin{proof}

If is $M$ is \as{k}, then $M$ has the homotopy type of a finite CW-complex $X$ having no cells in dimension 
$j$ for $k\leq j \leq n-k$. By \lemref{WL2bb}, one has $0=w_k(X)=w_k(M)$, which proves $(a)\Rightarrow (b)$.

Conversely, suppose that $w_k(M)=0$. As $M$ is \has{k} and $k\geq 3$, there is by \proref{WP1}
a homotopy equivalence $f\:Y\to M$, where 
$Y$ is a finite CW-complex without $j$-cells for $k\leq j\leq n-k$. Let $K$ be a finite simplicial complex
homotopy equivalent to $Y^{k-1}$. As $n\geq 2k$ one may suppose that $f$ is an PL-embedding for a $C^\infty$-triangulation
of $M$. The complex $f(K)$ admits a smooth regular neighborhood $T$, which admits a handle decomposition 
with handles of index $\leq k-1$. Let $V=M-{\rm int\,}T$, which we see as a cobordism between $N={\rm Bd}T$
and the empty set.
As $n\geq 2k \geq 6$, one has $\pi_1(N)\approx\pi_1(V)$ and 
$H_j(\tilde V,\tilde N)\approx H_j(\tilde M,\tilde T)=0$ for $j\leq n-k$. The procedure of eliminating handles 
in a cobordism (see \cite{KeBMS}) then produces a handle decomposition of $(V,N)$ with only handles of index $> n-k$,
thus proving (a).
\end{proof}

We now turn our attention to realizing elements of $\tilde K_0(\bbz\pi)$ as antisimple obstructions $w_k(M)$ for a
\has{k} manifold $M$ with $\pi_1(M)=\pi$. We first establish necessary conditions to be fulfilled by $w_k(M)$. 
If $P$ is a finitely generated projective $\bbz\pi$-module, so is its \dfn{dual} $P^*=\hom_{\bbz\pi}(P,\bbz\pi)$,
endowed with the left $\bbz\pi$-action  $(a\beta)(u)=\beta(u)\bar a$ (recall that the involution $a\mapsto\bar a$
on $\bbz\pi$ involves the orientation character $\omega$ of $M$: see \ref{holoc}).
This induces an involution $\calp\mapsto\calp^*$ on $\tilde K_0(\bbz\pi)$. An element $\calp\in\tilde K_0(\bbz\pi)$
is called \dfn{$n$-self-dual}, i.e $\calp=(-1)^{n+1}\calp^*$

\begin{Proposition}\label{Pdua}
Let $M$ be a closed connected manifold of dimension $n\geq 6$ which is \has{k} for $k\geq 3$. 
Then $w_k(M)$ is $n$-self-dual.
\end{Proposition}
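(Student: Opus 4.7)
The plan is to use Remark~\ref{WR1}(2) to write $w_k(M)=(-1)^k[B_{k-1}(\tilde M)]$ and then to obtain two independent expressions for this class in $\tilde K_0(\bbz\pi)$ relating it to $[B_{n-k}(\tilde M)]$: one from counting in the cohomologically silent range, and one from cellular Poincar\'e--Lefschetz duality.

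First I would exploit the silent range. By Proposition~\ref{has/chas} together with Lemma~\ref{LW1} applied in each degree $k\leq j\leq n-k$, the homology groups $H_j(\tilde M)$ all vanish in this range and the modules $B_{j-1}(\tilde M)$ are projective there. The module $B_{n-k}(\tilde M)$ is also projective, being a direct summand of the free $C_{n-k}(\tilde M)$ because the quotient $C_{n-k}/B_{n-k}=B_{n-k-1}$ is projective. One thus obtains an exact sequence of projectives
\begin{equation*}
0 \to B_{n-k}(\tilde M) \to C_{n-k}(\tilde M) \to \cdots \to C_k(\tilde M) \to B_{k-1}(\tilde M) \to 0
\end{equation*}
whose interior terms are finitely generated free. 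An alternating-sum count in $\tilde K_0(\bbz\pi)$ yields $[B_{k-1}(\tilde M)] = (-1)^{n+1}[B_{n-k}(\tilde M)]$.

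Next I would apply cellular Poincar\'e--Lefschetz duality. Fixing a smooth triangulation $\tau$ of $M$ and its dual cell decomposition $\tau^*$, one has an isomorphism of $\bbz\pi$-chain complexes $C_j^{\tau^*}(\tilde M)\cong \overline{\hom_{\bbz\pi}(C_{n-j}^\tau(\tilde M),\bbz\pi)}$, intertwining (up to sign) the $\tau^*$-boundary with the $\tau$-coboundary. Since $w_k(M)$ is a homotopy invariant (Lemma~\ref{WL2bb} applied to $\id_M$), it can be computed from either CW structure, so $[B_{k-1}^{\tau}(\tilde M)] = [B_{k-1}^{\tau^*}(\tilde M)]$ in $\tilde K_0(\bbz\pi)$. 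The right-hand side is the class of the image of the dualized boundary $\partial^*\colon \overline{(C_{n-k}^\tau)^*}\to \overline{(C_{n-k+1}^\tau)^*}$; since $B_{n-k}^\tau$ is a projective direct summand of $C_{n-k}^\tau$, this image equals $\overline{(B_{n-k}^\tau)^*}$. Hence $[B_{k-1}(\tilde M)] = [B_{n-k}(\tilde M)]^*$ in $\tilde K_0(\bbz\pi)$.

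Combining the two identities with the involutivity $[P]^{**}=[P]$ gives $[B_{k-1}(\tilde M)] = (-1)^{n+1}[B_{k-1}(\tilde M)]^*$, which multiplied by $(-1)^k$ is exactly $w_k(M) = (-1)^{n+1}w_k(M)^*$. The main obstacle I expect is the careful bookkeeping of signs and of the $\omega_M$-twisted involution on $\bbz\pi$ in the cellular duality isomorphism, so that the image of the dualized boundary is correctly identified with the conjugate-dual $\overline{(B_{n-k}^\tau)^*}$ as a left $\bbz\pi$-module. Should the dual-cell approach become too tied to sign conventions, an alternative is to replace it by the algebraic Poincar\'e duality chain equivalence $-\frown[M]\colon C^{n-*}(\tilde M)\to C_*(\tilde M)$ together with the fact that chain-homotopy-equivalent bounded complexes of finitely generated projectives have equal boundary classes in $\tilde K_0$.
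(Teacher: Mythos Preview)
Your proposal is correct and follows essentially the same route as the paper's proof. The paper also expresses $w_k(M)=(-1)^k[B_{k-1}]$, uses a handle decomposition $\calh$ and its dual $\calh^*$ (in place of your triangulation $\tau$ and dual cells $\tau^*$) to identify the relevant boundary module with a dual, and invokes the silent range via \remref{Rj>k} (your exact sequence $0\to B_{n-k}\to C_{n-k}\to\cdots\to C_k\to B_{k-1}\to 0$ is exactly what underlies that remark). The only organizational difference is that the paper packages the silent-range step as the equality $w_k(M)=w_{n-k}(M)$ and then applies duality at degree $n-k$ (obtaining $[B_{n-k-1}(\calh^*)]=[Z_k(\calh)]^*=-[B_{k-1}(\calh)]^*$), whereas you apply duality at degree $k$ (obtaining $[B_{k-1}(\tau^*)]=[B_{n-k}(\tau)]^*$) and then use the silent range separately; these are the same computation read from opposite ends.
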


\begin{proof}
Consider a handle decomposition $\calh$ for $M$
$$
\calh : \ D^n = \calh_0 \subset \calh_1 \subset \cdots \subset \calh_n = M \ ,
$$
where $\calh_j$ is the union of handles of index $\leq j$ in $\calh$. This handle decomposition 
makes $M$ homotopy equivalent to a cell complex and thus gives rise to 
a chain complex of free $\bbz\pi$-modules $C_j(\calh)=H_j(\tilde \calh_j,\tilde \calh_{j-1})$ 
whose homology is that of $\tilde M$. As seen in \lemref{LW1b} and its proof, one has
\beq{Pdua-e1}
w_k(M) = (-1)^k[B_{k-1}(\calh)]\in \tilde K_0(\bbz\pi) \, .
\eeq
We now use the ``dual'' handle decomposition $\calh^*$ of $\calh$ (see e.g. \cite[p.~394]{MilnorWT}), producing a 
chain complex $C_j(\calh^*)$, whose homology is also that of $\tilde M$. By \remref{Rj>k}, one has 
\beq{Pdua-e2}
w_k(M) = w_{n-k}(M) = (-1)^{n-k}[B_{n-k-1}(\calh^*)] \in \tilde K_0(\bbz\pi) \, .
\eeq
The correspondence $\Theta$ associating to a $j$-handle $e$ its dual ($n-j$)-handle $\Theta(e)$ 
produces a chain isomorphism
$$
\dia{
C_{n-k+1}(\calh^*) \ar[r]^{\partial} \ar[d]^{\Theta}_\approx & C_{n-k}(\calh^*) \ar[d]^{\Theta}_\approx 
\ar[r]^{\partial} &  C_{n-k-1}(\calh^*) \ar[d]^{\Theta}_\approx 
\\
C_{k-1}(\calh)^* \ar[r]^{\partial^*}   & C_{k}(\calh)^* \ar[r]^{\partial^*} 
&  C_{k+1}(\calh)^* 
}
$$
Therefore,
\beq{Pdua-e3}
B_{n-k-1}(\calh^*) \approx {\rm Image\,}\big(\partial^*\: C_{k}(\calh)^* \to C_{k+1}(\calh)^*\big) \, .
\eeq
Since $B_{k-1}(\calh)$ is projective, the exact sequence
\beq{Pdua-e4}
0 \to Z_k(\calh) \to C_k(\calh) \fl{\partial} B_{k-1}(\calh) \to 0
\eeq
splits. Passing to the dual modules, we get the split exact sequence
\beq{Pdua-e5}
0 \to B_{k-1}(\calh)^* \fl{\partial^*} C_k(\calh)^* \to Z_k(\calh)^* \to 0  \ . 
\eeq
Therefore,
$$
\begin{array}{rcll}
 (-1)^{n-k} w_k(M) &=& [B_{n-k-1}(\calh^*)] & \comeq{by \eqref{Pdua-e2}} \\[2mm] &=&
 [Z_k(\calh)^*] & \comeq{by \eqref{Pdua-e3} and \eqref{Pdua-e5} } \\[2mm] &=&
 [Z_k(\calh)]^* \\[2mm] &=&
 -[B_{k-1}(\calh)]^*  & \comeq{by \eqref{Pdua-e4}} \\[2mm] &=&
 (-1)^{k+1} w_k(M)^*  & \comeq{by \eqref{Pdua-e1}  ,}
\end{array}
$$
proving that $w_k(M)=(-1)^{n+1}w_k(M)^*$. 
\end{proof}

\subsection{Realization of antisimple obstructions}\label{Srea}

Let $\pi$ be a finitely presented group and let $\omega\:\pi\to\{\pm 1\}$ be a homomorphism.
In this section, we shall realize classes in $\tilde K_0(\bbz\pi,\omega)$ as antisimple obstruction of a 
\has{k} closed manifold $M$. Making sense of this question requires some identification of $(\pi_1(M),\omega_M)$
with $(\pi,\omega)$. More strongly, we will fix the stable normal ($k-1$)-type of $M$, using the notion
of an $(Y,\eta)$-referred manifold (see~\ref{MPfnormal}). We will thus consider the following

\begin{Problem}\label{ProbRea}
Let  $Y$ be a connected finite cell complex of dimension $k-1$ and let $\eta$ be a stable vector bundle over $Y$.
Let $\calp\in\tilde K_0(\bbz\pi_1(Y),\omega_\eta)$. Does there exist an
$(Y,\eta)$-referred closed manifold $(M,g)$ such that $M$ is \has{k} and satisfies $g_*(w_k(M))=\calp$?
\end{Problem}

Problem~\ref{ProbRea} will partly answered in \thref{Trea} below (see also \proref{PreaPoin}).
Before going to this, we show that a $(Y,\eta$)-referred compact manifold enjoys a decomposition involving
a stable $\eta$-thickening of $Y$ in the sense of~\ref{coThick}.

\begin{Proposition}\label{Pdecomp}
Let $Y$ be a connected finite cell complex of dimension $k-1$ and
let $\eta$ be a stable vector bundle over $Y$.
Let $(M,g)$ be an $(Y,\eta$)-referred compact manifold of dimension $r\geq 2k\geq 6$. 
Then, there are codimension-$0$ compact submanifolds $N$ and $T$ of $M$ giving a decomposition
\beq{Pdecomp-dec}
M= N\cup T \ , \ N\cap T = \bd N \ , \ \bd T = \bd  M \dcup \bd N  \ ,
\eeq
such that $(N,g_{|N})$ is a stable $\eta$-thickening of $Y$ and $g_{g|T}$ is an $(Y,\eta$)-referred map.
Moreover, for any $\bbz\pi_1(Y)$-module $R$, the restriction homomorphism \\
$\rho^j\:H^j(M,T;R)\to H^j(M;R)$
satisfies the following properties:
\begin{itemize}
 \item[(i)] $\rho^j$ is an isomorphism for $k\leq j\leq r-k-1$ and $\rho^{r-k}$ is injective;
 \item[(ii)] if $M$ is a closed manifold, then $\rho^j$ is an isomorphism for $k\leq j\leq r-k$.
 Consequently, $M$ is \has{k} if and only if
$T$ is \has{k}, in which case $g_*(w_k(M))=(g_{|T})_*(w_k(T))$ in $\tilde K_0(\bbz\pi_1(Y),\omega_\eta)$.
\end{itemize}
\end{Proposition}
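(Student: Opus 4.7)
The plan is to realize the decomposition by taking $N$ to be a smooth regular neighborhood of an embedded copy of $Y$, with $T$ the closure of its complement, and then to derive the cohomological assertions from excision and Lefschetz duality on $(N,\bd N)$, combined in the closed case with Poincar\'e duality on $M$.

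\emph{Construction and thickening.} Since $g$ is $k$-connected and $\dim Y=k-1$, \lemref{Lrefretr} replaces $g$ up to homotopy equivalence by a retraction, producing a section $\gamma_0\:Y\to M$. As $r\geq 2k\geq 2(k-1)+2$, general position homotopes $\gamma_0$ (relative to a $\calc^\infty$-triangulation) to a PL embedding $\gamma\:Y\hookrightarrow\ind M$. I take $N$ to be a smooth regular neighborhood of $\gamma(Y)$ \cite{HirschNhb} and $T=\overline{M\setminus N}$, which gives~\eqref{Pdecomp-dec}. The inclusion $\gamma\:Y\hookrightarrow N$ is a simple homotopy equivalence, so $g_{|N}\:N\to Y$ is a simple homotopy equivalence (via $g\pcirc\gamma\simeq\id_Y$); the equality $\nu_N=\nu_M|_N$ yields $\eta$-normality. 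Lefschetz duality gives $H^j(N,\bd N;R)\approx H_{r-j}(Y;R)$, vanishing for $j\leq r-k$, so $(N,\bd N)$ is $(r-k)$-connected (together with $\pi_1(\bd N)\approx\pi_1(N)$, valid in codimension $\geq 3$), whence $(N,g_{|N})\in\Xi_r(Y,\eta)$ by \proref{PWallThick}.

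\emph{The complement.} Because $N$ has a handle decomposition with handles of index $\leq k-1$, its dual realizes $M$ from $T$ by attaching handles of index $\geq r-k+1$, so $(M,T)$ is $(r-k)$-connected. Thus $T\hookrightarrow M$ is $(r-k)$-connected; since $r-k\geq k$ and $g$ is $k$-connected, $g_{|T}$ is $k$-connected, and its $\eta$-normality follows from \lemref{Lnormal}(3), making $(T,g_{|T})$ an $(Y,\eta)$-referred manifold. Applying excision and Lefschetz duality on $(N,\bd N)$ yields $H^j(M,T;R)\approx H_{r-j}(Y;R)$, which vanishes for $j\leq r-k$ and equals $H_{k-1}(Y;R)$ for $j=r-k+1$; part~(i) then follows directly from the long exact sequence of $(M,T)$.

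\emph{The closed case.} For (ii), with $M$ closed, I must upgrade $\rho^{r-k}$ from injective to iso, which reduces to showing the inclusion-induced map $H^{r-k+1}(M,T;R)\to H^{r-k+1}(M;R)$ is injective. Under Poincar\'e duality on closed $M$ together with the excision $M\setminus T=\ind N\simeq Y$, this map identifies with $\gamma_*\:H_{k-1}(Y;R)\to H_{k-1}(M;R)$, which is a split injection because $g\pcirc\gamma\simeq\id_Y$ gives $g_*\pcirc\gamma_*=\id$; this yields (ii). The equivalence of the two antisimplicity conditions is then immediate from the upgraded iso range, and $g_*(w_k(M))=(g_{|T})_*(w_k(T))$ follows from \lemref{WL2bb} applied to the $k$-connected inclusion $T\hookrightarrow M$ pushed forward via $g$. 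The main obstacle is precisely this last upgrade: extending the iso range up to $j=r-k$ hinges on the Poincar\'e-duality identification of the connecting map, the one place where closedness of $M$ enters nontrivially.
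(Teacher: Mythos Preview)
Your proof is correct and follows essentially the same approach as the paper: construct $N$ as a smooth regular neighborhood of an embedded section of $g$, use excision and Lefschetz duality on $(N,\bd N)$ to get $H^j(M,T;R)=0$ for $j\leq r-k$, and in the closed case identify the map $H^{r-k+1}(M,T;R)\to H^{r-k+1}(M;R)$ with the split injection $H_{k-1}(N;R)\to H_{k-1}(M;R)$ via \pe duality. One small correction: the $\eta$-normality of $g_{|T}$ does not follow from \lemref{Lnormal}(3) (which concerns restriction to the \emph{boundary}), but rather from the trivial observation that $\nu_T=\nu_M|_T$ for a codimension-$0$ submanifold, so $\Phi_\eta\pcirc g_{|T}$ classifies $\nu_T$ because $\Phi_\eta\pcirc g$ classifies $\nu_M$.
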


\begin{proof}
The $(Y,\eta)$-reference map $g\:M\to Y$ admits a homotopy section $\gamma\:Y\to M$ 
(see \lemref{Lrefretr} and its proof).
Up to homotopy equivalence, one may assume that $Y$ is a finite simplicial complex and that $\gamma$ is an embedding
into a $\calc^\infty$-triangulation of $M$ with $\gamma(Y)\cap \partial M=\emptyset$. Then, $\gamma(Y)$ has a smooth regular neighborhood $N$ in $M\setminus \partial M$ \cite{HirschNhb}.
Hence, the embedding $\gamma$ factors through an embedding $\gamma^1\:Y\to N$ which is a simple homotopy equivalence,
and $g_{|N}\:N\to Y$ is a homotopy inverse of $\gamma^1$. It follows that $g_{|N}$ is a homotopy equivalence,
which is simple by the composition rule for the Whitehead torsion \cite[Lemma~7.8]{MilnorWT}.
Since $g$ is $\eta$-normal, so is $g_{|N}$.
Since $r\geq 2k\geq 6$, one has $\pi_1(\bd N)\approx\pi_1(N)$ and, by \pe duality,
$H_j(\tilde N,\widetilde{\bd N})\approx H^{r-j}(N;\bbz\pi_1(N))\approx H^{r-j}(Y;\bbz\pi_1(Y))=0$ for $j\leq n-k$.
Therefore, the pair $(N,\bd N)$ is ($r-k$)-connected and $(N,g_{|N})$ is a stable $\eta$-thickening of $Y$. 
We thus get the decomposition \eqref{Pdecomp-dec} by setting $T=M\psetminus {\rm int}N$.

By van Kampen theorem, $\pi_1g_{g|T}\:\pi_1(T)\to\pi_1(Y)$ is an isomorphism. Since
$H_j(\tilde M,\tilde T)\approx H_j(\tilde N,\widetilde{\bd N})$ the pair $(M,T)$ is ($r-k$)-connected.
Therefore, $g_{|T}$ is an $Y$ reference map, which is $\eta$-normal since $g$ is so.

By excision and \pe duality, one has isomorphisms
\beq{Pdecomp-eq0}
H^j(M,T;R) \fl{\approx} H^j(N,\bd N;R) \fl{\approx} H_{r-j}(N;R) = 0 \ \hbox{ for } \ j\leq r-k \, ,
\eeq
which proves (i). 

When $\bd M=\emptyset$, we claim that the restriction homomorphism $\beta\:H^*(M,T;R)\to H^*(M;R)$ is injective. 
Indeed, \eqref{Pdecomp-eq0} sits in a commutative diagram
$$
\dia{
H^{j}(M,T;R) \ar[d]^\beta \ar[r]^(0.46){\rm\tiny excis.}_(0.45)\approx &  H^{j}(N,\bd N;R) 
\ar[r]^(0.54){\rm PD}_(0.54)\approx & H_{r-j}(N;R)  \ar[d]^{\sigma_*}
\\  
H^{j}(M;R) \ar[rr]^{\rm PD}_\approx && H_{r-j}(M;R) 
}
$$
where $\sigma_*$ is induced by the inclusion $\sigma\:N\to M$. As the reference map $g\:M\to Y$ produces
a homotopy retraction of $\sigma$, the homomorphism  $\sigma_*$ is injective. Therefore,
the cohomology sequence of the pair $(M,T)$ splits into short exact sequences
$$
0 \to H^j(M,T;R) \to H^j(M;R) \to H^j(T;R) \to 0 \, .
$$
This, together with \eqref{Pdecomp-eq0} proves the first assertion of (ii). 
Finally, one has $H_*(\tilde M,\tilde T)\approx H_*(\tilde N,\widetilde{\bd N})$, so the pair $(M,T)$ is ($r-k$)-connected.
As $r-k\geq k$, one has $g_*(w_k(M))=(g_{|T})_*(w_k(T))$ by \lemref{WL2bb}.
\end{proof}

Our first result concerning Problem~\ref{ProbRea} is an affirmative answer in the category of \pe spaces. Recall that, according to our definition in~\ref{Pesp}, a \pe space or pair is finitely dominated.

\begin{Proposition}\label{PreaPoin}
Let $Y$ be a connected finite cell complex of dimension $k-1\geq 2$ and let $\eta$ be a stable vector bundle over $Y$.
Let $\calp\in \tilde K_0(\bbz\pi_(Y),\omega_\eta)$ and let $r\geq 2k$ be an integer.
Then, there exists an $(Y,\eta$)-referred \pe space 
$(\bbp,q)$ of formal dimension $r$, 
such that $\bbp$ is \has{k} satisfying $w_k(\bbp)=\calp$ and $\wa(\bbp) = \calp + (-1)^{r}\calp^*$ 
(see Convention~\ref{Conv} below). 
Moreover, $\bbp$ admits up to homotopy an $\eta$-normal \pe decomposition (see~\ref{PdecNor})
\beq{PreaPoin-dec}
\bbp \simeq N \cup \bbt \ , \ N\cap \bbt = \bd N  = \bd\bbt \ ,
\eeq
where $(N,q_{|N})$ is a stable $\eta$-thickening of dimension $r$ of $Y$.
The \pe space $\bbt$ 
is \has{k} and satisfies $w_k(\bbt)=w_k(\bbp)$ and $\wa(\bbt) = \wa(\bbp)$. 
\end{Proposition}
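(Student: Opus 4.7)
The plan is to realize $\bbp$ as the \pe completion of a suitable open manifold, obtained by geometrically thickening the CW-realization of $\calp$ from \proref{PW4} and gluing it to the stable thickening of $Y$.

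First, I would fix the stable $\eta$-thickening $(N,g_N)$ of $Y$ in dimension $r$ from \proref{PWallThick}, and choose a finitely dominated $Y$-referred CW-space $X_k$ with $w_k(X_k)=\calp$ from \proref{PW4}, together with its filtration $Y=L_0\subset L_1\subset L_2\subset\cdots$ from \remref{WR10} giving a $k$-dimensional CW-complex $L\simeq X_k$. Next, I would build this filtration geometrically: starting with $M_0=N$, inductively construct a stable $\eta$-thickening $M_j$ of $L_j$ by attaching $(k-1)$- and $k$-handles to $M_{j-1}$ along the attaching maps of the new cells in $L_j\setminus L_{j-1}$, placing all handles on the side of $\bd M_{j-1}$ opposite to the initial boundary $\bd N$ so that $\bd N\subset\bd M_j$ is preserved for every $j$. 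Setting $W=\bigcup_j M_j$, equipped with the induced $(Y,\eta)$-reference map extending $g_N$, one obtains a $\sigma$-compact, finitely dominated $r$-manifold with compact boundary $\bd N$ and one tame end $\epsilon$ satisfying $\pi_1(\epsilon)\approx\pi_1(W)\approx\pi_1(Y)$. Since $W\simeq L$, \remref{RWallObs} gives $\wa(W)=\calp$.

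Next, applying \proref{Pcompletion} to $W$ yields a \pe cobordism $(\bbv,\bd N,\bbq)$ together with a homotopy equivalence $\alpha\colon W\to\bbv$ and obstruction identities $\wa(\bbv)=\calp$ and $\wa(\bbq)=\calp+(-1)^r\calp^*$. One would then set $\bbt$ to be the \pe pair obtained from $\bbv$ (with its boundary identified with $\bd N$ as in Definition~\ref{PdecNoDef}) and $\bbp=N\cup_{\bd N}\bbt$; the $\eta$-normal \pe decomposition is verified by \corref{CPdecNor2}. The obstruction identities then follow in a uniform way: \lemref{WL2bb} gives $w_k(\bbt)=w_k(L)=\calp$; \lemref{LXT} applied to the pair $(\bbp,\bbt)$ together with $w_k(N)=w_k(Y)=0$ (as $\dim Y=k-1$) yields $w_k(\bbp)=\calp$; and additivity of $\wa$ under gluing with $\wa(N)=0$ gives $\wa(\bbp)=\wa(\bbt)$. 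The \has{k} property of $\bbp$ then inherits from that of $L$ via \pe duality and \proref{has/chas}.

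The main obstacle is reconciling the \pe cobordism of \proref{Pcompletion}, which carries two boundary components $\bd N$ and $\bbq$, with the single-boundary \pe pair $\bbt$ required by the decomposition (where $\bd\bbt=\bd N$), and matching the formula $\wa(\bbt)=\calp+(-1)^r\calp^*$ rather than $\wa(\bbv)=\calp$. Resolving this subtlety requires either an additional closing-off step that absorbs the extra end-component $\bbq$ into the closed structure of $\bbp$, or reinterpreting $\bbt$ as the dual half of the \pe cobordism in the sense of \pe duality, with the end-theoretic techniques of \cite{PeRa,RanEnds} playing a more refined role than the direct invocation of \proref{Pcompletion} suggests. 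This dimension-and-sign bookkeeping, together with the precise verification that the gluing $N\cup_{\bd N}\bbt$ is genuinely a \pe space of formal dimension $r$ (via a Mayer--Vietoris argument for the cap product), is where the most careful analysis lies.
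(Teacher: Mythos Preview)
Your approach has a genuine dimensional gap, which you yourself flag as ``the main obstacle'' but do not resolve. Building $W$ as an $r$-dimensional manifold with compact boundary $\bd N$ and applying \proref{Pcompletion} yields a \pe cobordism $(\bbv,\bd N,\bbq)$ in which $\bbq$ is an $(r-1)$-dimensional \pe space. Gluing $N$ back along $\bd N$ then produces an $r$-dimensional \pe pair $(N\cup_{\bd N}\bbv,\bbq)$ with nonempty boundary $\bbq$, not the closed \pe space $\bbp$ of formal dimension $r$ that the statement requires. This also explains your sign mismatch: $\wa(\bbv)=\calp$ while the theorem demands $\wa(\bbt)=\calp+(-1)^r\calp^*$; the latter formula is precisely the Wall obstruction of the \emph{end boundary}, not of the cobordism body.

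The paper's fix is to shift up one dimension: thicken the filtration $L_j$ to $(r+1)$-dimensional manifolds $U_j$ (with $U_0$ the stable $\eta$-thickening of $Y$ in dimension $r+1$, and no compact boundary kept aside), take $U=\bigcup U_j$, and apply \proref{Pcompletion} to obtain a \pe pair $(\bbu,\bbp)$ with $\bbp$ the $r$-dimensional end boundary. The connectivity of $(\bbu,\bbp)$ then transfers $w_k(L)=\calp$ to $w_k(\bbp)$, and the end formula gives $\wa(\bbp)=\calp+(-1)^r\calp^*$ directly. The decomposition $\bbp\simeq N\cup\bbt$ is \emph{not} obtained from the gluing you describe; rather, the paper crosses $U$ with $S^1$ to kill the Siebenmann obstruction, compactifies $U\times S^1$ to a genuine manifold $A$, applies \proref{Pdecomp} to the closed manifold $V=\bd A$ to split off a stable $\bar\eta$-thickening $N_0\approx N\times S^1$, and then passes to the infinite cyclic cover to descend the decomposition to $\bbp$. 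This $S^1$-trick from \cite{PeRa} is also what establishes $\eta$-normality of the reference map on $\bbp$, a point your sketch does not address.
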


\begin{Convention}\label{Conv}\rm
Let $Y$ be a connected finite cell complex. 
Let $(A,\alpha)$ be an $(Y,\eta)$-referred space. Then, $\tilde K_0$-equalities are understood to hold in 
$\tilde K_0(\pi_1(Y),\omega_\eta)$, using the map $\alpha$ and its restrictions to subspaces of $A$. For instance, 
the equation $w_k(\bbt)=w_k(\bbp)$ in \proref{PreaPoin} should be understood as $(q_{|\bbt})_*(w_k(\bbt))=q_*(w_k(\bbp))$.
\mancqfd
\end{Convention}

\begin{proof}
We set $(\pi,\omega)=(\pi_1(Y),\omega_\eta)$ and we use Convention~\ref{Conv} throughout the proof without notice.
Consider the infinite $k$-dimensional CW-complex $L$ of \proref{PW4}. By \remref{WR10}, 
$L$ is the direct limit $L_\infty$ of a direct system 
\beq{Trea-e0}
Y\hookrightarrow L_1\hookrightarrow L_2 \hookrightarrow \cdots \hookrightarrow L_\infty=L \fl{g} Y
\eeq
of finite cell complexes of dimension $k$. The $Y$-referred map $g\:L\to Y$ is the direct limit of 
$Y$-referred maps $g_j\:L_j\to Y$. The composed map in \eqref{Trea-e0} is ${\rm id}_Y$. 

We now see that Direct system \eqref{Trea-e0} may be realized up to homotopy equivalence by a system of 
($r+1$)-dimensional manifolds (compact with boundary when $j<\infty$), getting a commutative diagram
\beq{Trea-e1}
\dia{ 
Y \ar@{^{(}->}[r] \ar@{<-}[d]^(.5){\phi_0}
& 
L_1 \ar@{^{(}->}[r] \ar@{<-}[d]^(.5){\simeq}_(.5){\phi_1} & L_2 \ar@{^{(}->}[r]
\ar@{<-}[d]^(.5){\simeq}_(.5){\phi_2}
& \cdots \ar@{^{(}->}[r] & L_\infty=L     \ar@{<-}[]!<3.3ex,-2ex>;[d]!<3.3ex,1ex>^(.5){\simeq}_(.5){\phi} 
\ar[r]^(0.60)g & Y
\\ 
U_0 \ar@{^{(}->}[r] & U_1 \ar@{^{(}->}[r]^{\alpha_0} & U_2 \ar@{^{(}->}[r]^{\alpha_1} & \cdots \ar@{^{(}->}[r] & U_\infty=U
}
\eeq
where the vertical arrows are homotopy equivalences (simple for $j<\infty$).
We start with $(U_0,\phi_0)$ being a stable $(Y,g_0^*\eta)$-thickening of dimension $r+1$. We can thus choose a stable
isomorphism from the stable normal bundle on $U_0$ and $\phi_0^*(g_0^*\eta)$, or, equivalently, 
a stable trivialization $\calf_0$ of $TU_0\oplus \phi_0^*(g^*\eta)$ 
(i.e. $(\phi_0,\calf_0)$ is a {\it normal map} in the language of \cite{WallBookSurgery}). 
Recall that the inclusions in \eqref{Trea-e0} are obtained by attaching cells of dimension $k-1$ and $k$.
By surgery below the middle dimension
on $\bd U_0$, \cite[Chapter~1]{WallBookSurgery}, the data $(U_0,\phi_0,\calf_0)$ thus determines $(U_1,\phi_1,\calf_1)$, then
$(U_2,\phi_2,\calf_2)$, etc. Note that $(U_j,\phi_j)$ is the stable 
$(L_j,g_j^*\eta)$-thickening of dimension $r+1$ and $\calf_j$ (obtained from $\calf_0$ without further choice) 
determines the embedding $\alpha_j\:U_j\to U_{j+1}$.

At the limit when $j\to\infty$, one gets an open manifold $U$ together with an 
$(Y,\eta)$-reference map  $g\pcirc\phi\: U\to Y$. 
The manifold $U$ has one end $\epsilon$, determined by the cofinal system of neighborhoods 
$E_j=U\psetminus {\rm int} U_j$. The inclusions $E_{j+1}\hookrightarrow E_j\hookrightarrow U$ induce an isomorphism
on the fundamental groups. As $U\simeq L$ is finitely dominated by \proref{PW4} and $\bd E_j$ is a closed manifold,
the open manifold $E_j$ is finitely dominated (see \cite[Complement~6.6]{SieTh}). Therefore, 
the end $\epsilon$ is {\it tame} (see~\ref{endsMan}).

As mentioned in \remref{RWallObs}, $w_k(L)=w_k(U)$ is Wall's finiteness obstruction $\wa(U)$. 
One has $\wa(U)=\wa(Z_i)$ by the sum theorem \cite[Theorem~6.5]{SieTh} and thus, by \cite[Proposition~6.11]{SieTh},
$\wa(U)$ coincides with $\sigma(\epsilon)$, the Siebenmann obstruction  to complete $U$ into a  
compact manifold with boundary. 

By \proref{Pcompletion}, there exists a \pe pair $(\bbu,\bbp)$, 
with $\pi_1(\bbp)\approx\pi_1(\bbu)$, together with a homotopy equivalence $\alpha\:U\to\bbu$.
Recall from \proref{PW4} that $\bbu\simeq L$ is is \cs in degrees $\geq k$. By  \pe duality, one has
$$
H_j(\widetilde{\bbu},\tilde\bbp) \approx H_j(\bbu,\bbp;\bbz\pi) \approx H^{r+1-j}(\bbu;\bbz\pi) = 0  \quad {\rm for } \
j\leq n+1-k \, .
$$
Hence, the pair $(\bbu,\bbp)$ is ($r+1-k$)-connected. Since $U\simeq \bbu$, the map $g\pcirc\phi\: U\to Y$ extends 
to an $Y$-reference map $q\:\bbu\to Y$. By the connectivity of $(\bbu,\bbp)$, the restriction of $q$ to $\bbp$ 
(also called $q$) is an $Y$-reference map. Also, this connectivity implies that $\bbp$ is \has{k} and that 
$w_k(\bbp)=w_k(L)=\calp$.
As $\alpha\:U\to \bbu$ is a homotopy equivalence, one has $\wa(\bbu)=\wa(U)=\wa(L)=\calp$. The formula 
\beq{wobd}
\wa(\bbp) = \calp + (-1)^{r}\calp^*
\eeq
is asserted in \proref{Pcompletion}.

We now prove that the reference map $q\:\bbu\to Y$ is $\eta$-normal (and so is $q\:\bbp\to Y$ by Part (3) of 
\lemref{Lnormal}). We use the ideas of Pedersen-Ranicki \cite[proof of Lemma~3.1]{PeRa} (with different notations).

By Siebenmann's famous theorem \cite[Theorem~7.5]{SieTh}, 
the open manifold $U\times S^1$ is the interior of a compact manifold $A$ with boundary
$V=\bd A$. The \pe duality argument used above to prove that the pair $(\bbu,\bbp)$ is ($r+1-k$)-connected proves that 
$(A,V)$ is ($r+1-k$)-connected.
The projections of $U\times S^1$ onto $S^1$ together with the map $g\pcirc\phi\:U\to Y$ gives rise to a map 
$g_A\:A\to Y$ which is $\eta$-normal and a map $p_A\:A\to S^1$. Let $(\hat A,\hat V)\to (A,V)$ be the infinite cyclic cover associated to the map $p_A$.
It is proven in \cite[proof of Lemma~3.1]{PeRa} that there are homotopy equivalences 
\begin{itemize}
 \item[(1)] $h_1\: (A,V)\fl{\simeq} (\bbu,\bbp)\times S^1$
 \item[(2)] $h_2\: \hat{V} \fl{\simeq} \bbp$.
\end{itemize}
The composite map
\beq{Prea-eq7}
A \fl{h_1} \bbu\times S^1 \fl{h_1'} A \fl{g_A}  Y \, , 
\eeq
where $h_1'$ is a homotopy inverse of $h_1$, is homotopic to $g_A$ and is thus $\eta$-normal. By \lemref{Lnormal},
we deduce that $g_A\pcirc h_1'$ is $\eta$-normal, and so is its restriction to $\bbu\times pt$. 
But this restriction is homotopic to the $Y$-reference map $q\:\bbu\to Y$, which is thus $\eta$-normal.

It remains to construct the decomposition of \eqref{PreaPoin-dec}. We check that the map 
$\bar g_A= (g_A,p_A)\: A\to Y\times S^1$ is an $(Y\times S^1,\bar\eta)$-reference map for $A$,
where $\bar\eta$ is induced from $\eta$ by the projection to $Y$. As the pair $(A,V)$ is 
is ($r+1-k$)-connected, the restriction of $\bar g_A$ to $V$ is a an $(Y\times S^1,\bar\eta)$-reference map for $V$.
By \proref{Pdecomp}, there is a manifold decomposition $V= N_0 \cup T_0$, where $(N_0,\bar g_A)$ is a stable 
$\bar\eta$-thickening of $Y\times S^1$. As $\bar g_A|U\times S^1 \simeq q\times {\rm id}_{S^1}$,
\proref{PWallThick} implies that $N_0$ is diffeomorphic to $N\times S^1$, 
where $(N,q)$ is a stable $\eta$-thickening of $Y$.

Using the homotopy equivalence $h_2\:\hat V \to \bbp$, one thus gets a decomposition 
$\bbp\simeq (N\times \bbr) \cup \hat T_0$.
The manifold $N$ is diffeomorphic to $N\cup_{\bd N} C$ where $C=\bd N \times [0,1]$. Form the space
$$
\bbt =  \big(C\times \{0\}\big) \cup \hat T_0  \subset (N\times\bbr) \cup \hat T_0   \, .
$$
containing a subspace $\bd\bbt=\bd (N\times \{0\})$. One has the decomposition 
$$
\bbp \simeq (N\times\{0\})\cup \bbt\simeq N\cup\bbt \, .
$$

As the inclusions $\bd N\hookrightarrow N\hookrightarrow \bbp$ induce isomorphisms 
$\pi_1(\bd N)\approx\pi_1(N)\approx\pi_1(\bbp)$, it follows from van Kampen's theorem and the property of
amalgamated products (see \cite[Theorem~2.6, p.~187]{LySchu}) that $\pi_1(\bd N)\approx\pi_1(\bbt)\approx\pi_1(\bbp)$.
By excision, $H_*(\tilde N,\widetilde{\bd N})\approx H_*(\tilde \bbp,\tilde \bbt)$. Therefore, the pair $(\bbp,\bbt)$ is ($r-k$)-connected and thus the map of $q_{|\bbt}\:\bbt\to Y$ is an $Y$-reference map.

As the map $q\:N\to Y$ is $\eta$-normal, that \eqref{PreaPoin-dec} is an $\eta$-normal \pe decomposition
follows from \corref{CPdecNor2}.

Finally, the equation $\wa(\bbt) = \wa(\bbp)$
comes from the sum theorem \cite[Theorem~6.5]{SieTh}. That $\bbt$ is \has{k} and the 
equation $w_k(\bbt)=w_k(\bbp)$ are proven by the corresponding arguments given in the proof of 
Part (ii) of \proref{Pdecomp}.
\end{proof}

We now turn our attention to Problem~\ref{ProbRea} as it is stated, that is in the category of closed manifolds.
Let $Y$ be a connected finite cell complex of dimension $k-1$ and
let $\eta$ be a stable vector bundle over $Y$. Set $(\pi,\omega)=(\pi_1(Y),\omega_\eta)$.
Let $(M,g)$ be an $(Y,\eta$)-referred closed manifold of dimension $r$ such that $M$ is \has{k}.
By \proref{Pdua}, its antisimple obstruction is $r$-self-dual, i.e. satisfies $g_*(w_k(M))=(-1)^{r+1}g_*(w_k(M))^*$.
It thus defines a class
\beq{tate}
[g_*(w_k(M))] \in H^{r+1}(\bbz_2;\tilde K_0(\bbz\pi,\omega)) 
\eeq
in the Tate cohomology group 
\small
$$
H^{r+1}(\bbz_2;\tilde K_0(\bbz\pi,\omega)) = 
\{\calp\in \tilde K_0(\bbz\pi,\omega)\mid \calp=(-1)^{r+1}\calp^*\}\big/ \{\calp+(-1)^{r+1}\calp^*\} \, .
$$
\rm
This Tate cohomology group is an abelian group of exponent two, which occurs 
in the Ranicki exact sequence
\beq{ranicki}
L_{r+1}^h(\pi,\omega) \to L_{r+1}^p(\pi,\omega) \to
H^{r+1}(\bbz_2;\tilde K_0(\bbz\pi,\omega)) \fl{\delta_{R}} L_{r}^h(\pi,\omega) \to L_{r}^p(\pi,\omega)
\eeq
where $L_j^h$ (respectively: $L_j^p$) are the surgery obstruction groups for surgery data with target
a finite (respectively: finitely dominated) \pe complex. This sequence was first obtained by Ranicki
using his algebraic setting of surgery \cite[Theorem~4.3]{RanFound}. A geometrical version  
of Sequence~\eqref{ranicki} will be used, which was provided by Pedersen-Ranicki \cite[p.~243]{PeRa}.
Our partial solution of Problem~\ref{ProbRea} is the following

\begin{Theorem}\label{Trea}
Let $Y$ be a connected finite cell complex of dimension $k-1\geq 2$ and let $\eta$ be a stable vector bundle over $Y$.
Set $(\pi,\omega)=(\pi_1(Y),\omega_\eta)$ and let $r\geq 2k$ be an integer.
Let $\calp\in \tilde K_0(\bbz\pi,\omega)$ be an $r$-self-dual element.
Suppose that 
$[\calp]\in H^{r+1}(\bbz_2;\tilde K_0(\bbz\pi,\omega))$ belongs to the kernel of the homomorphism $\delta_R$
of \eqref{ranicki}.
Then, there exists an $(Y,\eta$)-referred closed manifold $(M,q)$ of dimension $r$ 
such that $M$ is \has{k} and satisfies $q_*(w_k(M))=\calp$.
\end{Theorem}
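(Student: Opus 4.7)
The starting point is \proref{PreaPoin} applied to $\calp$: it furnishes an $(Y,\eta)$-referred \pe space $(\bbp,q)$ of formal dimension~$r$ which is \has{k} with $w_k(\bbp)=\calp$ and $\wa(\bbp)=\calp+(-1)^r\calp^*$, together with an $\eta$-normal \pe decomposition $\bbp\simeq N\cup_{\bd N}\bbt$, where $(N,q_{|N})$ is a stable $\eta$-thickening of $Y$ of dimension~$r$ and $\bbt$ is a finitely dominated \pe pair satisfying $w_k(\bbt)=\calp$ and $\wa(\bbt)=\wa(\bbp)$. The $r$-self-duality $\calp^*=(-1)^{r+1}\calp$ gives $\wa(\bbp)=\wa(\bbt)=\calp+(-1)^r(-1)^{r+1}\calp=0$, so $\bbp$ and $\bbt$ have the homotopy type of finite \pe complexes (rel boundary in the latter case). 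Since $N$ is already a smooth manifold, the problem reduces to replacing $\bbt$ by a compact smooth manifold~$T$ with $\bd T=\bd N$ via a homotopy equivalence rel~$\bd$; the sought manifold will then be $M:=N\cup_{\bd N}T$.

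The restricted reference map $q_{|\bbt}\:\bbt\to Y$ is $\eta$-normal by \lemref{Lnormal}(3), and transversality provides a degree-one normal map $\alpha\:(\caln,\bd\caln)\to(\bbt,\bd\bbt)$ whose boundary $\bd\alpha\:\bd\caln\to\bd\bbt=\bd N$ may be arranged to be the identity. Because $\bbt$ has the homotopy type of a finite \pe pair and $\pi_1(\bd\bbt)\approx\pi_1(\bbt)\approx\pi$, the rel-$\bd$ surgery obstruction $\sigma(\alpha)\in L^h_r(\pi,\omega)$ is well defined. The key ingredient is the identification
\[
\sigma(\alpha)\;=\;\delta_R([\calp])\;\in\;L^h_r(\pi,\omega)\,,
\]
which is the geometric content of the Ranicki map in \cite{PeRa}: beginning with an $r$-self-dual class in $\tilde K_0(\bbz\pi,\omega)$, the end-theoretic completion procedure (invoked through \proref{Pcompletion} in the proof of \proref{PreaPoin}) produces a finite $\eta$-normal \pe pair whose rel-$\bd$ $L^h$-surgery obstruction is, by construction, $\delta_R([\calp])$, and this obstruction depends only on the class~$[\calp]$.

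By hypothesis $\sigma(\alpha)=\delta_R([\calp])=0$, so $\alpha$ is normally cobordant rel~$\bd$ to a homotopy equivalence $f\:T\fl{\simeq}\bbt$ with $T$ a compact smooth $r$-manifold and $\bd T=\bd N$. Gluing produces the closed smooth $r$-manifold $M=N\cup_{\bd N}T$ together with an $(Y,\eta)$-reference map $g\:M\to Y$ defined by $q_{|N}$ on $N$ and $q_{|\bbt}\pcirc f$ on $T$. The obvious composite $M\simeq N\cup_{\bd N}\bbt\simeq\bbp$ is a homotopy equivalence compatible with the reference maps, so \lemref{WL2bb} combined with Part~(ii) of \proref{Pdecomp} gives $g_*(w_k(M))=q_*(w_k(\bbp))=\calp$, and $M$ is \has{k} because $\bbp$ is. The main difficulty in this scheme is the identification $\sigma(\alpha)=\delta_R([\calp])$: it requires a careful translation, inside the geometric Pedersen--Ranicki framework, between the homological definition of $w_k$, the Wall-type invariant $\calp+(-1)^r\calp^*$, and the algebraic definition of $\delta_R$ occurring in \eqref{ranicki}.
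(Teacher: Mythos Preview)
Your strategy is essentially the paper's: build $\bbp$ via \proref{PreaPoin}, use $r$-self-duality to force $\wa(\bbp)=0$ so $\bbp$ is finite, and then do surgery to a closed manifold. The rel-boundary surgery on $\bbt$ that you propose is equivalent to the paper's closed surgery on $\bbp$ (since over $N$ the normal map can always be taken to be a diffeomorphism, by uniqueness of stable thickenings), so this variation is harmless.

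The one place where your argument is thinner than the paper's is exactly the point you flag yourself: the identification $\sigma(\alpha)=\delta_R([\calp])$. You invoke it as ``the geometric content of the Ranicki map'' and appeal loosely to the end-completion procedure, but you never say what object carries the class $[\calp]$. The paper makes this precise by working one dimension up: it keeps the $(r+1)$-dimensional \pe pair $(\bbu,\bbp)$ with $\wa(\bbu)=\calp$, observes that $(\bbu,q)$ represents a class in the Pedersen--Ranicki geometric group $L_{r+1}^{1,p,h}(Y)$, and then reads off from the commutative ladder \eqref{pedranicki} that $\sigma^{p,h}(\bbu,q)=[\wa(\bbu)]=[\calp]$, that $\delta_{PR}(\bbu,q)=(\bbp,q)$, and hence $\sigma(\bbp,q)=\delta_R([\calp])$. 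In other words, the ``careful translation'' you allude to in your last sentence is exactly the step the paper carries out, and the key datum it uses---which your write-up suppresses---is the ambient finitely dominated $(r+1)$-dimensional $\bbu$ bounding $\bbp$. Without explicitly exhibiting $(\bbu,q)$ as the class in $L_{r+1}^{1,p,h}(Y)$, your identification remains an assertion rather than a proof.
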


\begin{proof}
We consider the \pe pair $(\bbu,\bbp)$ of the proof of \proref{PreaPoin}, equipped with its 
$(Y,\eta)$-reference map $q\:\bbu\to Y$ and its restriction $q\:\bbp\to Y$.
One has $\wa(\bbu)=\calp$ and $\wa(\bbp) = \calp + (-1)^{r}\calp^*$ (using Convention~\ref{Conv}).
Since $\calp=(-1)^{r+1}\calp^*$ by hypothesis, 
one has $\wa(\bbp)=0$ and, by \cite[Theorem~F]{WallFin}, $\bbp$ is homotopy equivalent to a finite complex.
Thus, by changing the pair $(\bbu,\bbp)$ by a homotopy equivalence, one may suppose 
that  $\bbp$ is a {\it finite} \pe space. 

Being $\eta$-normal, the map $q$ determines a surgery problem with target $\bbu$ (see~\ref{MPfnormal}).
In this language and using the point of view of \cite[pp.~242-44]{PeRa}, 
the data $(\bbu,q)$ represents a class
in the geometric L-group $L_{r+1}^{1,p,h}(Y)$. The definition of the latter is akin to that of 
$L_{r+1}^{1}(Y)=L_{r+1}^{1,s}(Y)$ (also working for $L_{r+1}^{1,h}(Y)$) given in \cite[Chapter~9]{WallBookSurgery}.
With these geometric L-groups, Pedersen and Ranicki obtain, for $r\geq 5$, an isomorphism $\sigma_*$ of exact sequences
\small
\beq{pedranicki}
\dia{
L_{r+1}^{1,h}(Y) \ar[r] \ar[d]_\approx^(0.45){\sigma^h}    & 
L_{r+1}^{1,p}(Y) \ar[r] \ar[d]_\approx^(0.45){\sigma^p} &
L_{r+1}^{1,p,h}(Y) \ar[r]^(0.5){\delta_{PR}} \ar[d]_\approx^(0.45){\sigma^{p,h}}&  
L_{r}^{1,h}(Y) \ar[r] \ar[d]_\approx^(0.45){\sigma^h} & 
L_{r}^{1,h}(Y) \ar[d]_\approx^(0.45){\sigma^p}
\\  
L_{r+1}^h(\pi) \ar[r] & L_{r+1}^p(\pi) \ar[r] &
H^{r+1}(\bbz_2;\tilde K_0(\bbz\pi)) \ar[r]^(0.65){\delta_{R}} &  L_{r}^h(\pi) \ar[r] & L_{r}^p(\pi)
}
\eeq  
\rm
where the bottom line is Ranicki's exact sequence~\eqref{ranicki}, writing $L^*_*(\pi)$ for $L^*_*(\pi,\omega)$
and $\tilde K_0(\bbz\pi)$ for $(\tilde K_0(\bbz\pi),\omega)$.
The arrows in \eqref{pedranicki} satisfy the following properties
\begin{itemize}
 \item $\sigma^{p,h}(\bbu,q) = [\wa(\bbu)]$;
 \item $\delta_{PR}$ sends the class of $(\bbu,q)$ to that of $(\bbp,q)$.
  \item $\sigma^h$ is the surgery obstruction measured in $L_r^h(\pi)$.
\end{itemize}
Hence, in the language of~\ref{MPfnormal}, one has 
$$
\sigma(\bbp,q)=\sigma^h\pcirc\delta_{PR}(\bbu,q) = \delta_R\pcirc\sigma^{p,h}(\bbu,q) = \delta_R(\calp) \, . 
$$
By hypothesis, $[\wa(\bbu)]=[\calp]\in\ker\delta_R$, hence, $\sigma(\bbp,q)=0$.
This means that there exists a $q^*\eta$-normal homotopy equivalence $\theta\:M\to\bbp$ 
where $M$ is a closed $r$-manifold. The latter thus admits the $(Y,\eta)$-reference map $q\pcirc\theta$.
Being homotopy equivalent to $\bbp$, the closed manifold $M$ is \has{k} and $w_k(M)=w_k(\bbp)=\calp$ by \lemref{WL2bb}.
\end{proof}

\subsection{Cohomology antisimple cobordisms}\label{SScob}

Let $n\geq 2k\geq 6$ be integers. A (compact) cobordism $(W,M,M')$ is called \dfn{\has{k}} if the following three conditions hold.
\begin{itemize}
\item $M$ and $M'$ are closed connected manifold of dimension $n$ which are \has{k}; 
\item The pairs $(W,M)$ and $(W,M')$ are weakly ($k-1$)-connected (see \ref{connectivity});
\item $W$ is \ccs{k}{n-k}. 
\end{itemize}
For example, an $h$-cobordism between closed connected \has{k} mani\-folds is \has{k}.

Recall from \eqref{tate} that a closed connected manifold $M$ of dimension $n$ which is \has{k} 
defines a class 
$[w_k(M)] \in H^{n+1}(\bbz_2;\tilde K_0(\bbz\pi_1(M),\omega_M))$.

\begin{Proposition}\label{PcobL2}
Let $(W,M,M')$ be a \has{k} cobordism between closed connected \has{k} manifolds of dimension $n\geq 2k\geq 6$. 
Then $[w_k(M)]=[w_k(M')]$ in $H^{n+1}(\bbz_2;\tilde K_0(\bbz\pi_1(W),\omega_W)$. 
\end{Proposition}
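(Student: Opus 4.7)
The plan is to apply Lemma~\ref{LXT} to both pairs $(W,M)$ and $(W,M')$ and to use Poincar\'e--Lefschetz duality on the $(n{+}1)$-dimensional cobordism $W$ to show that the resulting discrepancy $w_k(M)-w_k(M')$ is a Tate coboundary. Set $\pi=\pi_1(W)$ and $\omega=\omega_W$; since $k\geq 3$, the weak $(k-1)$-connectivity of the pairs forces $\pi_1(M)\approx\pi\approx\pi_1(M')$. First I would upgrade to strong $(k-1)$-connectivity by attaching trivial $(k-1)$-cells to $M$ and to $M'$, extending their inclusions into $W$ via mapping-cylinder constructions, so that $\pi_{k-1}(M)\to\pi_{k-1}(W)$ and $\pi_{k-1}(M')\to\pi_{k-1}(W)$ become surjective. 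By \remref{WR1}.2, this upgrade affects neither $w_k(M)$ nor $w_k(M')$: trivially attached $(k-1)$-cells enlarge $C_{k-1}(\tilde{\,})$ with zero boundary contribution, so $B_{k-1}(\tilde{\,})$ is unchanged, and a direct check shows the cohomology silence in degree $k$ is preserved.

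Lemma~\ref{LXT} now produces finitely generated projective $\bbz\pi$-modules $P=H_k(\tilde M,\tilde W)$ and $P'=H_k(\tilde M',\tilde W)$ satisfying $w_k(M)=w_k(W)+[P]$ and $w_k(M')=w_k(W)+[P']$, so
$$
w_k(M)-w_k(M')=[P]-[P']\in\tilde K_0(\bbz\pi,\omega).
$$
From \lemref{LW1}, \proref{has/chas} and the long exact sequences of the pairs, $H_j(\tilde W,\tilde M;\bbz\pi)$ vanishes for $j\leq k-1$ and for $k+1\leq j\leq n-k$, and likewise for $(W,M')$. Chain-level Poincar\'e--Lefschetz duality on the cobordism yields $H^{j}(\tilde W,\tilde M;\bbz\pi)\cong H_{n+1-j}(\tilde W,\tilde M';\bbz\pi)$. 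The universal-coefficient spectral sequence for $C_*(\tilde W,\tilde M;\bbz\pi)$ then degenerates at total degree $k$ (only $E_2^{0,k}=P^*$ survives, all other $E_2^{p,q}$ involving $H_q=0$ or $\mathrm{Ext}^{p\geq 1}$ of a projective module), giving $H_{n+1-k}(\tilde W,\tilde M';\bbz\pi)\cong P^*$ and, symmetrically, $H_{n+1-k}(\tilde W,\tilde M;\bbz\pi)\cong (P')^*$. The analogous argument for total degrees $i<k$ shows $H^i(\tilde W,\tilde M;\bbz\pi)=0$, whence $H_j(\tilde W,\tilde M;\bbz\pi)=0$ for every remaining $j>n+1-k$.

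Now take a relative handle decomposition of $W$ on $M$: the bounded chain complex $C_*(\tilde W,\tilde M;\bbz\pi)$ of finitely generated free $\bbz\pi$-modules has projective homology concentrated in degrees $k$ and $n+1-k$, with respective values $P$ and $(P')^*$. The Euler--Poincar\'e identity in $\tilde K_0(\bbz\pi,\omega)$ then reads
$$
0\;=\;\sum_j(-1)^j[C_j]\;=\;\sum_j(-1)^j[H_j]\;=\;(-1)^k[P]+(-1)^{n+1-k}[P']^*,
$$
which rearranges (using $(-1)^{n+2-2k}=(-1)^n$ and $[Q^*]=[Q]^*$) to $[P']=(-1)^n[P]^*$. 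Consequently
$$
w_k(M)-w_k(M')\;=\;[P]-[P']\;=\;[P]+(-1)^{n+1}[P]^*,
$$
which is the Tate coboundary of $[P]\in\tilde K_0(\bbz\pi,\omega)$, giving $[w_k(M)]=[w_k(M')]$ in $H^{n+1}(\bbz_2;\tilde K_0(\bbz\pi,\omega))$.

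The step I expect to be the main obstacle is the chain-level Poincar\'e--Lefschetz/UCT identification $H_{n+1-k}(\tilde W,\tilde M;\bbz\pi)\cong (P')^*$ compatibly with the involution $*$ on $\tilde K_0(\bbz\pi,\omega)$ attached to the orientation character of $W$: one must track carefully how the duality pairing intertwines with the $\omega$-twisted involution on $\bbz\pi$-modules, and verify the vanishing of all higher $E_r$-differentials in the UCT spectral sequence. Given that duality input and the vanishing of the other $H_j(\tilde W,\tilde M;\bbz\pi)$, the Euler--Poincar\'e calculation and the Tate-coboundary identification are routine.
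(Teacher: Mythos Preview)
Your Step~2 strategy---applying \lemref{LXT} at both ends, invoking Poincar\'e--Lefschetz duality on the cobordism, and finishing with an Euler--Poincar\'e identity on the relative handle chain complex---is essentially the paper's approach and is sound once the pairs $(W,M)$ and $(W,M')$ are genuinely $(k-1)$-connected. (There is a harmless sign slip: \lemref{LXT} reads $w_k(W)=w_k(M)+[H_k(\tilde W,\tilde M)]$, so $w_k(M)=w_k(W)-[P]$ rather than $+[P]$; this does not affect the Tate-coboundary conclusion.)

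The gap is in your Step~1. Attaching trivial $(k-1)$-cells to $M$ and passing to a mapping cylinder produces a CW-pair $(W,M^+)$ to which \lemref{LXT} applies, but $M^+$ is no longer a closed manifold and $(W,M^+,(M')^+)$ is no longer a Poincar\'e cobordism. Your subsequent appeal to chain-level Poincar\'e--Lefschetz duality, and your ``relative handle decomposition of $W$ on $M$'', both require the original manifold pair---for which $H_{k-1}(\tilde W,\tilde M)$ need not vanish and need not be projective. In effect you are using $M^+$ when you assert $H_j(\tilde W,\tilde M)=0$ for $j\leq k-1$, but the original $M$ when you invoke duality and handles; these are different objects, and the Euler--Poincar\'e step collapses once an uncontrolled $H_{k-1}$ term appears.

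The paper handles Step~1 geometrically: starting from a handle decomposition of $(W,M)$ with indices in $[k-1,n+2-k]$, it peels off the $(k-1)$-handles at each end, producing manifold cobordisms $(V,M,M_1)$ and $(V',M',M_1')$ built entirely from $(k-1)$-handles, together with a residual manifold cobordism $(W_1,M_1,M_1')$ whose pairs are now $(k-1)$-connected. One then checks (facts (1)--(4) in the paper) that $w_k(M_1)=w_k(M)$, $w_k(M_1')=w_k(M')$, and that $W_1$ remains \ccs{k}{n-k}. After this reduction, your Step~2 argument (equivalently, the paper's explicit chain computation) applies verbatim to $(W_1,M_1,M_1')$.
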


\begin{proof}
The proof involves two steps.

\sk{1}\noindent{\it Step 1: Reduction to the case where $(W,M)$ and $(W,M')$ are ($k-1$)-connected.} \  
The cobordism $(W,M,M')$ admits a handle decomposition of the form
$$
M\times I = \calh_{k-1} \subset  \calh_k \subset\cdots\subset \calh_{n+2-k} = W \ ,
$$
where $\calh_j$ is the union of handles of index $\leq j$. Indeed, since the pair $(W,M)$ is weakly ($k-1$)-connected, it is 
($k-2$)-connected and the procedure of eliminating handles in a cobordism (see \cite{KeBMS}), 
applied to a given handle decomposition $\calh$, permits us to 
to get rid of the handles of index $<k-1$ and $>n+2-k$. The same procedure applied to the dual handle decomposition $\calh'$
makes it of the form
$$
M'\times I = \calh_{k-1}' \subset  \calh_k' \subset\cdots\subset \calh_{n+2-k}' = W \ .
$$
The manifold $V=\calh_{k-1}$ and $V'=\calh_{k-1}'$ give cobordisms $(V,M,M_1)$ and $(V',M',M_1')$ having 
handle decomposition involving only handles of index $k-1$. One has $W=V\cup W_1\cup V'$, 
defining a cobordism $(W_1,M_1,M_1')$. The inclusions of the above manifolds into $W$ all induce isomorphisms on
fundamental groups with orientation characters, so all these will be identified with $(\pi,\omega)=(\pi_1(W),\omega_W)$.
One has the following facts.
\begin{itemize}
\item[(1)] {\it $V$ is \ccs{k}{n-k}.} \ Indeed, for any $\bbz\pi$-module $R$, 
the exact sequence $H^{j}(V,M;R) \to H^j(V;R) \to H^j(M;R)$ implies the claim, since
$M$ is \has{k} and $H^{j}(V,M;R)=0$ if $j\neq k-1$.
\item[(2)] {\it $w_k(V)=w_k(M)$.} \ Let $N$ be the union of handles of index $\leq k-1$ in a handle decomposition of $M$
and let $T=M\psetminus {\rm int N}$. Since the pair $(M,N)$ is ($k-1$)-connected, the 
the ($k-1$)-handles of $V$ might be attached on $N$, producing a cobordism $N_+$ from $N$, and 
$V=N_+ \cup_{\bd N \times I} (T\times I)$. The pair $(V,N_+)$ is ($k-1$)-connected and 
$$
H_k(\tilde V,\tilde N_+) \approx H_k(\tilde T,\widetilde{\bd N}) \approx H_k(M,N) \, .  
$$
As $w_k(V)=[H_k(\tilde V,\tilde N_+)]$ and $w_k(M) =[H_k(\tilde M,\tilde N)]$, the claim follows.
\item[(3)] {\it $M_1$ is \has{k} and $w_k(M_1)=w_k(M)$.} \ This follows from (1) and (2), since $(V,M_1)$
is ($n+2-k$)-connected and $n+2-k>n-k>k$. 
\item[(4)] {\it $W_1$ is \ccs{k}{n-k}.} \ As $(W,W_1)$ is ($n+2-k$)-connected,
this follows from the corresponding property of $W$. 
\end{itemize}
Points (1)--(3) also hold true for $(V',M')$. Therefore, it is equivalent to prove \proref{PcobL2} for $(W,M,M')$
or for  $(W_1,M_1,M_1')$, so we may assume that $(W,M)$ and $(W,M')$ are ($k-1$)-connected.

\sk{1}\noindent{\it Step 2: Proof of \proref{PcobL2} when $(W,M)$ and $(W,M')$ are ($k-1$)-connected.} \ 
Applying \lemref{LXT} to both ends of $W$ gives the equalities
\beq{PcobL2-e1}
w_k(M) + [H_k(W,M)] = w_k(W) = w_k(M') + [H_k(W,M')]  \, .
\eeq
We will compute  $[H_k(W,M')]$. As in Step 1, one checks that the 
cobordism $(W,M,M')$ admits a handle decomposition of the form
$$
M\times I = \calh_k \subset  \calh_{k+1} \subset\cdots\subset \calh_{n+1-k} = W \ ,
$$
where $\calh_j$ is the union of handles of index $\leq j$. 
Since $n\geq 2k$, one has $n+2-k>k$. The chain complex $C_*(\tilde W,\tilde M)$ is thus the equivalent to 
$$
0 \to C_{n+1-k}(\tilde \calh) \to\cdots\to C_k(\tilde \calh) \to 0 \, .
$$
Write $C_j=C_j(\tilde\calh)$, $Z_j=Z_j(\tilde\calh)$ and $B_j=B_j(\tilde\calh)$.
For $k<j<n+1-k$, one has $H_j(\tilde W)=0=H_j(\tilde M)$ by \lemref{LW1}.
Therefore, for $k<j<n+1-k$, $H_j(\tilde W,\tilde M)=0$, and thus $Z_j=B_j$. 
One then has exact sequences
$$
\begin{array}{lll}
 0\to B_{k}\to C_k \to H_k(\tilde W,\tilde M)\to 0 \\
 0\to B_{k+1}\to C_{k+1} \to B_{k}\to 0\\
 \hskip 9mm \cdots \ etc \, \cdots \\
 0\to H_{n+1-k}(\tilde W,\tilde M) \to C_{n+1-k} \to B_{n-k}\to 0
\end{array}
$$
As $H_k(\tilde W,\tilde M)$ is finitely generated and projective, so are all the above modules
and one has 
$$
[H_{k}(\tilde W,\tilde M)]= -[B_k]=[B_{k+1}]=-[B_{k+2}]\cdots = (-1)^{r+1}[B_{k+r}]  \, .
$$
As, $[H_{n+1-k}(\tilde W,\tilde M)]=-B_{n-k}$, one has  
\beq{PcobL2-e2}
[H_{n+1-k}(\tilde W,\tilde M)] = (-1)^{n}[H_{k}(\tilde W,\tilde M)] \, . 
\eeq
Now, by \pe duality, 
$$
H_{n+1-k}(\tilde W,\tilde M)=H_{n+1-k}(W,M;\bbz\pi)
\approx H^{k}(W,M';\bbz\pi) \, .
$$
To compute the latter, one uses the handle decomposition $\calh'$ of $(W,M')$ which 
is the dual handle decomposition to $\calh$. 
Then, $\calh'$ has no handle of index $<k$. Therefore
\begin{eqnarray*}
 H^{k}(W,M';\bbz\pi) &\approx & \ker\big(C_k(\tilde\calh')^*\fl{\delta} C_{k+1}(\tilde\calh')^*\big) \nonumber
 \\ &=&
\{\alpha\in C_k(\tilde\calh')^*\mid\alpha\pcirc\partial=0\} \nonumber \\ &\approx & H_k(\tilde W,\tilde M')^* \, .
\end{eqnarray*}
Thus, $ H_k(\tilde W,\tilde M')\approx H^{k}(W,M';\bbz\pi)^*$ and, by \eqref{PcobL2-e1} and \eqref{PcobL2-e2}, one has
\begin{eqnarray*}
w_k(M') &=& w_k(M) + [H_k(\tilde W,\tilde M)] - [H_k(\tilde W,\tilde M')] \\  &=&
w_k(M) + [H_k(\tilde W,\tilde M)] - [H^{k}(W,M';\bbz\pi)]^* \\  &=&
w_k(M) + [H_k(W,M)] - [H_{n+1-k}(\tilde W,\tilde M)]^* \\  &=&
w_k(M) + [H_k(W,M)] + (-1)^{n+1} [H_k(W,M)]^*   \ ,
\end{eqnarray*}
which proves the proposition. 
\end{proof}

The rest of this section is devoted to the proof of the following result.

\begin{Theorem}\label{TcobL2}
Let $M$ be a closed connected \has{k} manifold of dimension $n\geq 2k\geq 6$. 
The following conditions are equivalent.
\begin{itemize}
 \item[(a)] $M$ is cohomology $k$-antisimply cobordant to an \as{k} closed manifold.
 \item[(b)] $[w_k(M)]=0$ in $H^{n+1}(\bbz_2;\tilde K_0(\bbz\pi_1(M),\omega_M)$. 
\end{itemize}
\end{Theorem}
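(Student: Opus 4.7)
The implication (a) $\Rightarrow$ (b) is immediate: Proposition~\ref{PcobL2} gives $[w_k(M)]=[w_k(M')]$ in the Tate cohomology group, and $w_k(M')=0$ by Proposition~\ref{Pcasas} applied to the $k$-antisimple manifold~$M'$.

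For (b) $\Rightarrow$ (a), since $[w_k(M)]=0$ I can write $w_k(M) = \calp + (-1)^{n+1}\calp^*$ for some $\calp\in\tilde K_0(\bbz\pi_1(M),\omega_M)$. The identity
\begin{equation*}
w_k(M') = w_k(M) + [H_k(\tilde W,\tilde M)] + (-1)^{n+1}[H_k(\tilde W,\tilde M)]^*
\end{equation*}
established in Step~2 of the proof of Proposition~\ref{PcobL2} shows that for any cohomology $k$-antisimple cobordism $(W,M,M')$ in which $(W,M)$ is $(k-1)$-connected and $[H_k(\tilde W,\tilde M)]=-\calp$, one has $w_k(M')=0$; Proposition~\ref{Pcasas} then identifies $M'$ as $k$-antisimple. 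So the task reduces to constructing such a cobordism~$W$.

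To build $W$, I would transcribe into the manifold category the cell-by-cell construction of Proposition~\ref{PW3} applied to the class~$-\calp$. Choose a finitely generated free $\bbz\pi_1(M)$-module $F\approx P\oplus Q$ with $[P]=-\calp$. Attach $k$-handles to $M\times\{1\}\subset M\times I$ indexed by a basis of~$F$ along null-homotopic framed attaching spheres, then alternately attach $(k+1)$-, $(k+2)$-,\ldots handles realizing the complementary projections of~$F$ exactly as in the proof of~\ref{PW3}, up through handle index~$n-k+1$. The resulting relative cellular chain complex $C_*(\tilde W,\tilde M)$ coincides with the one produced in~\ref{PW3}, so $H^j(W,M;R)=0$ for $k\leq j\leq n-k$ with any local coefficient module~$R$ and $[H_k(\tilde W,\tilde M)]=-\calp$. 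The long exact cohomology sequence of the pair~$(W,M)$ combined with the cohomology $k$-antisimplicity of~$M$ then shows $W$ is \ccs{k}{n-k}, and Poincar\'e--Lefschetz duality forces $(W,M')$ to be $(k-1)$-connected as well.

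The main obstacle is the geometric realization step: each prescribed chain-level attaching class must be realized by a smooth framed embedding of a sphere in the current top-boundary manifold. As in the thickening arguments of Section~\ref{Srea} and in the proof of Theorem~\ref{Trea}, the hypothesis $n\geq 2k\geq 6$ places every attachment in the stable range, where the required embeddings exist by the Hurewicz theorem applied to the universal cover followed by general position.
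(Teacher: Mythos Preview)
Your implication (a) $\Rightarrow$ (b) is fine. The gap is in (b) $\Rightarrow$ (a): the cobordism $W$ you build is \emph{not} cohomology $(k,n-k)$-silent, so it is not a cohomology $k$-antisimple cobordism and the formula from Step~2 of \proref{PcobL2} cannot be invoked.

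Concretely, with $k$-handles attached along null-homotopic spheres and then $(k{+}1)$-, $(k{+}2)$-,\dots\ handles realizing the alternating projections, the relative chain complex $C_*(\tilde W,\tilde M)$ is
\[
0 \;\longrightarrow\; F \;\xrightarrow{\ \mathrm{pr}\ }\; \cdots \;\xrightarrow{\ \mathrm{pr}_Q\ }\; F \;\longrightarrow\; 0
\qquad(\text{degrees } n{-}k{+}1,\dots,k),
\]
so $H^k(W,M;R)=\ker\big(\alpha\mapsto\alpha\circ\mathrm{pr}_Q\big)=\hom_{\bbz\pi}(P,R)$, which is nonzero whenever $[P]\neq 0$. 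Your claim that $H^j(W,M;R)=0$ for $k\le j\le n-k$ therefore fails at $j=k$. Moreover, because the $k$-handles are attached trivially, the boundary $C_k(\tilde W,\tilde M)\to C_{k-1}(\tilde M)$ is zero, hence the connecting homomorphism $H^{k-1}(M;R)\to H^k(W,M;R)$ vanishes and the long exact sequence gives $H^k(W;R)\approx \hom_{\bbz\pi}(P,R)\neq 0$. Thus $W\notin\ecs{k}{n-k}$.

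This is not an accident of bookkeeping: in \proref{PW3} the vanishing of $H^k$ depends essentially on the extra $(k{-}1)$-cells (the wedge $\cals$), which supply $(k{-}1)$-cochains whose coboundaries kill the $k$-cocycles supported on $P$. Your transcription omits any analogue of these, and attaching $(k{-}1)$-handles instead would destroy the $(k{-}1)$-connectivity of $(W,M)$ that you need for the \proref{PcobL2} formula, while also forcing $H_k(\tilde W,\tilde M)=0$, so the prescribed class $-\calp$ would be lost.

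The paper's route is genuinely different and more elaborate: it first produces (\lemref{LcobL2}) an $(Y,\eta)$-referred $(n{+}1)$-manifold $A$ with $\bd A$ cohomology $k$-antisimple and $w_k(\bd A)=-\calp$, using the end-completion of an open thickening together with a \pe gluing and Wall's $(\pi\!-\!\pi)$ theorem. Only then is the desired cobordism assembled by splicing $A$ against $T\times I$ (where $M=N\cup T$ as in \proref{Pdecomp}), and the cohomology-silence of the pieces is checked via Mayer--Vietoris (\lemref{Ldececs}). The hypothesis $[\calp]=0$ enters through the vanishing of a Wall finiteness obstruction needed to make the glued \pe pair finite before doing surgery, not through a direct handle prescription as you attempt.
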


We need some preliminary results before starting the proof.
For two integer $s\leq t$, we denote by $\ecs{t}{s}$ the class of CW-spaces which are \ccs{s}{t}.

\begin{Lemma}\label{Ldececs}
Let $X=X_-\cup X_+$, $X_-\cap X_+=X_0$ be a CW-decomposition. Suppose that $X_\pm$ and $X_0$ are in $\ecs{s}{t}$
and that $H^{s-1}(X_+;R)\to H^{s-1}(X_0;R)$ is onto for any $\bbz\pi_1(X)$-module $R$. Then $X\in\ecs{s}{t}$. 
\end{Lemma}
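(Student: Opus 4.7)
The strategy is to apply the Mayer--Vietoris long exact sequence in cohomology with local coefficients. Setting $\pi=\pi_1(X)$ and letting $p\:\tilde X\to X$ denote the universal cover, the CW-decomposition yields a short exact sequence of free $\bbz\pi$-module chain complexes
$$
0\to C_*(p^{-1}(X_0))\to C_*(p^{-1}(X_-))\oplus C_*(p^{-1}(X_+))\to C_*(\tilde X)\to 0.
$$
Applying $\hom_{\bbz\pi}(-,R)$ preserves exactness because the terms are free, and via the change-of-rings identification $C_*(p^{-1}(X_\pm))\approx C_*(\tilde X_\pm)\otimes_{\bbz\pi_1(X_\pm)}\bbz\pi$ (and the analogous one for $X_0$), applied componentwise, the resulting cochain complexes compute $H^*(X_\pm;R)$ and $H^*(X_0;R)$, where $R$ is pulled back along the inclusion-induced homomorphisms on $\pi_1$. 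This gives the long exact sequence
$$
\cdots\to H^{j-1}(X_0;R)\to H^j(X;R)\to H^j(X_-;R)\oplus H^j(X_+;R)\to H^j(X_0;R)\to\cdots.
$$

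With this in hand, I would deduce the vanishing of $H^j(X;R)$ for $s\leq j\leq t$ in two cases. If $s+1\leq j\leq t$, then both $j$ and $j-1$ lie in $[s,t]$, so all four flanking groups vanish because $X_\pm$ and $X_0$ belong to $\ecs{s}{t}$, forcing $H^j(X;R)=0$. For the boundary case $j=s$, the groups $H^s(X_\pm;R)$ still vanish, so $H^s(X;R)$ is identified with the cokernel of the restriction map $(i^-_*,-i^+_*)\:H^{s-1}(X_-;R)\oplus H^{s-1}(X_+;R)\to H^{s-1}(X_0;R)$. The hypothesis that $H^{s-1}(X_+;R)\to H^{s-1}(X_0;R)$ is surjective already makes this combined map surjective, so the cokernel is zero and $H^s(X;R)=0$.

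Since $R$ was an arbitrary $\bbz\pi$-module, the definition of $\ecs{s}{t}$ is satisfied. I do not foresee a genuine obstacle here: the only technical point worth checking is the identification of $\hom_{\bbz\pi}(C_*(p^{-1}(X_\pm)),R)$ with the standard local-coefficient cochain complex of $X_\pm$ when the inclusions need not induce isomorphisms on $\pi_1$, and this is exactly what the componentwise change-of-rings formula supplies.
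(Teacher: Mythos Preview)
Your proof is correct and follows the same Mayer--Vietoris argument as the paper: the paper simply writes down the exact sequence $H^{j-1}(X_0;R)\to H^j(X;R)\to H^j(X_-;R)\oplus H^j(X_+;R)$ and observes that the hypotheses kill the outer terms (using that the surjectivity assumption forces $\delta^{s-1}=0$). Your treatment is more explicit about the change-of-rings step needed to identify the terms of the sequence with the local-coefficient cohomology of $X_\pm$ and $X_0$, but the substance is identical.
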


\begin{proof}
Let $R$ be $\bbz\pi_1(X)$-module and consider the Mayer-Vietoris sequence
$$
\dia{ 
H^{j-1}(X_0;R) \ar[r]^(.55){\delta^{j-1}} & H^j(X;R) \ar[r] & H^j(X_-;R)\oplus H^j(X_+;R)
}
$$
By our hypotheses, $X_0\in\ecs{s}{t}$ and $\delta^{s-1}=0$, which implies the lemma.
\end{proof}

\begin{Lemma}\label{LcobL2}
Let $Y$ be a connected finite cell complex of dimension $k-1\geq 2$ and let 
$\eta$ be a stable vector bundle over $Y$.
Let $n\geq 2k$ be an integer.
Let $\calp\in \tilde K_0(\bbz\pi_(Y),\omega_\eta)$ being $n$-self-dual.
Suppose that $\calp$ represents $0$ in $H^{n+1}(\bbz_2;\tilde K_0(\bbz\pi(Y),\omega_\eta))$.
Then, there exists an $(Y,\eta$)-referred compact ($n+1$)-dimensional manifold $(A,q)$ (see below) such that
\begin{itemize}
 \item[(a)] $M=\bd A$ is \has{k} and satisfies $q_*(w_k(M))=\calp$.
 \item[(b)] $A\in\ecs{k}{n-k}$. 
 \end{itemize}
\end{Lemma}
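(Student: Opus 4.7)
The plan is to apply \proref{PreaPoin} with input class $\calp$ and parameter $r=n$; the $n$-self-duality of $\calp$ (built into the Tate-cohomology hypothesis) will force the Wall obstruction of its Poincar\'e boundary to vanish, so the underlying open $(n{+}1)$-manifold can be Siebenmann-completed, and the remaining thickening boundary can then be closed off by adjoining a fresh stable $\eta$-thickening of $Y$.

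Unpacking \proref{PreaPoin} with these data: one obtains a $(Y,\eta)$-referred Poincar\'e space $(\bbp,q)$ of formal dimension $n$ with $w_k(\bbp)=\calp$ and $\wa(\bbp)=\calp+(-1)^n\calp^*$; substituting $\calp^*=(-1)^{n+1}\calp$ from self-duality gives $\wa(\bbp)=0$. Inspecting the proof of that proposition, $(\bbp,q)$ arises via \proref{Pcompletion} as the Poincar\'e end of an open $(n{+}1)$-manifold $U$ whose single tame end $\epsilon$ satisfies $\sigma(\epsilon)=\wa(\bbp)=0$; here $U$ carries a compact boundary $\bd U=\bd U_0$ coming from an initial stable $\eta$-thickening $U_0$ of $Y$ of dimension $n+1$. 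Siebenmann's theorem then completes $U$ to a compact $(n{+}1)$-manifold $\bar U$ with $\bd\bar U=\bd U\sqcup M$, where $M$ is a closed $n$-manifold homotopy equivalent to $\bbp$; thus $M$ is \has{k} and satisfies $q_*(w_k(M))=\calp$ by \lemref{WL2bb}.

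To kill the auxiliary boundary component $\bd U$, take a second copy $N$ of the stable $\eta$-thickening of $Y$ of dimension $n+1$; by \proref{PWallThick} its boundary can be identified in a reference-compatible way with $\bd U$, and I set
\[A:=\bar U\cup_{\bd U}N,\]
a compact $(n{+}1)$-manifold with $\bd A=M$ carrying the combined $(Y,\eta)$-reference map, establishing~(a). To verify~(b), i.e.\ $A\in\ecs{k}{n-k}$, apply \lemref{Ldececs} to the decomposition $A=\bar U\cup N$ with intersection $\bd U$: $\bar U\simeq U\simeq L$ (the $k$-dimensional complex of \proref{PW4}) is \cs in every degree $\ge k$; $N\simeq Y$ is $(k{-}1)$-dimensional; and $\bd U$ is \as{k} by \remref{RBdThick}. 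The Mayer--Vietoris surjectivity hypothesis in degree $k-1$ is in fact an isomorphism $H^{k-1}(N;R)\fl{\approx}H^{k-1}(\bd N;R)$, since $(N,\bd N)$ is $(n{+}1{-}k)$-connected with $n{+}1{-}k\ge k+1$.

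The hard part I expect is pinning down the precise role of the strictly stronger Tate-cohomology hypothesis: the argument above appears to use only $n$-self-duality of $\calp$ (enough to force $\wa(\bbp)=0$), so the assumption $[\calp]=0\in H^{n+1}(\bbz_2;\tilde K_0(\bbz\pi,\omega))$ must enter through a subtler compatibility I have not isolated---most plausibly when reconciling the $\eta$-normal/Spivak decorations at the gluing $\bd U=\bd N$ via \proref{PPdecNor}, or through a choice of $\calq$ with $\calp=\calq+(-1)^{n+1}\calq^*$ that is needed to align the construction with the subsequent cobordism-theoretic application of the lemma in \thref{TcobL2}.
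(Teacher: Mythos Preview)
Your argument has a genuine gap at the Siebenmann step. You assert that the end obstruction satisfies $\sigma(\epsilon)=\wa(\bbp)$, but this is a misidentification: as recorded in the proof of \proref{PreaPoin}, one has $\sigma(\epsilon)=\wa(U)=\wa(\bbu)=\calp$, whereas $\wa(\bbp)=\calp+(-1)^n\calp^*$ is the \emph{boundary} formula from \proref{Pcompletion}. Since $\calp\neq 0$ in the cases of interest, $U$ cannot be completed to a compact manifold; indeed, if it could, $\bar U\simeq U$ would be a finite complex, contradicting $\wa(U)=\calp$. So your step ``Siebenmann's theorem then completes $U$'' fails, and with it the construction of $A$.

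This is exactly where the Tate hypothesis $[\calp]=0$ does the work you could not locate. Writing $\calp=-\calq-(-1)^{n+1}\calq^*$, the paper applies \proref{PreaPoin} a \emph{second} time, now with $r=n+1$ and input $\calq$, to produce a \pe piece $\bbt_1$ with boundary a stable $\eta$-thickening and with $\wa(\bbt_1)=\calq+(-1)^{n+1}\calq^*=-\calp$. Gluing $\bbt_1$ to the cobordism $\bbe$ (where $\bbu=U_0\cup\bbe$ and $\wa(\bbe)=\calp$) along their thickening boundaries yields a \pe pair $(\bba,\bbp)$ with $\wa(\bba)=\calp+(-\calp)=0$; together with $\wa(\bbp)=0$ from self-duality, this makes $(\bba,\bbp)$ homotopy-finite, and Wall's $(\pi\hbox{-}\pi)$-theorem then replaces it by the desired compact manifold pair $(A,M)$. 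The verification that $\bba\in\ecs{k}{n-k}$ goes via \lemref{Ldececs} much as you sketched, but applied to $\bba=\bbe^+\cup\bbt_1$ rather than to a (nonexistent) $\bar U\cup N$. In short: $n$-self-duality alone kills $\wa(\bbp)$ but not $\wa(\bbe)$; the Tate hypothesis supplies the counter-term $\bbt_1$ needed to kill the latter.
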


Here, an $(Y,\eta$)-referred manifold $(A,q)$ means that $q\:A\to Y$ and as well as $q_{|\bd A}$ are both
$(Y,\eta)$-reference maps.

\begin{proof}
We set $(\pi,\omega)=(\pi_1(Y),\omega_\eta)$. All spaces are equipped with a $(Y,\eta)$-reference map, so we use Convention~\ref{Conv} throughout the proof. Reference maps are often dropped from the notation. 
 
We refer to the proof of \proref{PreaPoin} for $r=n$ and use its notations.
There, a \pe pair $(\bbu,\bbp)$ of formal dimension $n+1$
is constructed, together with an $(Y,\eta)$-reference map $q\:\bbu\to Y$, such that
$\wa(\bbu)=\calp$ and $\wa(\bbp) = \calp + (-1)^{n}\calp^*$.
Moreover, $\bbp$ is \has{k} with $w_k(\bbp)=\calp$. The space $\bbu$ is a completion of the one-end open manifold $U$
of \eqref{Trea-e1}, satisfying $\wa(U)=\calp$. We mentioned that $U=U_0\cup E_0$ where $(U_0,q_{|U_0})$ is a 
stable $\eta$-thickening of $Y$. Thus, $\bbu=U_0\cup \bbe$, where $(\bbe,\bd U_0,\bbp)$ is a \pe cobordism with $\wa(\bbe)=\calp$. As the pair $(\bbu,\bbe)$ is highly connected, the restriction $q_{\scriptscriptstyle\bbe}\:\bbe\to Y$  of $q$ to $\bbe$ is an $Y$-reference map. By \corref{CPdecNor2}, $q_{\scriptscriptstyle\bbe}$
is $\eta$-normal.

The hypothesis that $\calp$ represents $0$ in $H^{n+1}(\bbz_2;\tilde K_0(\bbz\pi,\omega))$ means that \\
$\calp=-\calq-(-1)^{n+1}\calq^*$ for some $\calq\in \tilde K_0(\bbz\pi,\omega)$. 
We now use \proref{PreaPoin} for $r=n+1$, replacing $\calp$ by $\calq$. 
One gets a an $(Y,\eta$)-referred \pe complex (without boundary) 
$(\bbp_1,q_1)$ of formal dimension $n+1$, having an $\eta$-normal \pe decomposition
$$
\bbp_1 \simeq N_1 \cup \bbt_1 \ , \ N_1\cap \bbt_1 = \bd N_1  = \bd\bbt_1 \ ,
$$
where $(N_1,{q_1}_{|N_1})$ is a stable $\eta$-thickening of $Y$ in dimension $n+1$ and $\bbt_1$ 
is an $(Y,\eta)$-referred \pe space with boundary $\bd(\bbt_1)=\bd N_1$
such that $\wa(\bbt_1) = \calq + (-1)^{n+1}\calq^*$.

As $q$ and $q_1$ are $\eta$-normal, one can choose Spivak classes $(\bbu)\in\calh_{n+1}(\bbu,\bd\bbu)$ and 
$(\bbp_1)\in\calh_{n+1}(\bbp_1)$. By \proref{PPdecNor} (ii), they induce Spivak classes 
$(U_0)\in\calh_{n+1}(U_0,\bd U_0)$ and $(N_1)\in\calh_{n+1}(N_1,\bd N_1)$. By \proref{LSpiThick},
there exists a simple homotopy equivalence $f\:(U_0,\bd U_0)\to (N_1,\bd N_1)$ commuting with the $Y$-reference
maps and satisfying $\calh_*f((U_0))=(N_1)$. Form the space 
$$
\bba = \bbe \cup_{\bd(U_0)} \calc(f) \cup_{\bd N_1=\bd \bbt_1} \bbt_1 \, ,
$$
where $\calc(f)$ is the mapping cylinder of $f_{|\bd U_0}$:
$$
\calc(f) = \bd U_0 \times [0,1] \dcup \bd N_1 \big/ \{(x,1)\sim f(x)\} \, .
$$
Set $\bbe^+ = \bbe \cup \calc(f)$ and $\bd\bbe^+ = \bbp \dcup \bd\bbt_1$.

That $f$ commutes with the reference maps on $\bbe^+\simeq \bbe$ and $\bbt_1$
implies that these references maps extend to a map 
$q_{\scriptscriptstyle\bba}\:\bba\to Y$. This map $q_{\scriptscriptstyle\bba}$ is an $Y$-reference map.
Indeed, all fundamental groups under consideration are isomorphic and identified with~$\pi$. One has a morphism of 
of Mayer-Vietoris sequences
$$
\dia{ 
H_j(\widetilde{\partial N_1}) \ar[r] \ar[d]^{(q_N)_*}   & 
H_j(\tilde\bbe^+)\oplus H_j(\tilde\bbt_1) \ar[r]  
\ar[d]^{(q_{\scriptscriptstyle\bbe})_*\oplus (q_{\scriptscriptstyle{\bbt_1}})_*}    & 
H_j(\tilde\bba) \ar[d]^{(q_{\scriptscriptstyle\bba})_*}
\ar[r]^(.45
){\partial_{MV}} & H_{j-1}(\widetilde{\partial N_1})  \ar[d]^{(q_N)_*}
\\ 
H_j(\tilde Y) \ar[r] & H_j(\tilde Y)\oplus H_j(\tilde Y) \ar[r] & H_j(\tilde Y) \ar[r]^(.45)0 & H_{j-1}(\tilde Y)
}
$$
where the bottom line is the Mayer-Vietoris sequence of $Y=Y\cup Y$, $Y\cap Y=Y$.
The homomorphism $(q_N)_*\: H_j(\widetilde{\partial N_1})\to H_j(\tilde Y)$ is an isomorphism for $j\leq n+2-k > k$,
whence $\partial_{MV}=0$ on this range. As $q_{\scriptscriptstyle\bbe}$ and $q_{\scriptscriptstyle{\bbt_1}}$
are $Y$-reference maps, one deduces that $q_{\scriptscriptstyle\bba}$ is an $Y$-reference map.

We now refer to Diagram~\eqref{PdecNor-dia} and \proref{PPdecNor}, 
with $X_-=\bbe^+$, $X_+=\bbt_1$ and $Z_0=\bd\bbt_1$. 
Since $\calh_*f((U_0))=(N_1)$, one has $\partial_\calh((\bbe^+),-(\bbt_1))=0$.
By \corref{CPdecNor},  $(\bba,\bbp)$ is a \pe pair of formal dimension~$n+1$ and $q_{\scriptscriptstyle\bba}$ 
is an $(Y,\eta)$-reference map.

We claim that $\bba\in \ecs{k}{n-k}$. Indeed, one has the following
\begin{itemize}
 \item $\bbe^+\in \ecs{k}{n-k}$. For, $E_0\in\ecs{k}{n-k}$ by \proref{Pdecomp} (for $r=n+1$), so
 $\bbe^+\simeq\bbe\simeq E_0\in \ecs{k}{n-k}$.
 \item $\bbt_1\in \ecs{k}{n+1-k}$ by \proref{PreaPoin} (for $r=n+1$).
 \item $\bd U_0 \in \ecs{k}{n-k}$ by \remref{RBdThick}.
\end{itemize}
As the inclusion $\bd U_0\hookrightarrow\bbt_1$ homotopy commutes with the $Y$-reference maps, it induces
a homomorphism $H^{k-1}(\bbt_1;R)\to H^{k-1}(\bd U_0;R)$ which is onto for for any $\bbz\pi_1(X)$-module $R$.
By \lemref{Ldececs}, we deduce that $\bba \in \ecs{k}{n-k}$.

By Siebenmann's sum theorem \cite[Theorem~6.5]{SieTh}, one has
$$
\wa(\bba) = \wa(\bbe) + \wa(\bbt_1) = \calp + \calq + (-1)^{n+1}\calq^* =0 \, .
$$
On the other hand, one has $\wa(\bbp)=\calp+ (-1)^n\calp^*$ by \proref{PreaPoin}. 
Since $\calp=(-1)^{n+1}\calp^*$ by hypothesis, one has $\wa(\bbp)=0$.
By \cite[Theorem~F]{WallFin}, $(\bba,\bbp)$ has the homotopy type of a {\it finite} \pe pair. 
The surgery obstruction $\sigma(\bba,q_{\scriptscriptstyle\bba})$ (see~\ref{MPfnormal}) is thus defined in 
$L^h_{n+1}(\pi,\pi,\omega)=0$ by Wall's ($\pi-\pi$)-theorem.
Thus, up to homotopy equivalence,
one may replace $(\bba,\bbp)$ is an $(Y,\eta)$-referred {\it compact manifold pair} $(A,M)$ of dimension $n+1$.
The manifold pair $(A,M)$ has the required properties for \lemref{LcobL2}.
\end{proof}

\begin{proof}[Proof of \thref{TcobL2}]
That $(b)\!\Rightarrow\! (a)$ is guaranteed by \proref{PcobL2}.
For the converse, let us start with some preliminary constructions. Choose a handle decomposition of $M$ 
and let $N$ be the union of handles of index $\leq k-1$. Then, $N$ is a stable $n$-thickening
of a connected finite subcomplex $Y$ of $M$ of dimension $k-1$. By the classification of stable thickening
(see the proof of \thref{PWallThick}), $N$ is determined up to diffeomorphism
by the the stable vector bundle $\nu$ over $Y$, i.e. the restriction of the stable normal bundle of $M$ over $Y$.
Let $(\pi,\omega)=(\pi_1(Y),\omega_\nu)$. The inclusion $Y\subset M$, which is ($k-1$)-connected, induces an identification
$(\pi_1(M),\omega_M)\approx (\pi,\omega)$. Using this identification, let $\calp=w_k(M)\in\tilde K_0(\bbz\pi,\omega)$.
Set $T=M\psetminus {\rm int} N$. 

Let $(A,q)$ be an $(Y,\nu)$-referred compact manifold of dimension $n+1$, as produced by \lemref{LcobL2}, 
so that $\bd A$ is a closed connected \has{k} manifold with $w_k(\bd A)=-\calp \in\tilde K_0(\bbz\pi,\omega)$
(identification via $q$). Let $M_0=\bd A$. 
[Note the asymmetry of the pairs $(M,Y)$ and $(M_0,Y)$: the pair $(M,Y)$ is ($k-1$)-connected but there is no retraction
of $M$ onto $Y$, while $M_0$ retracts onto $Y$ but the pair $(M_0,Y)$ is only weakly ($k-1$)-connected 
(see \lemref{Lrefretr})]. 
By \proref{Pdecomp}, there is a manifold decomposition
$$
M_0 = N_0 \cup T_0 \quad , \quad N_0\cap T_0 = \bd N_0 = \bd T_0
$$
where $(N_0,q_{|N_0})$ is a stable $\nu$-thickening of $Y$ and $T_0$ is \has{k} with $w_k(T_0)=w_k(\bd A)$.
Using a collar neighborhood of $M_0$ in $A$, the inclusion $N_0\hookrightarrow M_0$ may be extended
to an embedding $\alpha\: N_0\times [0,2]\to A$ with $\alpha^{-1}(\bd A)=N_0\times \{0\}$. 
Let $A_0=A\psetminus (\rm{int} N_0 \times [0,2))$. Note that $A_0$ is homeomorphic to $A$ (diffeomorphic modulo
rounding corners). 

By \proref{PWallThick}, there is a degree-one diffeomorphism $h\:N\to N_0$. Let $h_\bd \:\bd N \to \bd N_0$ 
be restriction of $h$ to $\bd N$. Form the space
$$
W = T\times [0,1] \cup_{\beta} A_0 \, ,
$$
where $\beta\:\bd T\times [0,1] \to \bd T_0\times [0,1]$ is the diffeomorphism given by 
$\beta(x,t)=(\alpha\pcirc h_\bd (x),t)$. 
After rounding the corners (see appendix of \cite{BSD}), $W$ is a smooth cobordism between the manifolds
$$
\calm = T \cup_{h_\bd} T_0 \ \hbox{ and } \  \calm_1 \approx T \cup_{h_\bd} N_0  
$$
(since $\bd N_0\times [1,2] \cup N_0\times\{2\}\approx N_0$).
\thref{TcobL2} will follow from the three assertions
\begin{itemize}
 \item[(A)] $\calm$ is \as{k}.
 \item[(B)] $\calm_1$ is diffeomorphic to $M$.
 \item[(C)] $W$ is a \has{k} cobordism.
\end{itemize}

We now start proving the above assertions. There is an embedding $Y\hookrightarrow \bd N$.
We leave to the reader to prove, using Van Kampen's theorem,
that the various fundamental groups $\pi_1(W)$, $\pi_1(\calm)$, are thus all identified with $\pi_1(Y)$.

\sk{1}\noindent 
{\em Proof of (A).} \ 
We first prove that $\calm$ is \has{k} (i.e. $\calm\in\ecs{k}{n-k}$).
Let $R$ be a $\bbz\pi$-module. Given that $T_0\in\ecs{k}{n-k}$ and the exact sequence
$$
H^{j-1}(T_0;R) \fl{\delta}   H^j(\calm,T_0;R) \to H^j(\calm;R) \to H^j(T_0;R) \ ,
$$
it is enough to prove that $\delta$ is surjective when $k\leq j\leq n-k$. By excision, one has
$$
H^j(\calm,T_0;R) \approx H^j(T,\bd N;R) \approx  H^j(M,N;R) \, .
$$
As $M\in\ecs{k}{n-k}$ and $\dim Y = k-1$, one deduces from the exact sequence
$$
H^{j-1}(N;R) \to H^j(M,N;R) \to H^j(M;R)
$$
that $H^j(\calm,T_0;R)\approx H^{j-1}(N;R)=0$ for $j>k$. For $j=k$, one uses the following commutative diagram
\beq{TcobL2-L1-dia}
\dia{ 
H^{k-1}(T_0;R) \ar[d]^(0.43)\delta \ar[r]^(0.47){a} & H^{k-1}(\bd T;R) \ar[d]^(0.43){\delta_1} \ar@{<-}[r] & 
H^{k-1}(N;R) \ar[d]^(0.43){\delta_2}
\\ 
H^k(\calm,T_0;R) \ar[r]^(0.47)\approx & H^k(T,\bd T;R) \ar@{<-}[r]^(0.54)\approx & H^k(M,N;R)
}
\eeq
where the horizontal arrows are induced by inclusions, via the identifications \\ 
$\bd T=\bd N \approx \bd N_0 = \bd T_0$. 
The homomorphism $\delta_2$ is onto since $H^k(M;R)=0$. This implies that $\delta_1$ is onto. Now, the homomorphism $a$ may be identified with
the homomorphism $a_0$ in the commutative diagram
$$
\dia{ 
 H^{k-1}(M_0;R)  \ar[rr]^{(\sigma_0)^*} \ar[d] &&  H^{k-1}(N_0;R) \ar[d]^(0.45)b  
\\ 
H^{k-1}(T_0;R) \ar[r]^(0.47){a_0}  &  H^{k-1}(\bd T_0;R) \ar[r]^\approx & H^{k-1}(\bd N_0;R)
} .
$$
The homomorphism $(\sigma_0)^*$ is onto, since the inclusion $\sigma_0\:N_0\to M_0$ admits a homotopy retraction 
(see \lemref{Lrefretr}). The homormorphism $b$ induced by the inclusion is also onto for, by \pe duality
$$
H^k(N_0,\bd N_0;R) \fl{\approx} H_{n-k}(N_0;R) = 0  \, ,
$$
since $n-k>k-1$. Therefore, $a$ is onto in Diagram \eqref{TcobL2-L1-dia}, implying that $\delta$ is onto, 
which proves that $\calm$ is \has{k}.

We now compute the antisimple obstruction $w_k(\calm)$. 
By excision, one has isomorphisms
\beq{DexcisMM0}
\dia{
H_*(\tilde \calm,\widetilde{\bd N}) \ar@{<-}[r]^(.39)\approx & H_*(\tilde T,\widetilde{\bd N}) \oplus H_*(\tilde T_0,\widetilde{\bd N}) \ar[r]^\approx & H_*(\tilde M,\tilde N) \oplus H_*(\tilde M_0,\tilde N_0)
}
\eeq
By construction, the pair $(M,N)$ is ($k-1$)-connected and so is the pair $(M_0,N_0)$ by \lemref{Lrefretr}.
Therefore, \eqref{DexcisMM0} implies that $(\calm,\bd N)$ is ($k-1$)-connected. The first isomorphism 
of~\eqref{DexcisMM0} for $*=k$ then implies that $\pi_k(\calm,N) \approx \pi_k(M,N)\oplus \pi_k(M_0,N_0)$, which,
by Definition~\eqref{defwk}, implies that 
$$
w_k(\calm)=w_k(M)+w_k(M_0) = \calp - \calp =0 \, .
$$
As $n\geq 6$, Assertion~(A) follows from \proref{Pcasas}.

\sk{1}\noindent 
{\em Proof of (B).} \
Let $c\:\bd N\times [0,1)\to N$ and $c_0\:\bd N_0\times [0,1)\to N_0$ be embeddings of open collar neighborhoods
of $\bd N$ and $\bd N_0$ in $N$ and $N_0$. Let $T^+=T\cup_{\bd T=\bd N} \bd N\times [0,1)$. As smooth manifolds,
$M$ and $\calm_1$ may be presented as follows
$$
M = T^+ \dcup N\big/ \{(x,t)\sim c(x,t) \hbox{ for }  (x,t)\in \bd N\times (0,1)\}
$$
and
$$
\calm_1 = T^+ \dcup N_0\big/ \{(x,t)\sim c_0(h_\bd(x),t) \hbox{ for }  (x,t)\in \bd N_0\times (0,1)\}  \, .
$$
Thus, a diffeomorphism $s\: M\fl{\approx} \calm_1$ may be defined as
$$
s(x) = 
\left\{
\begin{array}{ll}
x & \hbox{if $x\in T^+$} \\
h(x) & \hbox{if $x\in {\rm int\,}N$.}
\end{array} 
\right.
$$


\sk{1}\noindent 
{\em Proof of (C).} \
We consider the morphism of Mayer-Vietoris sequences for the inclusion of $4$-ads
$$
(\tilde \calm,\tilde T,\tilde T_0,\widetilde{\bd N}) \hookrightarrow
(\tilde W,\tilde T\times [0,1],\tilde A_0,\widetilde{\bd N}\times [0,1]) \, .
$$
For $j\leq k-1$, all the homomorphisms, except possibly $H_j(\calm)\to H_j(W)$, are isomorphisms .
But then the latter is an isomorphism by the five lemma. The same happens for the inclusion
$$
(\tilde \calm_1,\tilde T,N,\widetilde{\bd N}) \hookrightarrow
(\tilde W,\tilde T\times [0,1],\tilde A_0,\widetilde{\bd N}\times [0,1]) \, .
$$
Therefore, both pairs $(W,\calm)$ and $(W,\calm_1)$ are weakly ($k-1$)-connected. 
The proof that $W\in\ecs{k}{n-k}$ is the same as that of $\calm\in\ecs{k}{n-k}$ (see the proof of (A)): just replace 
$\calm$ by $W$ and $T_0$ by $A_0$. 
\end{proof}

\subsection{Examples}\label{Sexpl}

Let  $Y$ be a connected finite cell complex of dimension $k-1\geq 2$ and let $\eta$ be a stable vector bundle over $Y$.
Let $(\pi,\omega)=(\pi_1(Y),\omega_\eta)$. 
Set $H^+=H^{2j}=H^{2j}(\bbz_2,\tilde K_0(\pi,\omega))$ and 
$H^-=H^{2j+1}=H^{2j+1}(\bbz_2,\tilde K_0(\pi,\omega))$.

Let $\calp$ be a  $\pm$-symmetric element of $\tilde K_0(\bbz\pi,\omega)$,
i.e. $\calp$ satisfies $\calp=\pm\calp^*$. Suppose that 
$\calp$ represents zero in $H^\pm$. By \thref{Trea}, 
there exist $(Y,\eta)$-referred closed connected manifolds $(M,g)$ of dimension $r\geq 2k$
such that $M$ is \has{k} with $g_*(w_k(M))=\calp$. 
By \thref{TcobL2}, when $r\geq 6$,
such manifolds are cohomology $k$-antisimply cobordant to an \as{k} closed manifolds.

A source of such examples is given by the odd torsion $T_{\rm odd}$ of $\tilde K_0(\bbz\pi)$.
Indeed, $T_{\rm odd}$ splits under the involution $*$ into a direct sum of
eigenspaces $T_{\rm odd}^{\pm}$ for the eigenvalues $\pm 1$. This produces $\pm$-symmetric element of $\tilde K_0(\bbz\pi,\omega)$ which represent zero in $H^\pm$,
since the latter is a $2$-group. The group  $T_{\rm odd}$ is not zero for many finite groups,
such as $p$-groups for $p$ an odd prime, or the symmetric groups (see \cite{OliverProj} for a survey and references).

More essential examples may arise when $\pi$ is a finite $2$-group. For example, when $\pi=(\bbz_2)^3$, then
$\tilde K_0(\bbz\pi)\approx\bbz_2$ \cite[Theorem~12.9]{WallNorms}. Thus, $H^\pm\approx\bbz_2$. On the other hand,
the homomorphism
\beq{deltan}
\delta^{(n)} = \delta_R^{(n)} : H^{n+1} \to L_n^h(\pi,\omega)
\eeq
of \eqref{ranicki} vanishes for $n\equiv 2\, ({\rm mod} 4)$ \cite[Theorem~C]{HamMil} (warning: 
the conventions for $H^\pm$ in \cite{HamMil} are the opposite of ours, since the authors use there the involution
$\calp\mapsto -\calp^*$). By \thref{Trea} and \proref{PcobL2}, for $n=4j+2\geq 2k$, there exists
an $(Y,\eta)$-referred closed connected $n$-manifolds $(M,g)$ which is \has{k} but not
cohomology $k$-antisimply cobordant to an \as{k} manifold. As \cite[Theorem~C]{HamMil} applies to a large class of
finite $2$-groups, one might be able to get other examples.

When $\pi$ is the generalized quaternion group $Q2^s$ ($s\geq 3$), one sees in \cite[\S~6 and~(1.5)]{HamMil}
that $H^\pm\approx\bbz_2$. On the other hand, 
the homomorphism $\delta^{(n)}$ of~\eqref{deltan} vanishes for 
$\not\equiv 1\, ({\rm mod} 4)$ \cite[Theorem~D]{HamMil}. As above, one deduces from \thref{Trea} and \proref{PcobL2} that, for $2k\leq n\neq 4j+1$, there an $(Y,\eta)$-referred closed connected $n$-manifolds $(M,g)$ which is \has{k} but  not cohomology $k$-antisimply cobordant to an \as{k} manifold.

Finally, \proref{Pcasas}, \thref{TcobL2} together with \proref{PproductFor} implies the following

\begin{Proposition}\label{CCproductFor}
Let $M$ be a closed connected $n$-dimensional manifold ($n\geq 6$) which is \has{k} for $k\geq 3$. 
Let $A$ be a closed connected manifold of dimension $a$ with $a\leq n-2k$. Then the manifold $M\times A$ is \has{(k+a)} and
\begin{itemize}
\item[(1)] if $\chi(A)=0$, then $M\times A$ is \as{(k+a)}
\item[(2)] if $\chi(A)$ is even, then $M\times A$ is cohomology $(k+a)$-antisimply cobordant to an \as{(k+a)} closed manifold. \mancqfd
\end{itemize}
\end{Proposition}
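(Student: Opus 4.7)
The plan is to invoke, in order, the three results cited in the statement: the product formula \proref{PproductFor}, the criterion \proref{Pcasas}, and the cobordism criterion \thref{TcobL2}. First, I would verify that $M\times A$ is \has{(k+a)}. Since $M$ is \ccs{k}{n-k} and $a\leq n-2k = (n-k)-k$, \proref{PproductFor} (with $\ell=n-k$) applies and gives that $M\times A$ is \ccs{k+a}{n-k}. Observing that $\dim(M\times A)=n+a$ and $(n+a)-(k+a)=n-k$, this is exactly the \has{(k+a)} condition. The key identity we extract is
$$
w_{k+a}(M\times A) \;=\; \chi(A)\, i_*(w_k(M)) \quad\in\quad \tilde K_0(\bbz\pi_1(M\times A)).
$$

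For Part (1), if $\chi(A)=0$ the identity above gives $w_{k+a}(M\times A)=0$ immediately. The dimensional hypotheses of \proref{Pcasas} hold: $\dim(M\times A)=n+a\geq 2(k+a)$ (because $a\leq n-2k$) and $2(k+a)\geq 6$ (because $k\geq 3$). Therefore \proref{Pcasas} applies and $M\times A$ is \as{(k+a)}.

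For Part (2), by \proref{Pdua} applied to the closed \has{(k+a)} manifold $M\times A$, the obstruction $w_{k+a}(M\times A)$ is $(n+a)$-self-dual, hence defines a Tate class
$$
[w_{k+a}(M\times A)] \;\in\; H^{n+a+1}\!\left(\bbz_2;\tilde K_0(\bbz\pi_1(M\times A),\omega_{M\times A})\right).
$$
Since this Tate cohomology group is an abelian group of exponent two, multiplication by any even integer kills every class. As $\chi(A)$ is even, the identity above gives $[w_{k+a}(M\times A)]=\chi(A)\,[i_*(w_k(M))]=0$. Then \thref{TcobL2} (whose dimensional hypothesis $n+a\geq 2(k+a)\geq 6$ is verified exactly as in Part (1)) delivers a cohomology $(k+a)$-antisimple cobordism from $M\times A$ to an \as{(k+a)} closed manifold.

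There is no substantive obstacle: once the product formula is in hand, the whole argument is a bookkeeping of numerical conditions plus the single observation that the Tate cohomology group $H^*(\bbz_2;-)$ has exponent two, which is what converts the parity hypothesis on $\chi(A)$ into the vanishing of the Tate class.
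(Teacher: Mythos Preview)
Your argument is correct and is precisely the one the paper intends (the paper offers no details beyond citing \proref{PproductFor}, \proref{Pcasas}, and \thref{TcobL2}). One small point of care in Part~(2): writing $[i_*(w_k(M))]$ as a Tate class in $H^{n+a+1}$ presupposes that $i_*(w_k(M))$ is $(n+a)$-self-dual---which indeed holds, since $w_k(M)$ is $n$-self-dual by \proref{Pdua}, the map $i_*$ respects the involution, and when $a$ is odd the closed manifold $A$ has $\chi(A)=0$ so one is already in Case~(1).
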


\section{Referred antisimple manifolds}\label{SrefASManif}

Through this section $Y$ will be a fixed connected finite cell complex of dimension $k-1\geq 2$ equipped
with a stable vector bundle $\eta$ over it. We set $(\pi,\omega)=(\pi_1(Y),\omega_\eta)$.
We denote by $\wh(\pi,\omega)$ the Whitehead group $\wh(\pi)$ endowed with the involution $\alpha\mapsto\alpha^*$
using the orientation character $\omega$. 
When we consider an $(Y,\eta$)-referred manifold
$(X,g)$, its fundamental group and orientation character $(\pi_1(X),\omega_X)$ is thus is identified via $g_*$ with 
$(\pi,\omega)$. This gives an identification $\wh(\pi_1(X),\omega_X)\approx\wh(\pi,\omega)$.
Equalities between elements of $\wh(\pi_1(X'),\omega_{X'})$
and $\wh(\pi_1(X''),\omega_{X''})$ for ($Y,\eta$)-manifolds $(X',g')$ and $(X'',g'')$
should be understood to hold in $\wh(\pi,\omega)$ via the identifications (like in Convention~\ref{Conv}).
The intervalle $[0,1]$ is denoted by $I$.

As done in the proof of \proref{Pdecomp}, if $(M,g)$ be an $(Y,\eta$)-referred closed manifold of dimension $r\geq 2k\geq 6$, a homotopy section $\gamma\:Y\to M$ of $g$ gives rise to 
a manifold decomposition $N\cup T\fl{\approx} M$ such that $(N,g_{|N})$ is a stable $\eta$-thickening of $Y$
When $M$ is \as{k}, we get a richer decomposition.

\begin{Proposition}\label{PdecAsm}
Let $(M,g)$ be an $(Y,\eta$)-referred \as{k} closed manifold of dimension $r\geq 2k\geq 6$. 
Then,

\begin{itemize}
 \item[(1)] a homotopy section $\gamma\:Y\to M$ of $g$ gives rise to 
a manifold decomposition
$$
M=N \cup V \cup N^* \ , \  N \cap N^* = \emptyset \ ,
\ V\cap N = \bd N \ , \ V\cap N^* = \bd N^*
$$
such that
\begin{itemize}
 \item[(i)] $(N,g_{|N})$ and $(N^*,g_{|N^*})$  are stable $\eta$-thickenings of $Y$ of dimension~$r$.
 \item[(ii)] the compact $r$-manifold $V$ is an h-cobordism between $\bd N$ and $\bd N^*$. 
\end{itemize}
\item[(2)] The Whitehead torsion $\tau(V,\bd N)\in Wh(\pi,\omega)$ is $r$-self-dual, i.e. satisfies \\
$\tau(V,\bd N)=(-1)^{r+1}\tau(V,\bd N^*)^*$.
\item[(3)]
The class $[\tau(V,\bd N)]\in H^{r+1}(\bbz_2;\wh(\pi_1(Y))$ does not depend on the section 
$\gamma$. 
\end{itemize}  
\end{Proposition}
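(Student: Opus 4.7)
The plan is to construct the decomposition by combining an antisimple handle decomposition of $M$ with the uniqueness of stable $\eta$-thickenings (\proref{PWallThick}), then read off the duality of the Whitehead torsion from the Poincar\'e duality of $V$, and finally show independence of $\gamma$ by an isotopy/ambient extension argument.

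For (1), I would fix any antisimple handle decomposition $D^r = H_0 \subset \cdots \subset H_r = M$ of $M$, so that $H_{k-1} = H_{r-k}$. Set $N_0 := H_{k-1}$ and $N_0^* := M \setminus \mathrm{int}\, N_0$. The pair $(M, N_0)$ is $(r-k)$-connected and, by the dual handle decomposition, $(M, N_0^*)$ is also highly connected; the restrictions $g|_{N_0}$ and $g|_{N_0^*}$ are therefore $Y$-reference maps (after possibly applying \lemref{Lrefretr}), and both $(N_0, g|_{N_0})$ and $(N_0^*, g|_{N_0^*})$ are stable $\eta$-thickenings of $Y$ of dimension $r$ (using that the composite of the inclusion of the respective spine with $g$ is a homotopy equivalence). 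Since $\dim Y = k-1 \leq r-k$, the section $\gamma$ can be homotoped into $N_0$ and, by general position, realized as a smooth embedding $\gamma(Y) \subset \mathrm{int}\, N_0$; the argument of \proref{Pdecomp} then makes its regular neighborhood $N \subset N_0$ a stable $\eta$-thickening of $Y$. In parallel, I would choose a homotopy section $\gamma^*\: Y \to N_0^*$ of $g|_{N_0^*}$ via \lemref{Lrefretr}, embed it smoothly in $\mathrm{int}\, N_0^*$, and let $N^*$ be its regular neighborhood, again a stable $\eta$-thickening of $Y$. Setting $V := M \setminus (\mathrm{int}\, N \cup \mathrm{int}\, N^*)$ yields a decomposition $V = (N_0 \setminus \mathrm{int}\, N) \cup (N_0^* \setminus \mathrm{int}\, N^*)$, glued along $\bd N_0 = \bd N_0^*$. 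By the uniqueness part of \proref{PWallThick}, each of the two pieces is an $h$-cobordism, so $V$ is an $h$-cobordism between $\bd N$ and $\bd N^*$, as required.

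For (2), the $r$-self-duality $\tau(V,\bd N) = (-1)^{r+1} \tau(V,\bd N^*)^*$ is Milnor's duality theorem for the Whitehead torsion of an $h$-cobordism, the involution on $\wh(\pi,\omega)$ being induced by $\omega = \omega_M$. Concretely, one dualizes the cellular chain complex of $(\tilde V, \widetilde{\bd N})$ using Poincar\'e--Lefschetz duality (the fundamental class of $V$), identifies the resulting complex with the one computing $\tau(V, \bd N^*)$, and reads off the sign $(-1)^{r+1}$ from the dimension shift.

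For (3), given two homotopy sections $\gamma_1,\gamma_2$ with decompositions $M = N_j \cup V_j \cup N_j^*$, the main idea is to produce, after an ambient isotopy, matching $N_1 = N_2$ (this is possible by isotoping both $\gamma_j(Y)$ into the same $N_0$ and applying uniqueness of regular neighborhoods in the stable range $r > 2(k-1)$), and then compare the two dual pieces. The difference $\tau(V_1,\bd N_1) - \tau(V_2,\bd N_2)$ then reduces to the torsion of an $s$-cobordism $W$ between $\bd N_1^* = \bd N_2^*$, and one shows by Milnor duality that this difference has the form $\alpha + (-1)^{r+1} \alpha^*$ (where $\alpha \in \wh(\pi,\omega)$ comes from the relative torsion of $W$ viewed from either end). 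Consequently the class in $H^{r+1}(\bbz_2;\wh(\pi,\omega))$ is unchanged. The step I expect to be the most delicate is arranging this reduction equivariantly with respect to the $Y$-reference structure, so that the coboundary term on the dual side is genuinely of the norm-image form; once this is set up, the sign bookkeeping is the same as in \proref{Pdua}.
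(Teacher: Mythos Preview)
Your approach to (1) is essentially the paper's: fit the regular neighborhoods $N$ and $N^*$ into the two halves $\calh_{k-1}$ and $\calh_{k-1}^*$ of an antisimple handle decomposition, and then show each complement is an $h$-cobordism. One remark: the sentence ``by the uniqueness part of \proref{PWallThick}, each of the two pieces is an $h$-cobordism'' is too quick. Uniqueness gives you a diffeomorphism $N\approx N_0$, not directly that $N_0\setminus{\rm int}\,N$ is an $h$-cobordism. What is needed is that the inclusion $N\hookrightarrow N_0$ is a homotopy equivalence (so that excision yields $H_*(\widetilde{N_0\setminus{\rm int}\,N},\widetilde{\partial N})=0$); the paper establishes this by observing that $\gamma\:Y\to\calh_{k-1}$ is itself a homotopy equivalence, since both have homotopy dimension $\leq k-1$ and $(\calh_{k-1},N)$ is weakly $(k-1)$-connected.

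There is a genuine gap in your argument for (2). Milnor's duality for an $h$-cobordism $(V;\partial N,\partial N^*)$ gives only a relation between $\tau(V,\partial N)$ and $\tau(V,\partial N^*)^*$; it holds for \emph{every} $h$-cobordism and by itself imposes no constraint on $\tau(V,\partial N)$. The content of ``$r$-self-dual'' is that $\tau(V,\partial N)=(-1)^{r+1}\tau(V,\partial N)^*$, which via Milnor's duality is equivalent to $\tau(V,\partial N)=\tau(V,\partial N^*)$. This last equality is \emph{not} automatic: it encodes the symmetry of the construction, namely that $N$ and $N^*$ are regular neighborhoods of two \emph{isotopic} copies of $Y$ inside $M$. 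The paper exploits this explicitly: writing $N\approx N_-\times I$ and $N^*\approx N_-^*\times I$, one pushes $\gamma$ and $\gamma^*$ to embeddings $\gamma_0,\gamma_0^*$ into $\partial N$ and $\partial N^*$; since these are isotopic in $M$ (and by general position in $V$), one obtains $V\approx(\bar N_-\times I)\cup Z$ with $(Z,N_-\times\{1\},N_-^*\times\{1\})$ an $h$-cobordism, and computes both torsions as the torsion of homotopic maps $\bar\gamma_1,\bar\gamma_1^*\:Y\to Z$. Without this ``isotopy inside $V$'' step your argument does not go through.

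For (3) your idea is in the right direction but differs from the paper's execution. Rather than trying to arrange $N_1=N_2$ by an ambient isotopy, the paper \emph{nests} the two decompositions: after a self-homotopy-equivalence of $Y$ one may take $N(\delta)\subset{\rm int}\,N(\gamma)$ and $N^*(\delta)\subset{\rm int}\,N^*(\gamma)$, obtaining $V(\delta)=S\cup V(\gamma)\cup S^*$ for $h$-cobordisms $S,S^*$. Additivity of torsion together with Milnor's duality applied to $S$ then gives $\tau(V(\delta),\partial N(\delta))-\tau(V(\gamma),\partial N(\gamma))=\alpha+(-1)^{r+1}\alpha^*$ with $\alpha=\tau(S,\partial N(\delta))$, exactly the norm form you anticipated. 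The nesting avoids the ``equivariance'' issue you flag, since no ambient isotopy is required.
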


Thanks to (3) above, the class of $[\tau(V,\bd N)] \in H^{r+1}(\bbz_2;\wh(\pi_1(Y))$ will be denoted by 
$\tau(M,g)$.

\begin{proof}
As in the proof of \proref{Pdecomp}, 
one may suppose that $\gamma\:Y\to M$ is an embedding. Let 
$N$ be a smooth regular neighborhood of $\gamma(Y)$. We thus get the manifold decomposition $M= N\cup T$
of \proref{Pdecomp}.
By general position, $\gamma$ is isotopic to an embedding
$\gamma^*\:Y\to T$. Let $N^*$ be a smooth regular neighborhood of $\gamma^*(Y)$ in ${\rm int}(T)$.
Let $V=M \psetminus {\rm int}(N\cup N^*)$. 

To prove that the cobordism $(V,\bd N,\bd N^*)$ is an h-cobordism, let us 
consider a handle decomposition $\calh$ for $M$
$$
\calh : \ D^n = \calh_0 \subset \calh_1 \subset \cdots \subset \calh_n = M \ ,
$$
where $\calh_j$ is the union of handles of index $\leq j$ in $\calh$. We suppose that $\calh$ is \as{k},
i.e. $\calh_{k-1}=\calh_{n-k+1}$ and that $\calh$ has only one $r$-dimensional handle. 
Consider the dual handle decomposition $\calh^*$ of $\calh$ (see e.g. \cite[p.~394]{MilnorWT}).

By general position, one may assume that $\gamma(Y)\subset N\subset  {\rm int\,}\calh_{k-1}$. 
Let $X=\calh_{k-1}\psetminus {\rm int\,}N$.
The pairs $(M,N)$ and $(M,\calh_{k-1})$ are both weakly ($k-1$)-connected. Therefore, the pair $(\calh_{k-1},N)$
is weakly ($k-1$)-connected. Since the homotopy dimensions of $Y$ and of $\calh_{k-1}$ are both $\leq k-1$,
one deduces that $\gamma\:Y\to \calh_{k-1}$ is a homotopy equivalence. It follows easily that $(X,\bd N,\bd\calh_{k-1})$
is an h-cobordism (see e.g. \cite[proof of Proposition~4.6]{HauJah}).

Similarely, one may suppose that $N^*\subset {\rm int\,}\calh_{k-1}^*$ and get an h-cobordism \\
$(X^*,\bd N^*,\bd\calh_{k-1}^*)$. One has that $V\cap V^*=\bd\calh_{k-1}=\bd\calh_{k-1}^*$, 
so the cobordism $V= X\cup X^*$ is an h-cobordism.

To prove (2), recall from \remref{RBdThick} that $N\approx N_-\times I$
and thus $\bd N \approx \bar N_-\cup N_-\times\{1\}$, where $\bar N_-=N_-\times\{0\} \cup \bd N_-\times I$.
Analogously,  $N^*\approx N_-^*\times I$ 
and $\bd N^* \approx \bar N_-^*\cup N_-^*\times\{1\}$, where $\bar N_-^*=N_-^*\times\{0\} \cup \bd N_-^*\times I$.
For $t=0,1$, consider the embedding $\gamma_t\:Y\to N_-\times\{t\}$ homotopic in $N$ to $\gamma$. 
We do the same for $\gamma_t^*\:Y\to N_-^*\times\{t\}$ homotopic in $N^*$ to $\gamma^*$. 

The two embeddings $\gamma_0$ and  $\gamma_0^*$ are isotopic in $M$. By general position, they are isotopic in $V$. Therefore, $V\approx (\bar N_-\times I)\cup Z$, where $(Z,N_-\times 1,N_-^*\times 1)$ is an h-cobordism.
Set $\bar\gamma_1\:Y\to Z$ to be $\gamma_1$ post-composed with the inclusion $N_-\times\{1\}\hookrightarrow Z$
and $\bar\gamma_1^*\:Y\to Z$ to be $\gamma_1^*$ post-composed with the inclusion $N_-^*\times\{1\}\hookrightarrow Z$
One has
$$
\tau(V,\bd N) = \tau(Z,N_-\times\{1\}) = \tau(\bar\gamma_1) 
$$
and 
$$
(-1)^{r+1}\tau(V,\bd N) = \tau(V,\bd N^*) = \tau(Z,N_-^*\times\{1\}) = \tau(\bar\gamma_1^*) \, . 
$$
Since $\gamma_1$ and $\gamma_1^*$ are homotopic in $\bd Z$, this proves (2). 

To prove (3), let $\delta\:Y\to M$ be another homotopy section of $g$, giving rise to a manifold decomposition
$M\approx N(\delta) \cup V(\delta)\cup N^*(\delta)$ which we have to compare with the decomposition
$M\approx N(\gamma) \cup V(\gamma)\cup N^*(\gamma)$. As above, by general position and given the connectivity of 
$(M,N(\gamma))$ and $(M,N(\delta))$, there is a self-homotopy equivalence $\phi$ of $Y$ such that 
$\gamma=\delta\pcirc\phi$. Therefore, we shall have $N(\delta)\subset {\rm int} N(\gamma)$
and $N^*(\delta)\subset {\rm int} N^*(\gamma)$, 
whence h-cobordisms $(S,\bd N(\delta),\bd N(\gamma))$ and $(S^*,\bd N^*(\delta),\bd N^*(\gamma))$. One has
$V(\delta) = S \cup V(\gamma) \cup S^*$. Therefore
\begin{eqnarray*}
 \tau(V(\delta),\bd N(\delta)) &=& \tau(V(\gamma),\bd N(\gamma)) + \tau(S,\bd N(\delta)) + \tau (S^*,\bd N^*(\gamma))
 \\ &=&
 \tau(V(\gamma),\bd N(\gamma)) + \tau(S,\bd N(\delta)) + (-1)^{r+1}\tau(S,\bd N(\delta))^* \, ,
\end{eqnarray*}
which proves (3).
\end{proof}

\begin{Remark}\label{RTdoub}\rm
In case $\tau(M,g)=0$, then, by the s-cobordism theorem, $V\cup N^*$ is diffeomorphic to $N$ and $M$ is diffeomorphic to
the gluing of two copies of $N$ by a self-diffeomorphism of $\bd N$. We call such an antisimple manifold a 
\dfn{twisted double}.\mancqfd
\end{Remark}

Two $(Y,\eta)$-referred manifold $(M_i,g_i)$ ($i=0,1$) are called \dfn{h-cobordant} if there is an h-cobordism
$(W,M_0,M_1)$ admitting a $k$-connected $\eta$-normal map $G\:W\to Y$ extending $g_0$ and $g_1$. 

\begin{Proposition} 
Let $(M,g)$ be an $(Y,\eta$)-referred \as{k} closed manifold of dimension $r\geq 2k\geq 6$. 
Then,
\begin{itemize}
 \item[(1)] The class $\tau(M,g)\in H^{r+1}(\bbz_2;Wh(\pi_1(Y))$ is an invariant of the h-cobordism class of $(M,g)$.
\item[(2)] $\tau(M,g)=0$ if and only if $(M,g)$ is h-cobordant to a twisted double.
\end{itemize} 
\end{Proposition}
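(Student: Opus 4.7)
\emph{Invariance under h-cobordism (Part~1).} The strategy is to extend the antisimple decomposition across $W$. Given an h-cobordism $(W, M_0, M_1)$ with $k$-connected $\eta$-normal $G\:W \to Y$, first pick a homotopy section $\gamma_0\:Y\to M_0$ of $g_0$ and, using that $M_i \hookrightarrow W$ are homotopy equivalences and $G$ is $k$-connected, transport $\gamma_0$ to a section $\gamma_1\:Y\to M_1$ of $g_1$ joined to $\gamma_0$ by a map $\Gamma\:Y\times I\to W$. Because $\dim(Y\times I)=k$ and $\dim W=r+1\geq 2k+1$, general position turns $\Gamma$ into an embedding transverse to $\partial W$ with $\Gamma^{-1}(\partial W)=Y\times\{0,1\}$. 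A smooth regular neighborhood $\calN$ of $\Gamma(Y\times I)$ then meets $M_i$ in a regular neighborhood $N_i$ of $\gamma_i(Y)$ and is itself a stable $\eta$-thickening of $Y$ of dimension $r+1$, hence diffeomorphic to $N_0\times I$ by \proref{PWallThick}. A second such construction, disjoint from the first by general position (since $2k<r+1$), yields $\calN^*\cong N_0^*\times I$. Set $\calV=W\setminus\ind(\calN\cup\calN^*)$; then $\calV\cap M_i=V_i$, and a Mayer--Vietoris/h-cobordism argument shows that $\calV$ is itself an h-cobordism from $V_0$ to $V_1$, with lateral faces $\bd N_0\times I$ and $\bd N_0^*\times I$ product h-cobordisms.

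The composition formula for Whitehead torsion applied to the chain $\bd N_i\hookrightarrow V_i\hookrightarrow\calV$, combined with the triviality of the lateral product faces, yields $\tau(\calV,\bd N_0)=\tau(\calV,\bd N_1)$, and hence
$$
\tau(V_1,\bd N_1)-\tau(V_0,\bd N_0)=\tau(\calV,V_0)-\tau(\calV,V_1).
$$
The h-cobordism duality $\tau(\calV,V_1)=(-1)^r\tau(\calV,V_0)^*$ then presents this difference as $\beta+(-1)^{r+1}\beta^*$ with $\beta=\tau(\calV,V_0)$, proving $[\tau(V_0,\bd N_0)]=[\tau(V_1,\bd N_1)]$ in $H^{r+1}(\bbz_2;\wh(\pi,\omega))$.

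\emph{Characterization (Part~2).} If $(M,g)$ is h-cobordant to a twisted double $N\cup_\phi N$, then using its decomposition $N\cup(\bd N\times I)\cup N$ with trivial middle h-cobordism, $\tau(V,\bd N)=0$ and hence $\tau(M,g)=0$ by Part~(1). Conversely, suppose $\tau(M,g)=0$, so $\tau(V,\bd N)=\alpha+(-1)^{r+1}\alpha^*$ for some $\alpha\in\wh(\pi,\omega)$. By the realization theorem for h-cobordisms, produce h-cobordisms $U$ of $\bd N$ with $\tau(U,\bd N)=-\alpha$ and $U^*$ of $\bd N^*$ with $\tau(U^*,\bd N^*)=-(-1)^{r+1}\alpha^*$, each carrying a natural reference map to $Y$ through its boundary. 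Inserting $U$ and $U^*$ along bicollars of $\bd N$ and $\bd N^*$ in $M$ via the standard trace construction gives an h-cobordism $W$ from $M$ to $M'=N\cup U\cup V\cup U^*\cup N^*$ carrying a $k$-connected $\eta$-normal reference map extending $g$, $u$, $u^*$. In $M'$, the composition formula yields
$$
\tau(U\cup V\cup U^*,\bd N)=-\alpha+(\alpha+(-1)^{r+1}\alpha^*)-(-1)^{r+1}\alpha^*=0,
$$
so by the s-cobordism theorem the new middle is $\bd N\times I$, and $M'$ is a gluing of two copies of $N$, i.e., a twisted double.

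\emph{Main obstacles.} The two technically delicate points are: (i) justifying that $\calV$ is an h-cobordism with the correct product structure on its lateral faces (requiring corner smoothing and a careful Mayer--Vietoris argument for the decomposition $W=\calN\cup\calV\cup\calN^*$), which is what allows the torsion cancellation in Part~(1); and (ii) carrying out the insertion/trace construction in Part~(2) so that it genuinely produces an h-cobordism extending the $(Y,\eta)$-reference structure coherently, rather than merely a cobordism realizing the algebraic manipulation of Whitehead torsions.
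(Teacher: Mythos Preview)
Your argument for Part~(1) is essentially the paper's: both build a product-like decomposition $W\approx(N\times I)\cup Z\cup(N^*\times I)$ of the h-cobordism (your $\calV$ is the paper's $Z$), then use the additivity of torsion together with the duality $\tau(Z,V_1)=(-1)^r\tau(Z,V_0)^*$ to see that the two middle torsions differ by a term of the form $\beta+(-1)^{r+1}\beta^*$. The forward implication in Part~(2) is also the same.

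For the converse in Part~(2), however, your ``insertion along bicollars'' has a genuine gap. The realization theorem gives an h-cobordism $(U,\bd N,\Sigma)$ with $\tau(U,\bd N)=-\alpha$, but there is no reason for the far end $\Sigma$ to be diffeomorphic to $\bd N$; hence the gluing $U\cup_{\,?}V$ in your proposed $M'=N\cup U\cup V\cup U^*\cup N^*$ is not defined, and the ``standard trace construction'' you invoke does not produce such an $M'$. You correctly flag this as an obstacle, but it is not a technicality: it is the point where the argument breaks.

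The paper avoids this entirely by realizing the torsion one level up. Instead of building h-cobordisms on $\bd N$ and $\bd N^*$ separately, it takes a single h-cobordism $(Z,M,M')$ on all of $M$ with $\tau(Z,M)=-\alpha$ (realization theorem applied to $M$), and then feeds $Z$ back into the formula from Part~(1). Since the decomposition of $Z$ into $(N\times I)\cup Z'\cup(N^*\times I)$ is already guaranteed by that argument, the middle piece $Z'$ is an h-cobordism from $V$ to some $V'$ with $\tau(Z',V)=-\alpha$, and the identity
\[
\tau(V,\bd N)+\tau(Z',V)=\tau(V',\bd N')+(-1)^r\tau(Z',V)^*
\]
immediately yields $\tau(V',\bd N')=0$, so $M'$ is a twisted double by the s-cobordism theorem. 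This bypasses any boundary-matching problem and makes Part~(2) a one-line consequence of Part~(1).
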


\begin{proof}
Let $(W,G)$ be an ($Y,\eta$)-referred h-cobordism between $(M_0,g_0)$ and $(M_1,g_1)$. 
Let $\Gamma\:Y\to W$ be a homotopy section of $G$. Composed with the projections on $M_i$, it gives rise to homotopy sections $\gamma_i\:Y\to M_i$ of $g_i$. One can also construct a homotopy $\hat\Gamma\:Y\times I \to W$  between 
$\gamma_0$ and $\gamma_1$ and a homotopy $\hat\Gamma^*\:Y\times I \to W$  between 
$\gamma_0^*$ and $\gamma_1^*$. By general position, this can be done by embeddings and we thus get  
a decomposition
$$
W \approx (N\times I) \cup Z \cup (N^*\times I) 
$$
where $(Z,V(\gamma_0),V(\gamma_1))$ is an h-cobordism.  One has

\noindent\hskip -6mm
\begin{minipage}{12.7cm}
{\small
\begin{eqnarray}
 \tau(V(\gamma_0),\bd N(\gamma_0)) + \tau(Z,V(\gamma_0))  &=& 
\tau(V(\gamma_1),\bd N(\gamma_1)) + \tau(Z,V(\gamma_1)) \nonumber  \\ &=&
\tau(V(\gamma_1),\bd N(\gamma_1)) + (-1)^{r}\tau(Z,V(\gamma_0))^* \label{eqhcob}
\end{eqnarray}
}
\end{minipage}\\
which proves (1).

If $(M,g)$ is h-cobordant to a twisted double, then $\tau(M,g)=0$ by (1) already proven.
Conversely, suppose that $\tau(M,g)=0$. This means that there is a manifold decomposition
$M\approx N \cup V \cup N^*$ with $(V,\bd N,\bd N^*)$ being an h-cobordism with 
$\tau(V,\bd N)= \alpha+ (-1)^{r+1}\alpha^*$ for some $\alpha\in\wh(\pi,\omega)$. Let $(Z,M,M')$ be an h-cobordism
with $\tau(Z,M)=-\alpha$. Using \eqref{eqhcob}, this proves that $\tau(V',N')=0$ and thus $M'$ is a twisted double.
\end{proof}

We now turn our attention to the construction of antisimple manifolds which are not h-cobordant to
twisted doubles. Analogously to Ranicki's exact sequence \eqref{ranicki}, the main ingredient is the {\it Rothenberg exact sequence} 
\cite[Proposition~4.1]{Sha}
\beq{Rothenberg}
L_{r+1}^s(\pi,\omega) \to L_{r+1}^h(\pi,\omega) \fl{T}  H^{r+1}(\bbz_2;Wh(\pi,\omega)) \fl{\delta_{\rm Rot}} L_{r}^s(\pi,\omega)
\eeq

\begin{Proposition}\label{PAsreali}
Let $a\in H^{r+1}(\bbz_2;Wh(\pi,\omega))$ such that $\delta_{\rm Rot}(a)=0$. Then, for $r\geq 2k\geq 6$ there exists an $(Y,\eta)$-referred \as{k} closed manifold $(M,g)$ with $\tau(M,g)=a$.
\end{Proposition}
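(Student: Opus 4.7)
The plan is to construct $M$ directly via the antisimple decomposition $M=N\cup V\cup N^*$ appearing in Proposition~\ref{PdecAsm}, choosing the middle $h$-cobordism $V$ to carry the desired Whitehead torsion class. First I would take $N$ to be the stable $\eta$-thickening of $Y$ of dimension $r$, which exists and is unique up to diffeomorphism by Proposition~\ref{PWallThick}, so $N^*:=N$ is also such a thickening. The boundary $\partial N$ is a closed $(r-1)$-dimensional $\has{k}$ manifold with $\pi_1(\partial N)\approx\pi$ (see \remref{RBdThick}), and its dimension $r-1\geq 2k-1\geq 5$ is in the range where the $s$-cobordism theorem applies.

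Next, using $\delta_{\rm Rot}(a)=0$ and exactness of the Rothenberg sequence \eqref{Rothenberg}, I would lift $a$ to an element $x\in L^h_{r+1}(\pi,\omega)$ with $T(x)=a$. The crucial realization step is to use Wall's geometric realization of $L^h$ classes (together with the identification of the $(Y,\eta)$-referred structure as in the proof of \thref{Trea}) to produce an $(Y,\eta)$-referred $r$-dimensional $h$-cobordism $V$ starting from $\partial N$, whose Whitehead torsion $\tau(V,\partial N)\in\wh(\pi,\omega)$ represents the class $a$ in $H^{r+1}(\bbz_2;\wh(\pi,\omega))$. The ``upper'' end $\partial^+V$ is then $h$-equivalent to $\partial N$, and Proposition~\ref{PWallThick} applied to the enlarged thickening $N\cup V$ (which is again a stable $\eta$-thickening of $Y$ of dimension $r$) gives a diffeomorphism $\Psi\:N\cup V\fl{\approx} N$ whose boundary restriction $\psi\:\partial^+V\fl{\approx}\partial N$ is the gluing needed to close up.

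Then I would form $M=N\cup V\cup_\psi N^*$ with the $(Y,\eta)$-reference map $g\:M\to Y$ defined by the thickening map $g_0$ on each copy of $N$ and extended across $V$ through a homotopy retraction onto $\partial N$; one checks $g$ is $k$-connected and $\eta$-normal since the reference maps on $N$, $N^*$ and $V$ are all so and the gluings respect the $\eta$-normal structure. Because $N\cup V\cong N$ via $\Psi$, the closed manifold $M$ is diffeomorphic to a twisted double of $N$, hence admits a handle decomposition consisting of the $\leq(k-1)$-handles of $N$ and their duals (of index $\geq r-k+1$) from $N^*$, which shows that $M$ is \as{k}. Finally, computing $\tau(M,g)$ via \proref{PdecAsm} using the natural homotopy section $Y\hookrightarrow N\hookrightarrow M$ produces exactly the decomposition $N\cup V\cup N^*$ we started from, so $\tau(M,g)=[\tau(V,\partial N)]=a$.

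The main obstacle is the realization step for $V$: one needs to construct an $h$-cobordism on $\partial N$ whose Whitehead torsion lies in the prescribed class and which is $\eta$-normal as an $(Y,\eta)$-referred cobordism. This is precisely where $\delta_{\rm Rot}(a)=0$ enters essentially, for exactness in \eqref{Rothenberg} is what guarantees a lift $x\in L^h_{r+1}(\pi,\omega)$ and hence a geometric normal-cobordism realization; without this hypothesis the corresponding surgery obstruction in $L^s_r$ would prevent the upper boundary from being made diffeomorphic to $\partial N$ in a way compatible with the reference map. A secondary technical point is to ensure that the homotopy section used to compute $\tau(M,g)$ via \proref{PdecAsm} recovers the decomposition $N\cup V\cup N^*$ up to the equivalence that allows \proref{PdecAsm}~(3) to identify the Whitehead torsion with $\tau(V,\partial N)$.
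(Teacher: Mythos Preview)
Your overall strategy coincides with the paper's: use exactness of the Rothenberg sequence to lift $a$ to some $\sigma\in L^h_{r+1}(\pi,\omega)$ with $T(\sigma)=a$, and then realize $\sigma$ geometrically via Wall's construction. The paper does exactly this: it represents $\sigma$ by a degree-one normal map $f\:(X;X_-,X_+)\to(N\times I;N_-,N_+)$ with $f_-$ a diffeomorphism and $f_+$ a homotopy equivalence of torsion $T(\sigma)$, and takes $M=\bd X$. Since $N\approx N^{(r-1)}\times I$ (\remref{RBdThick}), the piece $X_+$ is itself an $h$-cobordism between two copies of $N^{(r-1)}$, and $M=X_-\cup X_+$ is visibly \as{k}\ with $\tau(M,g)=a$.

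Your execution, however, has a genuine gap at the closure step. You assert that $N\cup V$ is again a stable $\eta$-thickening of $Y$ so that \proref{PWallThick} yields $N\cup V\approx N$. But the reference map $g'\:N\cup V\to Y$ factors through the retraction $N\cup V\to N$ (homotopy inverse to the inclusion), and $\tau(N\hookrightarrow N\cup V)=\tau(V,\bd N)$ by excision. Hence $\tau(g')=-\tau(V,\bd N)\neq 0$ whenever $a\neq 0$, so $g'$ is \emph{not} a simple homotopy equivalence and $N\cup V$ is not a thickening in the sense required by \proref{PWallThick}. Without this, you have no reason for $\bd^+V\approx\bd N$, so you cannot cap off with $N^*=N$; and even granting a diffeomorphism $N\cup V\approx N$, your own argument would then exhibit $M$ as a twisted double (\remref{RTdoub}), forcing $\tau(M,g)=0$ and contradicting your conclusion. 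The paper's trick of taking $M=\bd X$ from the Wall realization on $(N\times I;N_-,N_+)$ sidesteps precisely this closure problem: the boundary of $X$ is automatically a closed manifold, and the decomposition $\bd X=X_-\cup X_+$ already has the form needed for \proref{PdecAsm}.
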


\begin{proof}
The map $T$ in \eqref{Rothenberg} may be geometrically described as follows (see \cite[p.~313]{Sha}). Let $(N,g)$ be a stable $\eta$-thickening
of $Y$ of dimension $r$. Set $\bd (N\times I)\approx N_- \cup N_+$, where  $N_-=N\times\{0\}\cup \bd N\times [0,1/2]$
and $N_+=N\times\{1\}\cup \bd N\times [1/2,1]$.
An element $\sigma\in  L_{r+1}^h(\pi,\omega)$ may be represented as the surgery obstruction of a degree one normal map $f\: (X,X_-,X_+)\to (N\times I,N_-,N_+)$, where $X$ is a compact ($r+1$)-dimensional manifold with $\bd X = X_-\cup X_+$,
such that $f_-=f_{|X_-}\:X_-\to N_-$ is a diffeomorphism and $f_+=f_{|X_+}\:X_+\to N_+$ is a homotopy equivalence.
One has $T(\sigma)=\tau(f_+)$ (via the identification by $g_*$).

As mentioned in \remref{RBdThick}, $N=N^{(n)}$ is itself an s-cobordism between an $\eta$-thickening 
$N^{(n-1)}$ of $Y$ and itself. It follows that $X_+$ is an h-cobordism between two copies 
$N^{(n-1)}_\pm$ of $N^{(n-1)}$, with $\tau(N_+,N^{(n-1)})=-\tau(f)$. One deduces that $M=\bd X$
is an \as{k} closed manifold, ($Y,\eta)$-referred via $g$, with $\tau(M,g)=a$ (signs are irrelevant
since $H^{r+1}(\bbz_2;Wh(\pi,\omega))$ is a group of exponent $2$).
\end{proof}

\begin{Examples}\rm
(1)
$L_{\rm odd}^h(\pi)=0$ if $\pi$ is a finite group of odd order \cite{BakOdd} ($\omega$ is then trivial). Hence, by \proref{PAsreali}, any class $a\in H^{2j}(\bbz_2;Wh(\pi))$ may be realized as $\tau(M,g)$ for an $(Y,\eta)$-referred
closed \as{k} manifold of dimension $n=2j\geq 6$. 
It follows from \cite[\S~4]{Bass1} that  $H^{2j}(\bbz_2;Wh(\pi))\neq 0$ if $\pi$ is finite abelian, 
of exponent $\neq 1,2,3,4$ or $6$.

(2) Let $Y_1,\dots,Y_s$ be finite CW-complexes of dimension $\leq k-1$, such that $\pi_1(Y_i)=G_i$
is a finite group of odd order. Let $Y=Y_1\vee\cdots Y_s$ and $G=\pi_1(Y)\approx G_1 * \cdots * G_s$.
One has $L_{\rm odd}^h(G)=0$ by \cite[Theorem~5]{CappellMV}. Hence, as in (1) above, 
any class $a\in H^{2j}(\bbz_2;Wh(G))$ may be realized as $\tau(M,g)$ for an $(Y,\eta)$-referred
closed \as{k} manifold of dimension $n=2j\geq 6$.
Recall that $H^{2j}(\bbz_2;Wh(G))\approx\bigoplus_{i=1}^s H^{2j}(\bbz_2;Wh(G_i))$ by \cite{StallingsWhFP}.

(3)  When $Y$ is acyclic, an $(Y,\eta)$-referred closed \as{k} manifold is a homology sphere. As an example,
consider the \pe $3$-sphere minus a disk, which collapses to an acyclic $2$-dimensional finite complex $Y$ with
$$
\pi_1(Y) = \Delta = \langle a,b\, |\, a^5=b^3=(ab)^2 \rangle  ,
$$
the binary icosahedral group. Let $C_5= \langle t\, |\, t^5=1 \rangle $ 
be the cyclic group of order $5$ with a given generator $t$. There is a homomorphism $\nu\: C_5\to\Delta$
with $\nu(t)=a^2$. One can prove that the induced homomorphism
$$
\bbz_2 \approx H^{2j}(\bbz_2;\wh(C_5)) \fl{\nu_{2j}} H^{2j}(\bbz_2;\wh(\Delta))
$$
is injective \cite[Chapter~5]{HauTh}. By naturality of the Rothenberg exact sequence and Example~(1) above,
this produces \as{3} homology spheres in dimension $r=2j\geq 6$ which are not h-cobordant to a twisted double.
Using (1) and (2) above, we can get such homology spheres with fundamental group a free product of finitely many copies of $\Delta$.

(4) Suppose that $Y$ is the lens space $L(p,q)$ with $p\equiv 3\ {\rm mod\,}4$, or $(p,q)=(5,2)$
and take for $\eta$ the trivial bundle.
Then, any ($Y,\eta$)-referred \as{k} closed manifold $M$
of dimension $r=2j+1\geq 2k\geq 6$ is a twisted double. Indeed, in the decomposition of \proref{PdecAsm},
the h-cobordism $(V,\bd N,\bd N^*)$ is inertial, i.e. $\bd N^*$ is diffeomorphic to $\bd N$. But, there is 
no non-trivial inertial h-cobordism starting from $\bd N = Y\times S^{2j}$  under our hypotheses
\cite[Theorem~6.1 and Lemma~6.5]{Hau2}. Hence $M$ is a twisted double by \remref{RTdoub}.
\end{Examples}


\footnotesize

\sk{5}\noindent
\begin{minipage}[t]{6cm}
Jean-Claude HAUSMANN\\
Department of Mathematics\\
University of Geneva\\
CH-1211 Geneva 4,
Switzerland\\
Jean-Claude.Hausmann{@}unige.ch
\end{minipage}

\end{document}